\documentclass[reqno]{amsart}

\usepackage{hyperref}
\usepackage{amsmath}
\usepackage{amssymb}
\usepackage{amsfonts}
\usepackage{amsthm}
\usepackage{latexsym}
\usepackage{esint}
\usepackage{mathtools}
\usepackage{mathrsfs}
\usepackage{graphicx}
\usepackage{subcaption}
\usepackage{wrapfig}
\usepackage{bbm}
\usepackage{color}
\usepackage{bm}
\usepackage{float}

\usepackage{tikz}
\usepackage{tikz-3dplot}

\usepackage{tikz-cd}
\usetikzlibrary{lindenmayersystems}
\usetikzlibrary{decorations.pathreplacing}

\usepackage[top=1in, bottom=1.25in, left=1.10in, right=1.10in]{geometry}
\usepackage{todonotes}
\usepackage{stackengine}
\allowdisplaybreaks

\newcommand{\test}{{\mathscr F^\eta}}

\newcommand{\weakto}{\rightharpoonup}

\newtheorem{theorem}{Theorem}[section]
\newtheorem{lemma}[theorem]{Lemma}
\newtheorem{proposition}[theorem]{Proposition}
\newtheorem{corollary}[theorem]{Corollary}

\theoremstyle{definition}
\newtheorem{definition}[theorem]{Definition}

\theoremstyle{remark}
\newtheorem{remark}[theorem]{Remark}

\numberwithin{equation}{section}

\newcommand{\bx}{\mathbf{x}}
\newcommand{\by}{{y}}

\newcommand{\bu}{\mathbf{u}}
\newcommand{\bfu}{\mathbf{u}}
\newcommand{\bfn}{\mathbf{n}}

\newcommand{\bfw}{\mathbf{w}}
\newcommand{\bff}{\mathbf{f}}

\newcommand{\bv}{\mathbf{v}}
\newcommand{\bn}{\mathbf{n}}

\newcommand{\bfpsi}{\boldsymbol{\psi}}
\newcommand{\bfPsi}{\boldsymbol{\Psi}}
\newcommand{\bfphi}{\boldsymbol{\phi}}
\newcommand{\bfvarphi}{\boldsymbol{\varphi}}

\newcommand{\dy}{\, \mathrm{d}y }
\newcommand{\dd}{\,\mathrm{d}}

\newcommand{\dx}{\, \mathrm{d} \mathbf{x}}

\newcommand{\dt}{\, \mathrm{d}t}
\newcommand{\ds}{\, \mathrm{d}s}

\newcommand{\dxt}{\,\mathrm{d}\mathbf{x}\, \mathrm{d}t}

\newcommand{\Div}{\mathrm{div}_{\mathbf{x}}}
\newcommand{\divx}{\mathrm{div}_{\mathbf{x}}}

\newcommand{\nabx}{\nabla_{\mathbf{x}}}
\newcommand{\naby}{\partial_{y}}

\newcommand{\delx}{\Delta_{\mathbf{x}}}

\newcommand{\R}{\mathbb{R}}

\newcommand{\Oeta}{\mathcal{O}_{\eta}}

  \providecommand{\XXint}[3]{{\setbox0=\hbox{$#1{#2#3}{\int}$}
      \vcenter{\hbox{$#2#3$}}\kern-.5\wd0}}

\begin{document}

\title{Strong transport noise prevents self-intersection in 2D fluid-structure interaction.}




\author{Dominic Breit}
\address{Faculty of Mathematics, University of Duisburg-Essen, Thea-Leymann-Strasse 9, 45127 Essen,
Germany}
\email{dominic.breit@uni-due.de}

\author{Prince Romeo Mensah}
\address{African Institute for Mathematical Sciences, 1 Shoppers Street, Accra, Ghana}
\email{romeo@aims.edu.gh}
\subjclass[2010]{35R60; 60H15; 74B20; 74E35}

\date{\today}

\begin{abstract}

We consider the interaction of a viscous incompressible fluid with a flexible elastic shell occupying the boundary of the fluid's domain in two space dimensions. The shell is modelled by a perfectly elastic linearised beam equation. It is an open problem in fluid-structure interaction, even for the idealised case of a perfectly elastic plate, if a self-intersection of the structure can be excluded in the long run.
We show that by adding a physically well-motived
and sufficiently strong transport noise in the structure equation, the self-intersection does not occur before any given end-time with any prescribed probability smaller than 1.  
\end{abstract}

\keywords{Koiter shell, transport noise, no-contact, fluid-structure interaction }

\maketitle

\section{Introduction}
We consider the interaction of a viscous incompressible fluid in two dimensions with a flexible structure that is located at the boundary of the fluid's domain. This leads to the fluid's domain moving in time in accordance with the evolution of the structure. The fluid is described by the Navier--Stokes equations, while the structure displacement
satisfies a (visco-)elastic beam equation
\begin{align}\label{eq:1}
\varrho_s\partial_t^2\eta-\gamma\partial_t\partial_y^2\eta+\alpha\partial_y^4\eta=g-( \mathbb{S}\circ \bm{\varphi}_\eta\bn_\eta )\cdot\bn \,|\naby\bm{\varphi}_\eta|.
\end{align}
The fully coupled system is given in \eqref{detNSF} below following an introduction of the necessary mathematical framework. 
In \eqref{eq:1}, $\mathbb S$ is the stress tensor of the fluid, $\bfn$ is the outward unit normal of the reference domain $\mathcal O$, $\bfn_\eta$ is the outward unit normal of the moving domain $\mathcal O_\eta$, and $g$ is a given force. The function $\bfvarphi_\eta$ gives a coordinate transform between the reference and moving domain, see Figure \ref{fig:2}. The parameters $\varrho_s,\alpha>0$ will be put to 1 for simplicity of the presentation. 
We also follow the usual convention of neglecting the linear second order differential operator that complements the leading term  $\alpha\partial_y^4\eta$.
The parameter $\gamma\geq0$ has a crucial role
 as it distinguishes between the following two cases.
\begin{itemize}
\item \textbf{Visco-elastic solids.} The structure equation contains the dissipative term 
$-\gamma\partial_t\partial_y^2\eta$ relating to the viscosity of the shell. 
The solid phase is thus parabolic.
\item \textbf{Perfectly elastic solids.} There is no dissipative term in the equation for the solid, i.e. $\gamma=0$. The resulting solid phase is then hyperbolic and this results in regularity incompatibilities with the parabolic fluid phase.
\end{itemize}
As in \cite{BMSS} we distinguish two cases of elastic structures at the boundary visualised in Figure \ref{fig:1}:
\begin{itemize}
\item \textbf{Elastic plates.} The reference geometry for the fluid domain is flat and the elastic plate is located on top of a rectangle. In this context, the transformation map between the reference geometry and the moving domain is particularly simple.
\item \textbf{Elastic shells.} In the case of a general reference geometry, where the structure occupies
(a part of) the curved boundary, we speak of an 
elastic shell. Here, the transformation map is significantly more complicated than for plates.
\end{itemize}
\begin{figure} 
\begin{center}
\begin{tikzpicture}[scale=2]
  \begin{scope}  
     \draw [thick, <->] (0.5,0.5) -- (0.5,-0.5) -- (2.8,-0.5);
        \node at (0.5,0.6) {$\R$};
       \node at (3.1,-0.5) {$\R$};
       \draw [blue,dashed] (0.5,0.2) -- (2.5,0.2);
                \draw (2.5,0.2) -- (2.5,-0.5);
                \draw [red] (0.5,0.2) .. controls (0.7,-0.1) and (1,0.8) .. (1.3,0.1);
                \draw [red] (1.3,0.1) .. controls (1.5,-0.1) and  (1.7,0.4) .. (2.5,0.2);
                        \node at (1.4,-0.25) {\textcolor{blue}{$\mathcal{O}$} $\rightsquigarrow$ \textcolor{red}{$\mathcal{O}_\eta$}};
                         \node [red] at (1.5,0.4) {$\eta$};
        \draw [thick, <->] (4.5,0.5) -- (4.5,-0.5) -- (6.8,-0.5);
        \node at (4.5,0.6) {$\R$};
       \node at (7.1,-0.5) {$\R$};
                     \draw (4.5,-0.5) .. controls (4,0) and (4.5,0.2) .. (4.5,0.2);
       \draw (6.5,0) .. controls (6.8,-0.2) and (6.5,-0.9) .. (4.5,-0.5);
        \draw [blue,dashed] (4.5,0.2) .. controls (4.7,0.3) and (4.7,0.3) .. (4.7,0.3);
        \draw [blue,dashed] (4.7,0.3) .. controls (4.8,0.4) and (4.9,0.3) .. (5.1,0.2);
       \draw [blue,dashed] (5.1,0.2) .. controls (5.3,-0.3) and (5.5,0.3) .. (6.5,0);
                \draw [red] (4.5,0.2) .. controls (4.7,-0.1) and (5,0.8) .. (5.3,0.1);
                \draw [red] (5.3,0.1) .. controls (5.5,-0.1) and  (5.7,0.4) .. (6.5,0);
                        \node at (5.4,-0.25) {\textcolor{blue}{$\mathcal{O}$} $\rightsquigarrow$ \textcolor{red}{$\mathcal{O}_\eta$}};
                         \node [red] at (5.5,0.4) {$\eta$};
  \end{scope}
\end{tikzpicture}
\caption{Domain transformation: elastic plates (left) and shells (right), cf.~\cite[fig. 1]{BMSS}}\label{fig:1}
\end{center}
\end{figure}
Recently, there has been  remarkable progress on the well-posedness theory for the fluid-structure problem governed by \eqref{eq:1}.
The first result regarding visco-elastic plates can be found in \cite{MR3466847}.
Eventually, this result was extended by the first author in \cite{breit2022regularity} to visco-elastic beams (i.e., a one-dimensional visco-elastic shell).
Both papers depend heavily on the dissipation in the structure. Finally, the case of perfectly elastic plates based on a new acceleration estimate was established in \cite{ScSu}.
Our recent paper \cite{BMSS} closed the theory
by establishing the well-posedness for the interaction of an incompressible fluid with a perfectly elastic shell. 
\begin{figure} 
\begin{center}
\begin{tikzpicture}[scale=2]
  \begin{scope}     \draw [thick, <->] (0.5,0.5) -- (0.5,-0.5) -- (2.8,-0.5);
        \node at (0.5,0.6) {$\R$};
        \node at (1.5,-0.25) {$\mathcal{O}$};
       \node at (3.1,-0.5) {$\R$};
        \draw [blue] (0.5,0.2) -- (2.5,0.2);
                \draw (2.5,0.2) -- (2.5,-0.5);
                   \node at (3.5,0.2) {$\bfvarphi_\eta$};
        \draw [thick, <->] (4.5,0.5) -- (4.5,-0.5) -- (6.8,-0.5);
        \node at (4.5,0.6) {$\R$};
       \node at (7.1,-0.5) {$\R$};
       \draw [blue,dashed] (4.5,0.2) -- (6.5,0.2);
                \draw (6.5,0.2) -- (6.5,-0.5);
                \draw [red] (4.5,0.2) .. controls (4.7,-0.1) and (5,0.8) .. (5.3,0.1);
                \draw [red] (5.3,0.1) .. controls (5.5,-0.1) and  (5.7,-1.2) .. (6.5,0.2);
                        \node at (5.4,-0.25) {$\mathcal{O}_\eta$};
                         \node [red] at (5.5,0.4) {$\eta$};
          \draw [<-,dashed] (4,-0.2) to [out=150,in=30] (3,-0.2);
  \end{scope}
\end{tikzpicture}
\caption{Interaction with contact, cf.~\cite[fig. 2]{BrRo}.}\label{fig:2}
\end{center}
\end{figure} 

In addition to well-posedness, a second fundamental question that is of great importance in fluid-structure interaction problems is whether or not the structure self-intersects, see figure \ref{fig:2}. In many real-world applications, for instance the flow of  blood through vessels, such an intersection is excluded by natural mechanisms. It is, however, very difficult to rigorously verify this mathematically. The only available result is given in \cite{MR3466847} for damped plates. It is proved that the elastic wall cannot touch the bottom of the fluid cavity in finite time. Again this result depends  heavily on the plate's viscosity.
Moreover, it is even unclear if a similar theory is possible for visco-elastic shells.
In this paper we are going to shed new light on this question by way of probability. We consider the most difficult case of a perfectly elastic shell.

\subsection{Stochastic fluid-structure interaction} 
Stochastic perturbations in PDEs do well to account for random effects in real-life problems and uncertainty in a data. In the context of fluid-structure interaction problems, incorporating stochasticity into structure
models can be particularly useful for analysing stochastic buckling \cite{papadopoulos2009buckling, reuter2025probabilistic, schafer2006stochastic} caused by geometric  imperfections,   material variability, random loads and many more. With that said, it is also important how said stochasticity is incorporated in fluid-structure interaction models. Having  highlighted the importance of stochasticity in fluid-structure interaction problems in \cite{KuCa1}, a first step towards a well-posedness theory for such an interaction problem is presented in \cite{KuCa2}. Here, the fluid is modelled by the two-dimensional Stokes equations while the structure is the  one-dimensional wave equation. This simplified model has since been extended to the full Navier--Stokes equation by the same authors and their collaborators in  \cite{KuCa1, KuCa2, KuTa1, Ta1, KuCa3}. A different direction is considered in \cite{BrMeMo, MeNg} where stochasticity is not taken  as an external force but as an intrinsic property of the system by way of a transport noise.  Transport noise  has the very appealing feature of being energy conservative. Thus, if the initial data is deterministic (or simply bounded in probability) we have a pathwise control over the energy. Transport noise also has a clear physical meaning and can be derived rigorously from a stochastic variational principle  \cite{Holm3} as shown in \cite{MeNg} by the second author and his collaborator. Additionally, it has been observed to have regularising effects on certain ill-posed partial differential equations \cite{flandoli2021scaling, flandoli2020convergence,Fl3, galeati2020convergence}.  With these benefits in mind, the well-posedness theory for stochastic fluid-structure interaction with transport noise has been initiated in  \cite{BrMeMo} where martingale solutions for the fluid-structure problem with structure equation
\begin{align}\label{eq:sfsi}\dd \partial_t \eta + \naby^4 \eta\dt
=\big(g-( \mathbb{S}\circ \bm{\varphi}_\eta\bn_\eta )\cdot\bn \,|\naby\bm{\varphi}_\eta|\big)\dt+
\sqrt{\gamma}
\sum_{k\in\mathbb{Z}} ( {\sigma}_k \naby\partial_t\eta)\circ\dd W_t^k
\end{align}
 are constructed. 
The precise understanding of the equation  is detailed in Section \ref{sec:strat} below. In this paper we continue studying the effect of such a noise on the dynamics.

\subsection{The main result}
Our main result shows that by adding a physically well-motived and sufficiently strong transport noise in the structure equation, the self-intersection in the stochastic fluid-structure interaction problem does not occur before any given end-time with any prescribed probability smaller than 1.  
In a simplified form, it is given as follows (see Theorem \ref{thm:main} for the precise formulation).
\begin{theorem}
Let $\kappa\in(0,1)$, and $T>0$ be given. Then there are $\gamma>0$ and a sequence $(\sigma_k)_{k\in\mathbb Z}$ with finite support such that for any weak martingale solution $((\Omega,\mathfrak{F},(\mathfrak{F}_t)_{t\geq0},\mathbb{P}),\eta,\bu,(W^k)_{k\in\mathbb Z})$ of the fluid-structure interaction problem with structure equation given by \eqref{eq:sfsi}
there is no contact until the time $T$ with probability $1-\kappa$.
\end{theorem}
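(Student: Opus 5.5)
Our plan is to exploit the Itô–Stratonovich correction of the transport noise. It produces a dissipative term of the form $-\gamma\,\partial_y(\sigma^2\partial_y\partial_t\eta)$, with $\sigma^2:=\sum_k\sigma_k^2$, which is what the visco-elastic case $-\gamma\partial_t\partial_y^2\eta$ supplies deterministically. The Itô martingale part is energy-neutral. First we rewrite \eqref{eq:sfsi} in Itô form:
\begin{align*}
\dd\partial_t\eta+\naby^4\eta\dt=\Big(g-(\mathbb S\circ\bm\varphi_\eta\bn_\eta)\cdot\bn\,|\naby\bm\varphi_\eta|+\tfrac{\gamma}{2}\sum_k\sigma_k\naby(\sigma_k\naby\partial_t\eta)\Big)\dt+\sqrt\gamma\sum_k\sigma_k\naby\partial_t\eta\,\dd W^k_t .
\end{align*}
We choose the finitely supported family $(\sigma_k)$ so that $\sum_k\sigma_k^2\equiv1$ on the torus (or so that it is bounded below by a positive constant); a finite trigonometric family such as $\sigma_k\in\{\cos(ky),\sin(ky)\}$ with suitable weights achieves this. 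The drift then contains $\frac{\gamma}{2}\naby^2\partial_t\eta$ plus lower-order terms in $\naby\partial_t\eta$, which are controlled if the $\sigma_k$ are smooth. Applying Itô's formula to the coupled energy $\frac12\|\partial_t\eta\|^2_{L^2}+\frac12\|\naby^2\eta\|^2_{L^2}+\frac12\|\bu\|_{L^2(\Oeta)}^2$, the Itô correction $\frac{\gamma}{2}\sum_k\|\sigma_k\naby\partial_t\eta\|^2$ cancels with the drift, and the martingale term $\int\sigma_k\naby\partial_t\eta\,\partial_t\eta\,\dd y=-\frac12\int\sigma_k'(\partial_t\eta)^2\dd y$ is harmless. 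This gives a pathwise energy bound, uniform in $\gamma$, for deterministic data. We expect the damping to be the source of a $\gamma$-dependent dissipative gain of $\naby\partial_t\eta$ after passing to the higher energy level.

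Next we use the structure of the deterministic damped-plate no-contact argument of \cite{MR3466847}, adapted as in \cite{breit2022regularity}. Contact means that $\inf_y(\text{distance of }\bm\varphi_\eta\text{ to the opposite boundary or to itself})\to0$, and this requires $\eta$ to move a distance of order $\delta_0>0$, where $\delta_0$ is determined by the initial geometry. We control the modulus of continuity in time of $\eta$ in $C^1$. The energy bound gives $\eta\in L^\infty(H^2)$, hence $\eta\in L^\infty C^{1,1/2}$. With the additional dissipation we aim for
\begin{align*}
\mathbb E\Big[\int_0^T\|\naby\partial_t\eta\|_{L^2}^2\dt\Big]\lesssim\frac{C(\text{data},T)}{\gamma}.
\end{align*}
To get this, we test the Itô equation by $\partial_t\eta$ and keep the dissipation: the Itô correction only partially cancels the drift if we insert a weight, so we would use a slightly higher-order or weighted functional, or a second $\sigma$-family. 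Then $\|\eta(t)-\eta_0\|_{H^1}\le \sqrt T\,\|\naby\partial_t\eta\|_{L^2_tL^2}$. Interpolating with the $H^2$ bound gives $\sup_t\|\eta(t)-\eta_0\|_{C^1}$ small in probability. By Chebyshev's inequality, $\mathbb P(\sup_{t\le T}\|\eta-\eta_0\|_{W^{1,\infty}}>\delta_0)\le C/(\gamma\delta_0^{2})<\kappa$ once $\gamma$ is large. On the complement, the structure stays in a tubular neighbourhood of the non-self-intersecting initial configuration, so no contact occurs before $T$.

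The main obstacle is obtaining genuine dissipation from the noise. In the plain energy balance the Stratonovich correction exactly cancels the Itô quadratic variation, so no net damping appears. We would instead estimate the fluctuation of $\eta$ directly: on the set where the energy is bounded, the transport noise acts like a fast random rearrangement whose averaged effect is the heat semigroup $e^{\frac{\gamma}{2}t\naby^2}$ acting on $\partial_t\eta$. The plan is to write $\partial_t\eta$ through a mild or stochastic-flow representation, so that strong noise quickly damps the velocity. Failing that, we would use a scaling-limit argument in the spirit of \cite{flandoli2021scaling}, with finitely many modes of large intensity, showing that the solution is close in probability to the damped deterministic problem, for which no-contact is known. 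Controlling the fluid–structure coupling term uniformly in $\gamma$ in this step is the delicate point.
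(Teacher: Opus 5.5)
Your main line hinges on the bound $\mathbb{E}\int_0^T\|\partial_y\partial_t\eta\|_{L^2(\Gamma)}^2\dt\lesssim C/\gamma$ for the \emph{stochastic} problem, and this bound is not available. The Stratonovich transport noise is energy-conservative: in It\^o's formula the corrector $\gamma\partial_y^2\partial_t\eta$ is cancelled exactly by the quadratic variation of the martingale. So the only a priori information is \eqref{energyGal}, which contains no $\gamma$-dissipation; the paper stresses exactly this point. Weighted or higher-order functionals, or a second family of $\sigma_k$, cannot change this, because the noise merely rearranges $\partial_t\eta$. For instance, with spatially constant coefficients it acts as a random translation $y\mapsto y+\sqrt{\gamma}\,\beta_t$, which preserves every Sobolev norm pathwise. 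Likewise, a mild or stochastic-flow representation produces the heat semigroup only after taking expectations, i.e.\ in law, not pathwise. So your Chebyshev step has nothing to feed on, and taking $\gamma$ large with a fixed finite family $(\sigma_k)$ only makes the noise stronger.

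The argument that works is the one you list as a last resort, but as stated it lacks the ingredients that make it go through.

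First, the noise must be the high-mode family \eqref{ansatz}, supported on $N\le|k|\le 2N$ with weights $1/|k|$ normalised so that $\sum_k|\sigma_k|^2=1$. Then the corrector equals $\gamma\partial_y^2\partial_t\eta$ for every $N$, while by Burkholder--Davis--Gundy the stochastic integral tested against a smooth function is $O(N^{-1/2})$. The limit parameter is $N\to\infty$ with $\gamma$ fixed, not ``finitely many modes of large intensity'', and no uniformity in $\gamma$ is needed.

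Second, the order of choices matters. Fix $\gamma$ so large that the solution $\overline\eta$ of the damped deterministic problem \eqref{detNSF} satisfies $\|\overline\eta\|_{L^\infty(I\times\Gamma)}\le\frac{L+\alpha}{2}$, where $\|\eta_0\|_{L^\infty(\Gamma)}\le\alpha<L$. This follows from its energy inequality $\gamma\int_I\|\partial_t\partial_y\overline\eta\|_{L^2(\Gamma)}^2\dt\le C(\text{data})$ together with Agmon's inequality; no no-contact theorem for the damped problem is needed or available. Only then take $N=N(\kappa)$ large.

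Third, and this is the real difficulty: stochastic compactness only yields convergence in law on a new probability space. You need $\eta^N\to\overline\eta$ in probability in $L^\infty(I\times\Gamma)$ on the original space. That requires the Gy\"ongy--Krylov criterion, hence uniqueness of the limit, which is not known for weak solutions of the fluid--structure system. The paper obtains it as follows:
\begin{itemize}
\item it assumes the regular data \eqref{dataset'};
\item it uses the global strong solution of Theorem~\ref{thm:Br} and the weak--strong uniqueness of \cite{BMSS35};
\item since weak--strong uniqueness needs the limit to satisfy the energy inequality including the $\gamma$-dissipation, it proves this by showing $\partial_t\overline\eta\in L^2(I;W^{1,2}(\Gamma))$ via Steklov averages and $\overline\eta\in L^2(I;W^{5/2,2}(\Gamma))$, so that the solution itself is an admissible test function.
\end{itemize}
None of this appears in your plan.
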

The construction of the coefficients $(\sigma_k)_{k\in\mathbb Z}$ is motivated from the analysis in \cite{Fl3}, where a similar noise sequence creates strong dissipation in the scaling limit. As in  \cite{Fl3}, the effect in the deterministic target problem is rather obvious. The difficulty is to transfer some information to the primitive problem, for which no additional estimate becomes available by the noise (as being energy conservative). This is a regularisation by noise effect to which we give some context in the following.

\subsection{Regularisation by noise: contextualisation}
The possibility to regularise simple finite dimensional stochastic evolution equations has been known for a long time. While certain linear PDEs could be considered earlier, a major step was the regularisation of the transport equation in \cite{Fl0}. Finally, in \cite{Fl3}, it has been shown that a carefully chosen transport noise can delay the vorticity blow-up in three-dimensional Navier--Stokes equations (see also \cite{agresti1}  for further progress in this regard).
In \cite{Fl3}  transport noise is added to the vorticity equation, while transport noise in the momentum equation seems to be more adequate. This gap was closed very recently in
\cite{agresti2} where a physically well-motived noise is considered.

The construction of our noise is borrowed from \cite{Fl3}.
However, our problem is much simpler since the regularisation effect only concerns the linear one-dimensional structure equation. The scaling limit for the uncoupled structure equation  (see \cite{MeNg}) is rather straightforward. The main difficulty is the procedure for the full problem with nonlinear coupling and a fluid domain which is moving in time.
To some extent, we benefit from our recent  works on the deterministic target problem:
\begin{itemize}
\item In \cite{breit2022regularity} the first author showed the existence of a unique global-in-time strong solution.
\item In \cite{BMSS35} we showed a weak-strong uniqueness principle. The latter is conditioned on the Lipschitz continuity of the structure function which is unconditionally true in the two-dimensional case.
\end{itemize}
Both points  are crucial for the Gy\H{o}ngy-Krylov argument in Section \ref{sec:cip} to conclude on uniqueness of weak solutions with regular initial data. Interestingly, the uniqueness of weak solutions to the deterministic target system with initial data in the energy space is not known and probably not even expected. Comparing two distinct solutions is a sophisticated task as both velocity fields are a priori defined on different moving domains.

We also benefit from our previous analysis in \cite{BrMeMo}, where martingale solutions are constructed. However, several arguments from \cite{BrMeMo} must be strengthened. In particular, we improve the fractional estimate for the structure function
(which has its origin in \cite{MuSc}), see \eqref{eq:13.05B}. Although we only improve the differentiability by an instant (in the Besov scale, it only concerns the fine index), this improvement is crucial to use the solution as a test-function. Different from the two-dimensional Navier--Stokes equations in a fixed domain, this step is highly non-trivial because of the moving boundary and its limited regularity. Most of the analysis from Section \ref{sec:pcon}
 is dedicated to this step.

\section{Deterministic framework}
\subsection{Conventions}
For simplicity, we set all physical constants (except for $\gamma>0$) to 1. The analysis is not affected as long as they are strictly positive.
For two non-negative quantities $f$ and $g$, we write $f\lesssim g$  if there is a constant $c>0$ such that $f\leq\,c g$. Here $c$ is a generic constant which does not depend on the crucial quantities. If necessary, we specify particular dependencies. We write $f\approx g$ if both $f\lesssim g$ and $g\lesssim f$ hold. In the notation for function spaces (see next subsection), we do not distinguish between scalar- and vector-valued functions. However, vector-valued functions will usually be denoted in bold case. We shorten the time interval $(0,T)$ by $I$. We identify $\Gamma$ with the one-dimensional torus and thus consider the structure equation supplemented with periodic boundary conditions.

\subsection{Classical function spaces}
Let $\mathcal O\subset\R^2$ be open.
The function spaces of continuous or $\alpha$-H\"older-continuous functions, $\alpha\in(0,1)$,
 are denoted by $C(\overline{\mathcal O})$ or $C^{0,\alpha}(\overline{\mathcal O})$ respectively, where $\overline{\mathcal O}$ is the closure of $\mathcal O$. Similarly, we write $C^1(\overline{\mathcal O})$ and $C^{1,\alpha}(\overline{\mathcal O})$ if the functions are differentiable.
We denote  by $L^p(\mathcal O)$ and $W^{k,p}(\mathcal O)$ for $p\in[1,\infty]$ and $k\in\mathbb N$, the usual Lebesgue and Sobolev spaces over $\mathcal O$. 
 We denote by $W^{k,p}_0(\mathcal O)$, the closure of the smooth and compactly supported functions in $W^{k,p}(\mathcal O)$. If $\partial\mathcal O$ is regular enough, this coincides with the functions vanishing $\mathcal H^1$ -a.e. on $\partial\mathcal O$ with the one-dimensional Hausdorff measure $\mathcal H^1$. 
 We also denote by $W^{-k,p'}(\mathcal O)$ the dual of $W^{k,p}_0(\mathcal O)$ for $p,p'\in[1,\infty)$, with $\frac{1}{p}+\frac{1}{p'}=1$.
  Finally, we consider subspaces
$W^{1,p}_{\Div}(\mathcal O)$ and $W^{1,p}_{0,\Div}(\mathcal O)$ of divergence-free vector fields which are defined accordingly. The space $L^p_{\Div}(\mathcal O)$ is defined as the closure of the set of smooth and compactly supported solenoidal functions in $L^p(\mathcal O).$ 

For $p\in[1,\infty)$, the fractional Sobolev space (Sobolev-Slobodeckij space) with differentiability $s>0$ with $s\notin\mathbb N$ will be denoted by $W^{s,p}(\mathcal O)$. 
 We denote by $W^{s,p}_0(\mathcal O)$, the closure of the smooth and compactly supported functions in $W^{s,p}(\mathcal O)$. For $s>\frac{1}{p}$ this coincides with the functions vanishing $\mathcal H^{1}$ -a.e. on $\partial\mathcal O$ provided that $\partial\mathcal O$ is regular enough. We also denote by $W^{-s,p'}(\mathcal O)$, for $s>0$ and $p,p'\in[1,\infty)$, with $\frac{1}{p}+\frac{1}{p'}=1$, the dual of $W^{s,p}_0(\mathcal O)$.

Fractional derivatives on $\Gamma$ can be characterised with the help 
of the discrete Fourier coefficients
\begin{align*}
c_{ k}(f)=\int_\Gamma f(y)
e^{-\mathfrak i k \cdot y}\dd y,\quad k\in\mathbb Z.
\end{align*}
One can then define
\begin{align*}
H^{s,p}(\Gamma)&:=\big\{f\in \mathscr S'(\Gamma):\,\,\|f\|_{H^{s,p}(\Gamma)}<\infty\big\},\\
\|f\|_{H^{s,p}(\Gamma)}^p&:=\sum_{ k\in\mathbb Z }(1+| k|^2)^{ps/2}|c_{k}(f)|^p.
\end{align*}
In the above $\mathscr S'(\Gamma)$ denotes the dual
of the Schwarz class $\mathscr S(\Gamma)$. For $p=2$ the space $H^{s,p}(\Gamma)$ coincides with the corresponding Sobolev-Slobodeckij space $W^{s,p}(\Gamma)$ defined via double integrals.

For a separable Banach space $(X,\|\cdot\|_X)$, we denote by $L^p(I;X)$, the set of (Bochner-) measurable functions $u:I\rightarrow X$ such that the mapping $t\mapsto \|u(t)\|_{X}$ belongs to $L^p(I)$. 
The set $C(\overline{I};X)$ denotes the space of functions $u:\overline{I}\rightarrow X$ which are continuous with respect to the norm topology on $(X,\|\cdot\|_X)$, while $C_w(\overline{I};X)$ denotes continuity with respect to the weak topology. Similarly, $C_{w^*}(\overline{I};X)$ denotes continuity with respect to the weak* topology.
 For $\alpha\in(0,1]$ we write
$C^{0,\alpha}(\overline{I};X)$ for the space of H\"older-continuous functions with values in $X$. The space $W^{1,p}(I;X)$ consists of those functions from $L^p(I;X)$ for which the distributional time derivative belongs to $L^p(I;X)$ as well. The space $W^{k,p}(I;X)$ is defined accordingly.

\subsection{Geometric setup}
\label{sec:geometry}
Following \cite{breitSchw2018compressible,lengeler2014weak,MuSc}, we present the geometric set-up for moving domains.
 The spatial reference domain $\mathcal O$ is assumed to be an open bounded subset of $\mathbb{R}^2$ with smooth boundary $\partial\mathcal O$ and an outer unit normal ${\bn}$. We also assume that
 $\partial\mathcal O$ can be parametrised by an injective mapping $\bm{\varphi}\in C^k(\Gamma;\R^2)$ for some sufficiently large $k\in\mathbb N$, where $\Gamma$ is the one-dimensional torus. Next, we suppose for all $\by\in \Gamma$ that  $\partial_\by\bm{\varphi}\neq\bm{0}$. Now,
 for a point $\bx$ in the neighbourhood
of $\partial\mathcal O$, we define the functions $\by$ and $s$ by  
\begin{align*}
 \by(\bx)=\arg\min_{\by\in\Gamma}|\bx-\bm{\varphi}(\by)|,\quad s(\bx)=(\bx-\mathbf p(\bx))\cdot\bn(\by(\bx)),
 \end{align*}
where we used the projection $\mathbf p(\bx)=\bm{\varphi}(\by(\bx))$. We define $L>0$ to be the largest number such that $s,\by$ and $\mathbf p$ are well-defined on $S_L$, where
\begin{align}\label{eq:boundary1}
S_L=\{\bx\in\R^2:\,\mathrm{dist}(\bx,\partial\mathcal O)<L\}
\end{align}
is a tubular neighbourhood of the boundary.\footnote{Note carefully, that this is not necessarily a small number. For instance, if $\mathcal O$ is a ball then $L$ is its radius.} Note that
due to the smoothness of $\partial\mathcal O$, for $L$ small enough, we have $|s(\bx)|=\min_{\by\in\Gamma}|\bx-\bm{\varphi}(\by)|$ for all $\bx\in S_L$. 
Now, for a given function $\eta : I \times \Gamma \rightarrow\R$ we parametrise the deformed boundary by
\begin{align}\label{eq:bfvarphi}
{\bm{\varphi}}_\eta(t,\by)={\bm{\varphi}}(\by) + \eta(t,\by){\bn}(\by), \quad \,\by \in \Gamma,\,t\in I.
\end{align}
With an abuse of notation we define the deformed space-time cylinder as $I\times\mathcal O_\eta=\bigcup_{t\in I}\{t\}\times\mathcal O_{\eta(t)}\subset\R^{3}$.
The corresponding function spaces for variable domains are defined as follows.
\begin{definition}{(Function spaces)}
For $I=(0,T)$, $T>0$, and $\eta\in C(\overline{I}\times\Gamma)$ with $\|\eta\|_{L^\infty(I\times\Gamma)}< L$ we define for $1\leq p,r\leq\infty$
\begin{align*}
L^p(I;L^r(\mathcal O_\eta))&:=\big\{v\in L^1(I\times\mathcal O_\eta):\,\,v(t,\cdot)\in L^r(\mathcal O_{\eta(t)})\,\,\text{for a.e. }t,\,\,\|v(t,\cdot)\|_{L^r(\mathcal O_{\eta(t)})}\in L^p(I)\big\},\\
L^p(I;W^{1,r}(\mathcal O_\eta))&:=\big\{v\in L^p(I;L^r(\mathcal O_\eta)):\,\,\nabx v\in L^p(I;L^r(\mathcal O_\eta))\big\}.
\end{align*}
\end{definition}
\noindent 
To establish a relationship between the 
fixed domain and the time-dependent domain, we introduce the Hanzawa transform $\bm{\Psi}_\eta : \mathcal O \rightarrow\mathcal O_\eta$ defined by
\begin{equation}
\label{map}
\bm{\Psi}_\eta(\bx)
=
 \left\{
  \begin{array}{lr}
    \mathbf{p}(\bx)+\big(s(\bx)+\eta(\by(\bx))\phi(s(\bx))\big)\bn(\by(\bx)) &\text{if dist}(\bx,\partial\mathcal O)<L,\\
    \bx &\text{elsewhere}
  \end{array}
\right.
\end{equation}
for any $\eta:\Gamma\rightarrow (-L,L)$. Here $\phi\in C^\infty(\mathbb R)$ is such that 
$\phi\equiv 0$ in a neighborhood of $-L$ and $\phi\equiv 1$ in a neighborhood of $0$. The other variables  $\mathbf{p}$, $s$ and $\bn$ are as defined earlier in this section.
A straightforward verification shows that the inverse of $\bm{\Psi}_{\eta(t)}$ is $\bm{\Psi}_{-\eta(t)}$.

We finish this section by recalling  the following Aubin--Lions-type lemma which is shown in~\cite[Theorem 5.1. \& Remark 5.2.]{MuSc} and slightly reformulated for our purposes.
\begin{theorem}
\label{thm:auba}
Let $X,Z$ be two Banach spaces, such that $X'\subset Z'$.
Assume that $f_N:I\to X$ and $g_N: I\to X'$, such that $g_N\in L^\infty(I;Z')$ uniformly. Moreover assume the following: 
\begin{enumerate}
\item[(a)] The {\em boundedness}: For some $\mathfrak s\in [1,\infty]$ we have that $(f_N)$ is bounded in $L^{\mathfrak s}(I;X)$ and $(g_N)$ in $L^{\mathfrak s'}(I;X')$.
\item[(b)] The {\em approximability-condition} is satisfied: For every $\kappa\in (0,1]$ there exists some $f_{N,\kappa}\in L^{\mathfrak s}(I;X)\cap L^1(I;Z)$,
%
such that for every $\epsilon\in (0,1)$ there exists some $\kappa_\epsilon\in(0,1)$ (depending only on $\epsilon$) such that 
\[
\|f_N-f_{N,\kappa}\|_{L^{\mathfrak s}(I;X)}\leq \epsilon\text{ for all } \kappa\in (0,\kappa_\epsilon]
\]
and for every $\kappa\in (0,1]$ there is some $C(\kappa)$ such that
\[
\|f_{N,\kappa}\|_{L^1(I;Z)}\dt\leq C(\kappa).
\]

\item[(c)] The {\em equi-continuity} of $g_N$: We require that there exists some $\alpha\in (0,1]$,  functions $A_N$ with $A_N\in L^1(I)$ uniformly, such that for every $\kappa>0$ that there exist some $C(\kappa)>0$ and some $N_\kappa\in \mathbb N$ such that for $\tau>0$, $N\geq N_\kappa$ and a.e.\ $t\in [0,T-\tau]$
\[
 \Big|\tau^{-1}\int_{0}^\tau\langle f_{N,\kappa}(t), g_N(t)-g_N(t+s)\rangle_{X,X'}\,\dd s\Big| \leq C(\kappa)\tau^\alpha(A_N(t)+1).
\]

\item[(d)] The {\em compactness assumption} is satisfied: $X'\hookrightarrow \hookrightarrow Z'$. More precisely, every uniformly bounded sequence in $X'$ has a strongly converging subsequence in $Z'$. 
\end{enumerate}
Then there is a subsequence, such that
\[
\int_I\langle f_N,g_N\rangle_{X,X'}\dt\rightarrow \int_I\langle f,g\rangle_{X,X'}\dt.
\]
\end{theorem}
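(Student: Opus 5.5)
The plan is an $\epsilon/3$-argument: approximate $f_N$ by $f_{N,\kappa}$ using (b), regularise $g_N$ in time using (c), and pass to the limit in the remaining doubly regularised term by compactness (d). Throughout, $f$ and $g$ denote the weak(-$*$) limits of subsequences of $(f_N)$ in $L^{\mathfrak s}(I;X)$ and of $(g_N)$ in $L^{\mathfrak s'}(I;X')$, which exist by (a).

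\emph{Step 1: splitting.} For fixed $\kappa$, I pass to a further (diagonal) subsequence so that $f_{N,\kappa}\rightharpoonup f_\kappa$ in $L^{\mathfrak s}(I;X)$. By weak lower semicontinuity, (b) gives $\|f-f_\kappa\|_{L^{\mathfrak s}(I;X)}\le\epsilon$ for $\kappa\le\kappa_\epsilon$. I then write
\[
\int_I\langle f_N,g_N\rangle\dt=\int_I\langle f_N-f_{N,\kappa},g_N\rangle\dt+\int_I\langle f_{N,\kappa},g_N-g_N^\tau\rangle\dt+\int_I\langle f_{N,\kappa},g_N^\tau\rangle\dt ,
\]
where $g_N^\tau(t)=\tau^{-1}\int_0^\tau g_N(t+s)\ds$ on $[0,T-\tau]$, and I extend $g_N^\tau$ by its value at $T-\tau$ near the endpoint. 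Hölder's inequality and (a), (b) bound the first term by $\epsilon\sup_N\|g_N\|_{L^{\mathfrak s'}(X')}$. On $[0,T-\tau]$, assumption (c) bounds the second term by $C(\kappa)\tau^\alpha\big(\sup_N\|A_N\|_{L^1(I)}+T\big)$ for $N\ge N_\kappa$. On the leftover strip $[T-\tau,T]$, the integrand is estimated by Hölder's inequality and vanishes as $\tau\to0$, uniformly in $N$. For the last point I use equi-integrability: if $\mathfrak s<\infty$, via the $L^{\mathfrak s}$-bound of $f_{N,\kappa}$ and $g_N\in L^\infty(I;Z')$. Otherwise I use $f_{N,\kappa}\in L^1(I;Z)$ and shrink the strip in $Z$--$Z'$ duality. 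If this is delicate, I would instead let $\kappa$-dependent constants absorb it.

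\emph{Step 2: compactness of the averaged term (main obstacle).} For fixed $\tau$ and $\kappa$, the family $(g_N^\tau)_N$ is bounded in $L^\infty(I;X')$, with bound of order $\tau^{-1/\mathfrak s'}\sup_N\|g_N\|_{L^{\mathfrak s'}(X')}$. It is also equicontinuous in time with values in $Z'$, because
\[
g_N^\tau(t+h)-g_N^\tau(t)=\tau^{-1}\Big(\int_{t+\tau}^{t+\tau+h}g_N-\int_t^{t+h}g_N\Big)
\]
and $g_N$ is uniformly bounded in $L^\infty(I;Z')$. By (d), for each $t$ the set $\{g_N^\tau(t)\}_N$ is relatively compact in $Z'$. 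Arzelà--Ascoli then yields $g_N^\tau\to g^\tau$ in $C([0,T];Z')$, and $g^\tau$ is the average of $g$. Consequently,
\[
\Big|\int_I\langle f_{N,\kappa},g_N^\tau-g^\tau\rangle\dt\Big|\le C(\kappa)\|g_N^\tau-g^\tau\|_{L^\infty(I;Z')}\to0 .
\]
Here I use the consistency of the $X$--$X'$ and $Z$--$Z'$ pairings on $X\cap Z$, which is guaranteed by $X'\subset Z'$. Since $g^\tau\in L^{\mathfrak s'}(I;X')$, the weak convergence $f_{N,\kappa}\rightharpoonup f_\kappa$ gives $\int_I\langle f_{N,\kappa},g^\tau\rangle\dt\to\int_I\langle f_\kappa,g^\tau\rangle\dt$. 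I regard this step as the crux: it is the only place where the hypotheses genuinely interact, and it requires checking the pointwise relative compactness and identifying the limit in the correct duality.

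\emph{Step 3: undoing the regularisations at the limit level.} By Fubini, $\int_I\langle f_\kappa,g^\tau\rangle\dt=\int_I\langle f_\kappa^{-\tau},g\rangle\dt$, up to boundary strips, where $f_\kappa^{-\tau}$ is the backward time average of $f_\kappa$. For $\mathfrak s<\infty$ this average converges strongly to $f_\kappa$ in $L^{\mathfrak s}(I;X)$ as $\tau\to0$. In the case $\mathfrak s=\infty$, I swap roles and use strong convergence of $g^\tau\to g$ in $L^{\mathfrak s'}(I;X')=L^1(I;X')$. Finally, $|\int_I\langle f_\kappa-f,g\rangle\dt|\le\epsilon\|g\|_{L^{\mathfrak s'}(X')}$.

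\emph{Conclusion.} Taking first $N\to\infty$, then $\tau\to0$, then $\kappa\to0$ (which makes $\epsilon\to0$) along the diagonal subsequence gives $\limsup_N\big|\int_I\langle f_N,g_N\rangle\dt-\int_I\langle f,g\rangle\dt\big|=0$, which is the claim.
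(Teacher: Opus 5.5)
The paper gives no proof of this statement; it imports it from \cite[Theorem 5.1 \& Remark 5.2]{MuSc}. Your three-term splitting is the natural route, and Steps 2 and 3 are sound. This presumes two things. First, the $X$--$X'$ and $Z$--$Z'$ pairings must be consistent. Second, $f,g$ must be weak limits: the statement tacitly presupposes this, and in general (e.g.\ for $\mathfrak s=1$ or non-reflexive $X$) it must be assumed rather than extracted from (a).

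The genuine gap is the strip $[T-\tau,T]$ in Step 1, where (c) gives no information. None of your remedies closes it. Boundedness of $f_{N,\kappa}$ in $L^{\mathfrak s}(I;X)$ or in $L^1(I;Z)$ gives no smallness on intervals of length $\tau$ uniformly in $N$; that would need uniform integrability near $t=T$. Pairing the $L^{\mathfrak s}(I;X)$-bound of $f_{N,\kappa}$ with the $L^\infty(I;Z')$-bound of $g_N$ is not a valid H\"older estimate at all. And a $\kappa$-dependent constant cannot absorb a term that does not tend to zero.

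In fact no argument can close it: with (c) imposed only for $t\in[0,T-\tau]$ the statement is false, because the mass of $\langle f_N,g_N\rangle$ may concentrate at $t=T$. Take $X=X'=\ell^2$, and let $Z'$ and $Z$ be $\ell^2$ with weights $j^{-1}$ and $j$ respectively. These are dual under the $\ell^2$ pairing and $X'\hookrightarrow\hookrightarrow Z'$. Take $\mathfrak s=2$ and
\[
f_N=g_N=f_{N,\kappa}:=\sqrt N\,\mathbb I_{[T-1/N,T]}\,e_N .
\]
Then $\|g_N(t)\|_{Z'}\le 1$ and $\|f_N\|_{L^2(I;X)}=\|g_N\|_{L^2(I;X')}=\|f_{N,\kappa}\|_{L^1(I;Z)}=1$, so (a), (b), (d) hold. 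The left-hand side of (c) vanishes identically for $t\le T-\tau$: either $f_N(t)=0$, or $t,t+s\in[T-1/N,T]$ and hence $g_N(t+s)=g_N(t)$. Nevertheless $f_N,g_N\rightharpoonup 0$ in $L^2(I;\ell^2)$, while $\int_I\langle f_N,g_N\rangle\dt=1$ for every $N$. In your decomposition the first term and the $[0,T-\tau]$-part of the second term vanish, the third tends to zero as $N\to\infty$, and the strip part of the second term carries everything.

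So an extra hypothesis controlling the endpoint is needed. Any of the following would do:
\begin{itemize}
\item (c) for a.e.\ $t\in[0,T]$, with $g_N$ extended by zero beyond $T$. Your averages then live on all of $I$, are still bounded in $X'$ and Lipschitz in $Z'$, and your argument goes through verbatim.
\item Uniform integrability of $t\mapsto\|f_{N,\kappa}(t)\|_Z$ near $T$.
\item $\mathfrak s<\infty$ together with $\sup_N\|f_{N,\kappa}\|_{L^\infty(I;X)}\le C(\kappa)$, which gives $\int_{T-\tau}^T|\langle f_{N,\kappa},g_N-g_N^\tau\rangle|\dt\le C(\kappa)\tau^{1/\mathfrak s}$.
\end{itemize}
The last condition holds in Lemma~\ref{lemma:fg'}, where $\mathfrak s=2$ and $f_{N,\kappa}$ is bounded in $L^\infty(I;X)$ by \eqref{energyGal}, so the paper's application survives. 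The theorem as stated, and your proof of it, still need such an assumption.
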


\subsection{The deterministic problem}
In the scaling limit we obtain the following deterministic fluid-structure interaction system
\begin{equation}
\begin{aligned}\label{detNSF}  
&\divx \overline{\bu} =0 & \text{in } I\times\mathcal{O}_{\overline{\eta}}, 
\\
&\partial_t \overline{\bu}  + (\overline{\bu}\cdot \nabx)\overline{\bu}= 
 \delx \overline{\bu} -\nabx \overline{p} +\bff& \text{in } I\times\mathcal{O}_{\overline{\eta}},  
\\
&\partial_t^2 \overline{\eta} + \naby^4 \overline{\eta}-\gamma\naby^2\partial_t\overline{\eta}  
=
g-( \mathbb{S}\circ \bm{\varphi}_\eta\bn_\eta )\cdot\bn \,|\naby\bm{\varphi}_\eta|
& \text{in } I\times\Gamma,
\\
&\overline{\bu}\circ\bm{\varphi}_{\overline{\eta}}=(\partial_t\overline{\eta} )\bn & \text{on }I\times \Gamma,
\\
&\overline{\eta}(0,\cdot)=\eta_0(\cdot)  & \text{on } \Gamma,
\\
&\partial_t\overline{\eta}(0,\cdot)=\eta_\star(\cdot) & \text{on } \Gamma,
\\
&\overline{\bu}(0,\cdot)=\bu_0(\cdot) & \text{in }\mathcal{O}_{\eta_0},
\end{aligned}
\end{equation}
with $\gamma>0.$
Here we have a purely deterministic notion of a weak solution for the system of PDE \eqref{detNSF} given as follows:
\begin{definition}[Weak solution] \label{def:weakSolution}
Let $(\bff, g, \eta_0, \eta_\star,  \bu_0)$ be a dataset such that
\begin{equation}
\begin{aligned}
\label{dataset}
&\bff\in L^2(I;L^2_{\mathrm{loc}}(\mathbb{R}^2)), \quad g\in L^2(I;L^2(\Gamma)),
\\&
\eta_0 \in W^{2,2}(\Gamma ) \text{ with } \Vert \eta_0 \Vert_{L^\infty( \Gamma )} < L, \quad
\eta_\star \in L^{2}(\Gamma ),  
\\
&\bu_0\in L^{2}_{\mathrm{\divx}}(\mathcal{O}_{\eta_0} ) \text{ is such that }
 \bu_0 \circ\bm{\varphi}_{\eta_0}   =\eta_\star \bn \text{ on }  \Gamma.
\end{aligned}
\end{equation} 
We call the tuple
$(\overline{\eta},\overline{\bu})$
a weak solution of \eqref{detNSF} with data $(\bff, g, \eta_0,  \eta_\star,\bu_0)$ provided that the following holds:
\begin{itemize}
\item[(a)] The structure displacement $\overline{\eta}$ satisfies
\begin{align*}
\overline{\eta} \in L^\infty \big(I; W^{2,2}(\Gamma) \big)  \cap W^{1,\infty} \big(I; L^2(\Gamma) \big)\cap W^{1,2} \big(I; W^{1,2}(\Gamma) \big)
\quad \text{with} \quad \Vert \eta \Vert_{L^\infty(I \times \Gamma)} <L,
\end{align*}
as well as $\overline{\eta}(0)=\eta_0$ and $\partial_t\overline{\eta}(0)=\eta_\star$.
\item[(b)] The velocity field $\overline{\bu}$ satisfies $\overline{\bu} \circ\bm{\varphi}_{\overline{\eta}}   =\partial_t \overline{\eta}  \bn \text{ on } I\times \Gamma$ and
\begin{align*}
\overline{\bu} \in L^2 \big(I; W^{1,2}(\mathcal{O}_{\overline{\eta}}) \big)\cap L^\infty \big(I; L^2_{\Div}(\mathcal{O}_{\overline{\eta}}) \big) 
\end{align*}
as well as $\overline{\bu}(0)=\bfu_0$.
\item[(c)] For all  $(\phi, {\bfphi}) \in C^\infty(\overline{I}\times\Gamma) \times C^\infty_{\Div}(\overline{I}\times \R^2; \R^2)$ with $\phi(T,\cdot)=0$, ${\bfphi}(T,\cdot)=0$ and $\bfphi\circ\bfvarphi_\eta =\phi {\bfn}$ on $I\times\Gamma$, we have
\begin{align*}
\int_I  \frac{\mathrm{d}}{\dt}\bigg(
\int_\Gamma \partial_t \overline{\eta} \, \phi \dy
+&\int_{\mathcal{O}_{\overline{\eta}}}\overline{\bu}  \cdot {\bfphi}\dx
\bigg)\dt 
\\=&\int_I \int_\Gamma \big(\partial_t \overline{\eta}\, \partial_t\phi - \partial_y^2\overline{\eta}\,\partial_y^2 \phi
-\gamma\partial_t\partial_y\overline{\eta}\,\partial_y \phi+
 g\, \phi \big)\dy\dt 
\\&+
\int_I  \int_{\mathcal{O}_{\overline{\eta}}}\big( \overline{\bu}\cdot \partial_t  {\bfphi} + \overline{\bu} \otimes \overline{\bu}: \nabx {\bfphi} 
-
\nabx \overline{\bu}:\nabx {\bfphi} +\bff\cdot{\bfphi}
  \big) \dx\dt .
\end{align*} 
\item[(c)] For a.a. $t\in I$, we have
\begin{equation}
\begin{aligned}
\label{energyEst}
 \Vert\partial_t\overline{\eta} (t)\Vert_{L^2(\Gamma)}^2&+ \Vert \naby^2\overline{\eta}(t)\Vert_{L^2(\Gamma)}^2
+\Vert\overline{\bu}(t)\Vert_{L^2(\mathcal{O}_{\overline{\eta}})}^2 
+ \gamma\int_0^t \Vert \partial_y\partial_t\overline{\eta} \Vert_{L^2(\Gamma)}^2 \ds
+ \int_0^t \Vert \nabx\overline{\bu} \Vert_{L^2(\mathcal{O}_{\overline{\eta}})}^2 \ds
\\
\lesssim&
 \Vert \eta_\star \Vert_{L^2(\Gamma)}^2+  \Vert \naby^2\eta_0 \Vert_{L^2(\Gamma)}^2
+\Vert\bu_0\Vert_{L^2(\mathcal{O}_{\eta_0})}^2
\\&+
 \int_0^t \Vert g \Vert_{L^2(\Gamma)}^2 \ds
 +
  \int_0^t \Vert \bff\Vert_{L^2(\mathcal{O}_{\overline{\eta}})}^2 \ds.
\end{aligned}
\end{equation} 
\end{itemize}
\end{definition}
The existence of a weak solution can be shown as in \cite{lengeler2014weak} (also see \cite{muha2013existence} for the cylindrical Koiter model). The term $\gamma\partial_t\partial_y^2\eta$ is not included there, but it does not alter the arguments as it actually regularises the system. 
Note that we use a pressure-free formulation (that is, with test-function satisfying additionally $\Div\bfphi=0$) here. If the solution possesses more regularity,
the pressure can be recovered. This is the case for a strong solution which is defined as follows.

\begin{definition}[Strong solution] \label{def:strongSolution}
We call the triple $(\overline{\eta},\overline{\bu},\overline{p})$ a strong solution of \eqref{detNSF} provided $(\overline{\eta},\overline{\bu})$ is a weak solution of \eqref{detNSF}, it satisfies
\begin{align*}
\overline{\eta} \in W^{1,\infty} \big(I; W^{1,2}(\Gamma) \big)\cap W^{1,2} \big(I; W^{2,2}(\Gamma) \big)\cap  L^\infty \big(I; W^{3,2}(\Gamma) \big) \cap W^{2,2}(I;L^2(\Gamma)),\\
\overline{\bu} \in W^{1,2} \big(I; L^{2}(\mathcal{O}_{\overline{\eta}}) \big)\cap  L^2 \big(I; W^{2,2}(\mathcal{O}_{\overline{\eta}}) \big),
\quad
\overline{p}\in  L^2 \big(I; W^{1,2}(\mathcal{O}_{\overline{\eta}}) \big),
\end{align*}
and equations \eqref{detNSF}$_1$--\eqref{detNSF}$_3$ are satisfied a.a. in space-time.
\end{definition}
The following result is proved in \cite{breit2022regularity}.
\begin{theorem}\label{thm:Br}
Suppose that the dataset $(\bff, g, \eta_0, \eta_\star,  \bfu_0)$
satisfies in addition to \eqref{dataset} that
\begin{equation}
\begin{aligned}
\label{dataset'}
%
\eta_0 \in W^{3,2}(\Gamma) , \quad \eta_\star \in W^{1,2}(\Gamma),\quad 
\bfu_0\in W^{1,2}(\mathcal{O}_{\eta_0}).
\end{aligned}
\end{equation}
Then there is a unique strong solution $(\overline{\eta},\overline{\bu},\overline{p})$  of \eqref{detNSF} in the sense of Definition \ref{def:strongSolution}. The interval of existence is of the form $I = (0, t)$, where $t < T$ only in case $\mathcal{O}_{\overline{\eta}(s)}$ approaches a self-intersection when $s\rightarrow t$ or it degenerates
(namely, if $\lim_{s\rightarrow t}\partial_y\bfvarphi_\eta(s,\Gamma)=0$ or $\lim_{s\rightarrow t}\bfn(y)\cdot\bfn_{\eta(s)}(y)=0$ for some $y\in\Gamma$).
\end{theorem}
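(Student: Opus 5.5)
The statement is Theorem \ref{thm:Br}: global unique strong solutions of the deterministic target system \eqref{detNSF} with $\gamma>0$, for data satisfying \eqref{dataset'}, up to a first time of self-intersection or degeneracy. My plan has three parts: a local existence result by linearisation and a fixed point, a priori estimates at the strong level that depend only on the geometry not degenerating, and a continuation argument.

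\textbf{Local existence.} I pull the problem back to the reference domain $\mathcal O$ with the Hanzawa transform $\bm{\Psi}_{\overline\eta}$ from \eqref{map}. This turns \eqref{detNSF} into a quasilinear Stokes system with variable coefficients. These coefficients depend on $\nabla\bm{\Psi}_{\overline\eta}$, which is controlled by $\overline\eta\in W^{3,2}(\Gamma)\hookrightarrow C^{2}(\Gamma)$. The system is coupled to the damped beam equation through the kinematic condition $\overline\bu\circ\bm{\varphi}_{\overline\eta}=\partial_t\overline\eta\,\bn$.

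For the linearised problem I proceed in two steps. I freeze the geometry and the convective term at a given pair $(\zeta,\mathbf v)$ in the strong solution class. I then solve the resulting linear fluid--beam system by a Galerkin scheme. To get the strong estimates, I test with $\partial_t\bu$ and $\partial_t\eta$ and with $-\partial_y^2\partial_t\eta$. The term $-\gamma\partial_y^2\partial_t\eta$ is essential here: it gives $\partial_t\eta\in L^2(I;W^{2,2})$, which matches the regularity of the trace of $\bu\in L^2(W^{2,2})$. To obtain $\bu\in L^2(W^{2,2})$ and $p\in L^2(W^{1,2})$, I use Stokes regularity in the moving domain with a Lipschitz boundary in $W^{3,2}$.

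A contraction argument on a short time interval then yields a local strong solution. Uniqueness within the strong class follows from the same difference estimates.

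\textbf{Global a priori estimates.} Next I show that on any interval $(0,t)$ on which the geometry stays away from self-intersection and degeneracy, the strong norms remain bounded. The energy inequality \eqref{energyEst} gives the starting bounds.

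The key step, and the main obstacle, is a higher-order estimate in 2D. I test the momentum equation with $\partial_t\bu$ and the structure equation with $\partial_t^2\eta$, respectively $-\partial_y^2\partial_t\eta$. The difficulties are the boundary terms from the domain motion and the pressure on the interface. For these I plan to use:
\begin{itemize}
\item a Ladyzhenskaya-type inequality in the moving domain, to control $\int(\bu\cdot\nabla)\bu\cdot\partial_t\bu$ by $\|\bu\|\,\|\nabla\bu\|\,\|\nabla^2\bu\|$;
\item the dissipation $\gamma\|\partial_t\partial_y\eta\|^2$, which is exactly what makes $\partial_t\eta\in L^2(W^{1,2})$ and hence the boundary Lipschitz in time;
\item a Bogovskii or extension operator, to handle the pressure trace term against $\partial_t\eta$.
\end{itemize}
These steps give a Gronwall inequality whose constants depend only on the lower bounds for $|\partial_y\bm{\varphi}_{\overline\eta}|$ and $\bn\cdot\bn_{\overline\eta}$ and on the distance to self-contact.

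\textbf{Continuation.} Let $t$ be the maximal existence time. If $t<T$ and the geometry neither self-intersects nor degenerates as $s\to t$, then the norms stay bounded and the solution has a limit at time $t$ in the data class \eqref{dataset'}. The local result then restarts the solution beyond $t$, contradicting maximality. This gives the stated alternative.
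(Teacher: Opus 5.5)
The paper does not prove this theorem; it quotes it from \cite{breit2022regularity}. Your architecture (local strong solution, global a priori bound, continuation) is the natural one. However, the step that makes the bound \emph{global} is missing.

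\textbf{Main gap: the Stokes constant.} Applying Ladyzhenskaya to $\int(\overline{\bu}\cdot\nabx)\overline{\bu}\cdot\partial_t\overline{\bu}\dx$ produces $\|\nabx^2\overline{\bu}\|$. Testing the beam equation with $-\partial_y^2\partial_t\overline{\eta}$ requires the interface stress in $L^2(\Gamma)$. Both therefore need, at each time, a steady Stokes estimate on $\mathcal O_{\overline{\eta}(t)}$ (with $\overline p$ suitably normalised):
\[
\|\overline{\bu}\|_{W^{2,2}(\mathcal O_{\overline{\eta}})}+\|\overline p\|_{W^{1,2}(\mathcal O_{\overline{\eta}})}\le C\big(\|\partial_t\overline{\bu}+(\overline{\bu}\cdot\nabx)\overline{\bu}-\bff\|_{L^2(\mathcal O_{\overline{\eta}})}+\|\partial_t\overline{\eta}\|_{W^{3/2,2}(\Gamma)}\big).
\]
Everything hinges on what $C$ depends on.

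\begin{itemize}
\item The only Stokes regularity you invoke is the classical one for $W^{3,2}$ (hence $C^{1,1}$) boundaries. Its constant depends on $\|\partial_y^2\overline{\eta}\|_{L^\infty(\Gamma)}\lesssim\|\overline{\eta}\|_{W^{3,2}(\Gamma)}$. That is the very top-order quantity your Gronwall argument is meant to bound.
\item Inserting it gives $y'\le C(y)\,y$ with $C$ increasing, which only reproduces a local bound. Ladyzhenskaya, the damping and a Bogovskii operator do not remove this dependence.
\item Hence your claim that the constants depend only on $|\partial_y\bfvarphi_{\overline{\eta}}|$, $\bn\cdot\bn_{\overline{\eta}}$ and the distance to contact is unsupported.
\end{itemize}

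What you need is a Stokes estimate whose constant is controlled by energy-level geometry: $\sup_t\|\overline{\eta}(t)\|_{W^{2,2}(\Gamma)}$ from \eqref{energyEst} plus non-degeneracy. At minimum, higher norms of $\overline{\eta}$ may enter only through a weight that the energy estimate makes integrable in time. This amounts to $W^{2,2}\times W^{1,2}$-regularity in domains whose boundary is only $W^{2,2}\hookrightarrow C^{1,1/2}$, with curvature merely in $L^2$. That is below the classical $C^{1,1}$ threshold. This non-classical Stokes theory is where the real work of the global result lies, and your proposal does not supply it.

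\textbf{The test function $\partial_t\overline{\bu}$.} It is not an admissible fluid partner of $\partial_t^2\overline{\eta}$. Differentiating $\overline{\bu}\circ\bfvarphi_{\overline{\eta}}=\partial_t\overline{\eta}\,\bn$ in time gives
\[
\partial_t\overline{\bu}\circ\bfvarphi_{\overline{\eta}}=\partial_t^2\overline{\eta}\,\bn-\partial_t\overline{\eta}\,\big((\nabx\overline{\bu})\circ\bfvarphi_{\overline{\eta}}\big)\bn .
\]
So the interface stress does not cancel between the two equations, and the motion of the domain adds a Reynolds-transport term. You are left with boundary terms of the type
\[
\int_\Gamma|\partial_t\overline{\eta}|\,|(\mathbb S\bn_{\overline{\eta}})\circ\bfvarphi_{\overline{\eta}}|\,|(\nabx\overline{\bu})\circ\bfvarphi_{\overline{\eta}}|\dy
\quad\text{and}\quad
\int_{\partial\mathcal O_{\overline{\eta}}}|\nabx\overline{\bu}|^2\,|\partial_t\overline{\eta}\circ\bfvarphi_{\overline{\eta}}^{-1}|\,\dd\mathcal H^1 .
\]
These involve traces of $\nabx\overline{\bu}$ and $\overline p$, so a Bogovskii correction does not help. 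They close only by trace interpolation, using $\|\partial_t\overline{\eta}\|_{L^\infty(\Gamma)}\lesssim\|\partial_t\overline{\eta}\|_{W^{1,2}(\Gamma)}\in L^2(I)$ (this is where the damping enters) together with the same Stokes estimate.

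\textbf{The continuation alternative.} Your local theory is built on the Hanzawa transform \eqref{map}, which requires $\|\overline{\eta}\|_{L^\infty(\Gamma)}<L$. Your continuation argument therefore only yields a weaker alternative: one in which $\|\overline{\eta}(s)\|_{L^\infty(\Gamma)}\to L$ must be added to self-intersection and degeneracy. To reach the alternative as stated, you must change the reference configuration when $\overline{\eta}$ approaches the edge of the tubular neighbourhood $S_L$ while the curve stays regular. This is where the transversality condition $\bn\cdot\bn_{\overline{\eta}}\neq0$ in the statement enters.
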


\subsection{Extension in moving domains}
\label{sec:ext}

We finally present a linear solenoidal extension. It is borrowed from \cite{MuSc}.
To this end, we first consider the \textit{corrector map}
\begin{align}\label{eq:corr}
\mathscr K_\eta:L^1(\Gamma )\rightarrow\mathbb{R},\qquad \mathscr K_\eta(\xi)=\frac{\int_{\mathcal{A}_\kappa}{\xi}(\by(\bx))\lambda_\eta(t,\bx)\dx}{\int_{\mathcal{A}_\kappa}\lambda_\eta(t,\bx)\dx},
\end{align}
where $\lambda_\eta\geq 0$  for $(t,\bx)\in I\times \mathcal{A}_\kappa$ is an appropriately chosen weight function, cf. \cite[equ. (3.3)]{MuSc}, and $\mathcal{A}_\kappa:=S_{L-\kappa/2}\setminus S_{L-\kappa}$ for some small $\kappa>0$. It satisfies
\begin{align*}
&\Vert\mathscr K_\eta(\xi)\Vert_{L^q(I)}
\lesssim \Vert\xi\Vert_{L^q(I;L^1(\Gamma ))}
\\
&\Vert\partial_t\mathscr K_\eta(\xi)\Vert_{L^q(I)}
\lesssim \Vert\partial_t\xi\Vert_{L^q(I;L^1(\Gamma ))}
+
\Vert\xi\partial_t\eta\Vert_{L^q(I;L^1(\Gamma ))}
\end{align*}
for all $q\in[1,\infty]$. The corrector $\mathscr K_\eta$ above preconditions the boundary data to be compatible with the interior solenoidality. 
The following is proved in \cite[Prop. 3.3]{MuSc} and it provides a solenoidal extension. For that we introduce the solenoidal space $W^{1,1}_{\Div}(\mathcal O\cup S_{\alpha} ):=\{\mathbf w\in W^{1,1}(\mathcal O\cup S_{\alpha} )\, :\, \divx\mathbf w=0\}$ for $0<\alpha<L$. 
\begin{proposition}
\label{prop:musc}
For a given $\eta\in L^\infty(I;W^{1,2}(\Gamma))$ with $\|\eta\|_{L^\infty(I\times \Gamma)}<\alpha<L$, there is a linear operator
\begin{align*}
 \test :\{\xi\in L^1(I;W^{1,1}(\Gamma)):\,\mathscr K_\eta(\xi)=0\}\rightarrow L^1(I;W^{1,1}_{\Div}(\mathcal O\cup S_{\alpha} )),
\end{align*}
such that the tuple $( \test (\xi-\mathscr K_\eta(\xi)),\xi-\mathscr K_\eta(\xi))$ satisfies
\begin{align*}
 \test (\xi-\mathscr K_\eta(\xi))&\in L^\infty(I;L^2({\mathcal{O}}_\eta))\cap L^2(I;W^{1,2}_{\Div}({\mathcal{O}}_\eta)),\\
\xi-\mathscr K_\eta(\xi)&\in L^\infty(I;W^{2,2}( \Gamma ))\cap  W^{1,\infty}(I;L^{2}( \Gamma )),
\\
( \test &(\xi-\mathscr K_\eta(\xi))\circ\bm{\varphi}_\eta=\bn(\xi-\mathscr K_\eta(\xi)),
\\
\partial_t( \test &(\xi-\mathscr K_\eta(\xi))  \in L^2(I;L^2({\mathcal{O}}_\eta)),
\\
 \test (\xi-\mathscr K_\eta&(\xi))(t,\bx)=0 \text{ for } (t,\bx)\in I \times ({\mathcal{O}} \setminus S_{\alpha})
\end{align*}
provided we have $\xi\in L^\infty(I;W^{2,2}( \Gamma ))\cap  W^{1,\infty}(I;L^{2}(\Gamma))$.
In particular, we have the estimates
\begin{align}
\label{musc0}
\| \test (\xi-\mathscr K_\eta(\xi))\|_{L^q(I;L^{p}({\mathcal{O}} \cup S_{\alpha}  ))}\lesssim \|\xi\|_{L^q(I;L^{p}( \Gamma ))},\\
\label{musc1}
\| \test (\xi-\mathscr K_\eta(\xi))\|_{L^q(I;W^{1,p}({\mathcal{O}} \cup S_{\alpha}  ))}\lesssim \|\xi\|_{L^q(I;W^{1,p}( \Gamma ))}+\|\xi\naby \eta\|_{L^q(I;L^{p}( \Gamma ))},\\
\label{musc2}\|\partial_t \test (\xi-\mathscr K_\eta(\xi))\|_{L^q(I;L^{p}( {\mathcal{O}}\cup S_{\alpha}))}\lesssim \|\partial_t\xi\|_{L^q(I;L^{p}( \Gamma ))}+\|\xi\partial_t \eta\|_{L^q(I;L^{p}( \Gamma ))},
\end{align}
for any $p\in (1,\infty),q\in[1,\infty]$.
\end{proposition}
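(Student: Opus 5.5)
The plan is to follow the construction of \cite[Prop. 3.3]{MuSc} and reduce everything to a fixed-domain extension composed with the Hanzawa transform $\bm{\Psi}_\eta$. Given $\xi$ with $\mathscr K_\eta(\xi)=0$ (replace $\xi$ by $\xi-\mathscr K_\eta(\xi)$), we first build a boundary datum in the tubular neighbourhood $S_L$. We set, in the coordinates $(\by(\bx),s(\bx))$,
\[
\mathbf w(t,\bx)=\xi(t,\by(\bx))\,\bn(\by(\bx))\,\lambda(t,\bx),
\]
with a weight $\lambda$ chosen so that $\divx \mathbf w=0$ in $S_\alpha$. In normal coordinates, the divergence of a field $f\bn$ reduces to $\partial_s f$ plus the curvature correction $\kappa f$. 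Hence $\lambda$ solves a first-order ODE in $s$, namely $\partial_s(\lambda J)=0$ with $J$ the Jacobian of the normal coordinates. This gives $\lambda$ explicitly, normalized so that $\lambda=1$ on the curve $s=\eta(t,\by)$, i.e. on $\bm{\varphi}_\eta(\Gamma)$. The boundary condition $\test(\xi)\circ\bm{\varphi}_\eta=\xi\bn$ then holds by construction, with only first-order dependence on $\eta$.

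Next, we cut off in the normal direction. The field must vanish on $\mathcal O\setminus S_\alpha$, but cutting off destroys solenoidality. The defect is a divergence supported in the annulus $\mathcal A_\kappa$, and its total flux equals $\int_{\mathcal A_\kappa}\xi(\by(\bx))\lambda_\eta\dx$ up to normalization. This is exactly what the zero-mean condition $\mathscr K_\eta(\xi)=0$ kills, which is the reason for the definition \eqref{eq:corr} with the same weight $\lambda_\eta$. We then correct by a Bogovski\u{\i} operator on the fixed annulus $\mathcal A_\kappa$, which is independent of time and of $\eta$. This keeps the correction linear, compactly supported, and bounded $L^p\to W^{1,p}$ for $p\in(1,\infty)$.

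The estimates then follow by differentiating the explicit formula. Estimate \eqref{musc0} is immediate since $\lambda$ and the Bogovski\u{\i} part are bounded in $L^\infty$ given $\|\eta\|_\infty<\alpha$. For \eqref{musc1}, $\nabx$ falls either on $\xi$, giving $\naby\xi$, or on $\lambda$. Since $\lambda$ depends on $\eta(\by)$ through the normalization at $s=\eta$, the latter produces $\xi\naby\eta$. No higher derivatives of $\eta$ appear, which is why $\eta\in W^{1,2}$ suffices. For \eqref{musc2}, $\partial_t$ falls on $\xi$ or on $\lambda$ through $\eta$, giving $\xi\partial_t\eta$. The Bogovski\u{\i} operator commutes with $\partial_t$ on the fixed annulus. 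The regularity memberships for $\xi\in L^\infty W^{2,2}\cap W^{1,\infty}L^2$ follow by applying these estimates with $p=2$, together with Sobolev embedding in 1D ($\xi,\naby\eta$ bounded).

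The main obstacle is arranging exact solenoidality together with estimates depending only on $\naby\eta$ and $\partial_t\eta$. This means choosing $\lambda$ so that it is normalized on the moving curve while the divergence identity holds uniformly for $\|\eta\|_\infty<\alpha$. The flux compatibility through $\mathscr K_\eta$ also has to be matched precisely with the weight $\lambda_\eta$. If this explicit construction becomes cumbersome, I would try a fallback: push a fixed-domain extension forward via the Piola transform of $\bm{\Psi}_\eta$, which preserves divergence-freeness. The drawback is that it introduces $\nabla\bm{\Psi}_\eta$, and hence only $\naby\eta$, which is consistent with \eqref{musc1}.
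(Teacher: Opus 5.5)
Your main construction is correct and is essentially the one behind \cite[Prop. 3.3]{MuSc}, which the paper cites without proof. It extends $\xi\bn$ in the normal direction with the Jacobian weight of the normal coordinates normalised on $s=\eta$, so that only $\partial_y\eta$ and $\partial_t\eta$ enter; it then applies a cut-off in $s$ and a Bogovski\u{\i} correction on a fixed annulus, whose compatibility condition is exactly $\mathscr K_\eta(\xi)=0$.

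Drop the Piola fallback, since differentiating $\mathcal J_\eta\bv$ produces $\nabx^2\bm{\Psi}_\eta$, i.e.\ $\partial_y^2\eta$, and $\partial_t\partial_y\eta$ under $\partial_t$, so it would not give \eqref{musc1}--\eqref{musc2}. Also, for the memberships you need $\xi\in W^{1,\infty}(\Gamma)$, not boundedness of $\partial_y\eta$, which under the hypothesis is only in $L^2$.
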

The following result is a consequence of Proposition \ref{prop:musc}. 
\begin{corollary}\label{cor:3.5}
Let the assumptions of Proposition \ref{prop:musc} be satisfied and in addition, let $a,r\in[2,\infty]$, $p,q\in(1,\infty)$ and $s\in[0,1]$, and assume that $\eta\in L^r(I;W^{2,a}(\Gamma))\cap W^{1,r}(I;L^a(\Gamma))$. Let $\xi \in W^{s,p}(\Gamma)$ and let $\xi_\delta$ be a smooth approximation of $\xi$ in $\Gamma$. Then $\mathcal E^\eta_{\delta}(\xi):=\test (\xi_\delta-\mathscr K_\eta(\xi_\delta))$ satisfies all the conclusions in Proposition \ref{prop:musc}. In particular,
\begin{align*}
\Vert \partial_t \mathcal E^{\eta}_\delta(\xi) \Vert_{L^r(I;L^a(\mathcal{O}\cup S_\alpha))}
\lesssim
\Vert (\xi_\delta)\partial_t\eta \Vert_{L^r(I;L^a(\Gamma))}
\end{align*}
and
\begin{align*}
\Vert \mathcal E^{\eta}_\delta(\xi) - \test(\xi-\mathscr K_\eta(\xi))\Vert_{L^p(\mathcal{O}\cup S_\alpha)}
\lesssim
\Vert \xi_\delta - \xi \Vert_{L^p(\Gamma)}
\end{align*}
holds uniformly in $t\in I$.
\end{corollary}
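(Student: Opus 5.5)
The corollary follows by applying Proposition \ref{prop:musc} with the smooth, time-independent boundary datum $\xi_\delta$ in place of $\xi$, and then exploiting linearity of $\test$ and of $\mathscr K_\eta$.

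\textbf{Step 1: $\mathcal E^\eta_\delta(\xi)$ satisfies the conclusions of Proposition \ref{prop:musc}.} Since $\xi_\delta\in C^\infty(\Gamma)$ does not depend on time, it lies trivially in $L^\infty(I;W^{2,2}(\Gamma))\cap W^{1,\infty}(I;L^2(\Gamma))$. The function $\xi_\delta-\mathscr K_\eta(\xi_\delta)$ differs from $\xi_\delta$ only by a time-dependent constant. That constant is bounded, by the $L^q(I)$-bound on $\mathscr K_\eta$. Its time derivative is controlled by $\|\xi_\delta\partial_t\eta\|_{L^1(\Gamma)}$, which is finite because $\partial_t\eta\in L^r(I;L^a(\Gamma))$. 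Moreover $\mathscr K_\eta$ applied to this difference vanishes, since $\mathscr K_\eta(1)=1$. Hence $\xi_\delta-\mathscr K_\eta(\xi_\delta)$ belongs to the admissible class, and all qualitative conclusions of Proposition \ref{prop:musc} carry over to $\mathcal E^\eta_\delta(\xi)$.

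\textbf{Step 2: the time-derivative bound.} Apply \eqref{musc2} with $(q,p)=(r,a)$ to the datum $\xi_\delta$. Because $\partial_t\xi_\delta=0$, the first term on the right-hand side drops out, and only $\|\xi_\delta\partial_t\eta\|_{L^r(I;L^a(\Gamma))}$ remains. The endpoint cases $a=\infty$ or $r=\infty$ need a remark, since Proposition \ref{prop:musc} only allows $p\in(1,\infty)$. For $a=\infty$ I would argue by taking $p\to\infty$, relying on the explicit (Hanzawa-type) construction of $\test$. Otherwise one simply restricts to finite $a$, since $q=\infty$ is already allowed.

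\textbf{Step 3: the approximation estimate.} Fix $t$. By linearity of $\mathscr K_\eta$ and $\test$,
\[
\mathcal E^\eta_\delta(\xi)-\test(\xi-\mathscr K_\eta(\xi))=\test\big((\xi_\delta-\xi)-\mathscr K_\eta(\xi_\delta-\xi)\big).
\]
Then \eqref{musc0}, applied pointwise in time (equivalently with $q=\infty$ for time-independent data), gives the bound $\lesssim\|\xi_\delta-\xi\|_{L^p(\Gamma)}$ uniformly in $t\in I$.

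\textbf{Main obstacle.} The delicate point is that Proposition \ref{prop:musc} is formulated for regular data, whereas $\xi$ is only in $W^{s,p}(\Gamma)$ with $s$ possibly $0$. So $\test(\xi-\mathscr K_\eta(\xi))$ must first be defined by density, as the $L^p$-limit of $\mathcal E^\eta_{\delta}(\xi)$. This limit exists by the Cauchy property, which is precisely the estimate from Step 3 applied to two regularisations $\xi_{\delta}$ and $\xi_{\delta'}$.
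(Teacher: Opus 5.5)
Your argument is correct and follows the route the paper intends. The paper gives no separate proof and simply records the corollary as a consequence of Proposition~\ref{prop:musc}, applied to the smooth, time-independent datum $\xi_\delta$ together with linearity of $\test$ and $\mathscr K_\eta$. Your density definition of $\test(\xi-\mathscr K_\eta(\xi))$ makes explicit a point the paper leaves implicit, namely that for $s<1$ the datum $\xi$ need not lie in the domain $L^1(I;W^{1,1}(\Gamma))$ of $\test$.

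The one soft spot is the endpoint $a=\infty$: letting $p\to\infty$ in \eqref{musc2} needs constants uniform in $p$, which the proposition does not provide. The paper's one-line justification does not cover this endpoint either, and the time-derivative bound is only used with $a=2$, $r=\infty$ in Section~\ref{sec:comp}.
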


\section{Stochastic framework and the main result}
\label{sec:frame}
\subsection{Stratonovich-to-It\^o map}
\label{sec:strat}
To properly introduce the stochastic fluid-structure interaction model, we first consider a disjoint partition of the set of punctured integers $\mathbb{Z}\setminus\{0\} $ into the union of its positive $\mathbb{Z}_{+}$ and negative $\mathbb{Z}_{-}$ parts and define the family $(\sigma_k)_{k\in \mathbb{Z}}$ of constant coefficients as
\begin{equation}\label{ansatz}
\left\{\begin{aligned}
\sigma_k(y) & =    \frac{\mathrm{i}\,e^{\mathrm{i} k}}{|k| }\bm{1}_{\{N\leq|k|\leq 2N\}}
\Big(\sum_{N\leq|k|\leq 2N}\frac{1}{|k|^2}\Big)^{-1/2}, \qquad N\in\mathbb{N},
\\[0.4em]
\sigma_{0} (y) & = 0.
\end{aligned} \right.
\end{equation}
Let $(\Omega,\mathfrak F,(\mathfrak F_t)_{t\geq0},\mathbb P)$ be a stochastic basis with a complete, right-continuous filtration. We pair the constant coefficients above with the family $(W_t^k)_{t\geq0}$ of complexified Brownian motions relative to the filtration $(\mathfrak F_t)$ and defined by
\begin{align}\label{BMimaginary}
W_t^k
= \left\{
  \begin{array}{lr} 
 B_t^k+i \,B_t^{-k} & \text{ if }k\in \mathbb{Z}_{+}
 \\ 
B_t^{-k}-i \,B_t^{k} & \text{ if }k\in \mathbb{Z}_{-}
  \end{array}.
\right.
\end{align}
Here, $(B_t^k)_{t\geq0}$ is the usual family of real-valued, independent, identically distributed Brownian motions leading to
\begin{align}\label{qv}
\left\langle W^k,W^\mathbf{\ell}\right\rangle_t
=  
2t\,\delta_{k,-\mathbf{\ell}}.
\end{align}  
We note that due to the precise definition of the noise coefficient, 
we obtain the following Stratonovich-to-It\^o map
\begin{align*} 
 \sqrt{\gamma}\sum_{k\in\mathbb{Z}} \sigma_k  \,\naby\partial_t\eta   \circ  \dd W_t^k 
=&   
\sqrt{\gamma}
\sum_{k\in\mathbb{Z}} \sigma_k \,\naby\partial_t\eta   \dd W_t^k 
    +
 \frac{1}{2} 
 \dd
 \left\langle  
 \sum_{k\in\mathbb{Z}} \sqrt{\gamma}\,\sigma_k  \,\naby\partial_t\eta(\cdot,y) \dy , W^k \right\rangle_t.
\end{align*}
However, since
\begin{align*}
\dd(\sqrt{\gamma}\,\sigma_k  \, \naby\partial_t\eta )
=
-
\sqrt{\gamma}\,\sigma_k  \,\naby(\naby^4 \eta  + g_\eta -g)\dt
+
\gamma
\sum_{\ell\in\mathbb{Z}} \sigma_k\naby( {\sigma}_\ell \naby\partial_t\eta)\circ\dd W_t^\ell
\end{align*}
where $g_\eta$ is defined right after \eqref{divfree} below,
it follows from \eqref{qv} and the identity $\sigma_{-k}=\overline{\sigma}_k$ that
\begin{align*}
\frac{1}{2}
\dd
\left\langle  
 \sqrt{\gamma}\sum_{k\in\mathbb{Z}} \sigma_k  \,\naby\partial_t\eta(\cdot,y)  , W^k \right\rangle_t
&=
 \frac{\gamma}{2} \sum_{k\in\mathbb{Z}} \sum_{\ell\in\mathbb{Z}} \sigma_k\naby( {\sigma}_\ell \naby\partial_t\eta) \, \dd \langle W^k, W^\ell \rangle_s
 \\
 &=
 \frac{\gamma}{2} \sum_{k\in\mathbb{Z}} \sum_{\ell\in\mathbb{Z}} \sigma_k\naby( {\sigma}_\ell \naby\partial_t\eta) \, 2\,\delta_{k,-\ell}\dd t
 \\&
 =\gamma
 \sum_{k\in\mathbb{Z}}
 \sigma_k\,\naby ( \overline{\sigma}_k\,\naby \partial_t\eta)
\dt .
\end{align*}
Thus, we may conclude that
\begin{align*} 
 \sqrt{\gamma}\sum_{k\in\mathbb{Z}} \sigma_k  \,\naby\partial_t\eta  \circ  \dd W_t^k 
=&   
\sqrt{\gamma}
\sum_{k\in\mathbb{Z}} \sigma_k \,\naby\partial_t\eta     \dd W_t^k 
    +
\gamma
 \sum_{k\in\mathbb{Z}}
 \sigma_k\,\naby ( \overline{\sigma}_k\,\naby \partial_t\eta)
\dt. 
\end{align*}
However, we also note that
\begin{align*}
\gamma
 \sum_{k\in\mathbb{Z}}
 \sigma_k\,\naby ( \overline{\sigma}_k\,\naby \partial_t\eta)
\dt
=& 
\gamma \Big(\sum_{N\leq|k|\leq 2N}\frac{1}{|k|^2}\Big)^{-1}
\sum_{N\leq|k|\leq 2N}  \frac{\mathrm{i}\,e^{\mathrm{i} k}}{|k| }\frac{(-\mathrm{i}\,e^{-\mathrm{i} k})}{|k| }\naby^2\partial_t\eta  \dt 
\\=&
\gamma\,  \naby^2\partial_t\eta \dt.
\end{align*}
Thus, it follows that  we can rewrite the equation for $\eta$ as
 \begin{equation}
\begin{aligned}\label{sde2}
&\dd \partial_t \eta + (\naby^4 \eta   - \gamma\, \naby^2 \partial_t\eta + g_\eta-g)\dt
=
\sum_{k\in\mathbb{Z}} ( {\sigma}_k \naby\partial_t\eta) \dd W_t^k
\end{aligned}  
\end{equation}  
in which case the distributional form for the dissipative term is
\begin{align*}
\gamma \sum_{k\in\mathbb{Z}} \int_{\Gamma}\partial_t\eta  \,
 \sigma_k\,\naby ( \overline{\sigma}_k\,\naby  \phi )
\dy\dt
=
 \gamma\int_{\Gamma}\partial_t\eta  \,\naby^2 \phi 
\dy\dt.
\end{align*}
for any regular test function $\phi$.

\subsection{The stochastic system}
Given the above preparation, we consider the  model for the flow of an incompressible viscous fluid interacting with a flexible structure with transport noise.
In particular, for a given pair of forces $g:(t, \by)\in  I \times \Gamma \mapsto   g(t,\by)\in \mathbb{R}$
 and $\bff:(t, \bx)\in  I \times \Oeta \mapsto   \bff(t,\bx)\in \mathbb{R}^2$,
 we consider the problem where the unknowns consists of a structure displacement function $\eta:(\omega,t, \by)\in \Omega\times I \times \Gamma \mapsto   \eta(\omega,t,\by)\in \mathbb{R}$, a fluid velocity field $\mathbf{u}:(\omega,t, \mathbf{x})\in \Omega\times I \times \Oeta \mapsto  \mathbf{u}(\omega,t, \mathbf{x}) \in \mathbb{R}^2$ and a pressure function $p:(\omega,t, \mathbf{x})\in \Omega\times I \times \Oeta \mapsto  p(\omega,t, \mathbf{x}) \in \mathbb{R}$ 
 such that the system of equations 
\begin{equation}
\begin{aligned}\label{divfree}  
&\divx \bu=0 & \text{in } I\times\Oeta, 
\\
&\partial_t \bu  + (\mathbf{u}\cdot \nabx)\mathbf{u} 
= 
 \delx \bu -\nabx p +\bff& \text{in } I\times\Oeta,  
\\
&\dd \partial_t \eta + (\naby^4 \eta  + g_\eta -g)\dt
=
\sqrt{\gamma}
\sum_{k\in\mathbb{Z}} ( {\sigma}_k \naby\partial_t\eta)\circ\dd W_t^k
& \text{in } I\times\Gamma
,  
\\&\bu\circ\bm{\varphi}_\eta=(\partial_t\eta)\bn & \text{on } I\times \Gamma,
\\
&\eta(0,\cdot)=\eta_0(\cdot),  & \text{on } \Gamma,
\\
&\partial_t\eta(0,\cdot)=\eta_\star(\cdot) & \text{on } \Gamma,
\\
&\bu(0,\cdot)=\bu_0(\cdot) & \text{in } \mathcal{O}_{\eta_0}
\end{aligned}
\end{equation}
holds.  Here,
\begin{align*}
g_\eta:=( \mathbb{S}\circ \bm{\varphi}_\eta\bn_\eta )\cdot\bn \,|\naby\bm{\varphi}_\eta|,
\qquad
\mathbb{S}:= (\nabx \bu +(\nabx \bu)^\top) -p\mathbb{I} , 
\end{align*}
$\bn$ and $\bn_\eta$ are the  normal vectors to $\Gamma$ and $\partial\Oeta$, respectively, and $\mathbb{I}$ is the $2\times2$ identity matrix. Our   interest is in solutions of \eqref{divfree} that are both weak in the deterministic and stochastic sense. This refers to the so-called \textit{weak martingale solutions}. In order to give the precise formulation of this solution, we also introduce the Piola transform
\begin{align}
\label{piolaTransform}
\mathcal{J}_{\zeta}\bv
=
\big(\nabx  \bm{\Psi}_{\zeta}(\mathrm{det}\nabx  \bm{\Psi}_{\zeta})^{-1}
\bv
\big)\circ \bm{\Psi}_{\zeta}^{-1}
\end{align}
of a vector field $\mathbf{v}:\mathcal O\rightarrow\R^2$  with respect to a mapping $\zeta:\Gamma \rightarrow \mathbb{R}$. The Piola transform is invertible with inverse
\begin{align}
\label{piolaTransformInverse}
\mathcal{J}_{\zeta}^{-1}\bv
=
\big((\nabx  \bm{\Psi}_{\zeta})^{-1}(\mathrm{det}\nabx  \bm{\Psi}_{\zeta}) 
\bv
\big)\circ \bm{\Psi}_{\zeta} 
.
\end{align}
Compared to the Hanzawa transform $\bfPsi_\zeta$ given in \eqref{map},
it preserves in addition to vanishing boundary values also the divergence-free property of a function.
In order to compensate for the additional factor $(\mathrm{det}\nabx  \bm{\Psi}_{\zeta})^{-1}
$ in the trace of $\mathcal{J}_{\zeta}\bv$, we define the mapping
\begin{align*}
\iota_\zeta \phi:=(\mathrm{det}\nabx  \bm{\Psi}_{\zeta}
)^{-1}
\phi
\end{align*}
for a function $\phi:\Gamma\rightarrow\R$. If $\bm{\phi}\circ \bm{\varphi}=\phi\bn$ on $\Gamma$ it follows that $(\mathcal J_\eta\bfphi)\circ \bm{\varphi}_\eta=\iota_\eta\phi\bn$ on $\Gamma$.
Thus a pair of test-functions $(\phi,\bfphi)$ on the reference domain with the correct boundary condition leads to a pair of test-functions $(\iota_\eta\phi,\mathcal J_{\eta(t)} \bm{\phi})$ on the moving domain with the right boundary condition. Also, a dense set of test-functions on the reference domain  leads to a dense set of test-functions on the moving domain, see \cite[page 237]{lengeler2014weak}, provided $\eta$ is sufficiently smooth. We can now proceed to state the precise definition of a solution of \eqref{divfree}.
\begin{definition}[Weak martingale solution]
\label{def:weakMartSolution}
Let $T>0$ and  $(\sigma_k)_{k\in \mathbb{Z}}$ be given by \eqref{ansatz}.
Let $(\bff, g,\eta_0, \eta_\star,  \bu_0)$ be a dataset such that
\begin{equation}
\begin{aligned}
\label{datasetStrongSol}
&\bff\in L^2(I;L^2_{\mathrm{loc}}(\mathbb{R}^2)), \quad g\in L^2(I;L^2(\Gamma)),
\\&
\eta_0 \in W^{2,2}(\Gamma ) \text{ with } \Vert \eta_0 \Vert_{L^\infty( \Gamma )} < L, \quad
\eta_\star \in L^{2}(\Gamma ),  
\\
&\bu_0\in L^{2}_{\mathrm{\divx}}(\mathcal{O}_{\eta_0} ) \text{ is such that }
 \bu_0 \circ\bm{\varphi}_{\eta_0}   =\eta_\star \bn \text{ on } \Omega\times \Gamma.
\end{aligned}
\end{equation} 
We call $((\Omega,\mathfrak{F},(\mathfrak{F}_t)_{t\geq0},\mathbb{P}),\eta,\bu,(W^k)_{k\in\mathbb Z})$
a \textit{weak martingale solution}  of \eqref{divfree} with  data $(\bff,g,\eta_0, \eta_\star,  \bu_0)$ provided that the following holds:
\begin{itemize}
\item[(a)]  $(\Omega,\mathfrak{F},(\mathfrak{F}_t)_{t\geq0},\mathbb{P})$  is a stochastic basis;
\item[(b)] $(W^k)_{k\in\mathbb Z}$ is a sequence of $(\mathfrak{F}_t)_{t\geq0}$-Brownian motions with \eqref{qv};
\item[(c)] $ \eta$ is $(\mathfrak{F}_t)$-adapted with 
\begin{align*}
&\eta \in L^{\infty} (I;W^{2,2}(\Gamma ))\cap
L^{2} (I;W^{2+s,2}(\Gamma ))
,
\qquad
\partial_t\eta  \in  C_w (\overline{I};L^{2}(\Gamma ))  \qquad \text{a.s.}
\end{align*} 
for all $s\in(0, 1/2)$;
\item[(d)] the velocity $\bu$ is $(\mathfrak{F}_t)$-adapted with $\bu \circ\bm{\varphi}_{\eta}   =\partial_t \eta  \bn \text{ on } I\times \Gamma$ and
\begin{align*}
&\bu \in L^{2} (I;W^{1,2}(\Oeta ))
\cap  C_w (\overline{I};L^{2}_{\divx}(\Oeta ))  \qquad \text{a.s.};
\end{align*} 
\item[(e)] the equation\footnote{See Section \ref{sec:strat} above for the Stratonovich-to-It\^o map.}
\begin{align*}
\dd\bigg(\int_{\Gamma}\partial_t\eta \,\iota_\eta\phi\dy
+
\int_{\Oeta}\bu\cdot\mathcal{J}_\eta\bm{\phi}\dx
\bigg)
=
\int_{\Gamma}\big( \partial_t\eta \, \partial_t(\iota_\eta\phi )
-
 \naby^2\eta \,\naby^2(\iota_\eta\phi )
 +g\,\iota_\eta\phi
\big)\dy\dt
\\
+\int_{\Oeta}\big(\bu\cdot\partial_t(\mathcal{J}_\eta\bm{\phi}
)+
 \bu\otimes \bu:\nabx\mathcal{J}_\eta\bm{\phi}
-
\nabx\bu:\nabx\mathcal{J}_\eta\bm{\phi}+\bff\cdot\mathcal{J}_\eta\bm{\phi}\big)\dx\dt
\\
+ 
\gamma
 \sum_{k\in\mathbb{Z}} \int_{\Gamma}\partial_t\eta  \,
 \sigma_k\,\naby ( \overline{\sigma}_k\,\naby (\iota_\eta\phi ))
\dy\dt
-
\sqrt{\gamma}
 \sum_{k\in\mathbb{Z}}  \int_{\Gamma}\partial_t\eta \sigma_k\,\naby (\iota_\eta\phi )
\dy\dd W_s^k
\end{align*} 
holds a.s. for all $(\phi,\bm{\phi})\in W^{2,2}(\Gamma)\times W^{1,2}_{\divx}(\mathcal{O})$ with $\bm{\phi}\circ\bm{\varphi}=\phi\bn$  on $\Gamma$;
\item[(f)] the inequality
\begin{align*} 
 \Vert\partial_t\eta (t)\Vert_{L^2(\Gamma)}^2&+ \Vert \naby^2\eta(t)\Vert_{L^2(\Gamma)}^2
+\Vert\bu(t)\Vert_{L^2(\Oeta)}^2 + \int_0^t \Vert \nabx\bu \Vert_{L^2(\Oeta)}^2 \ds
\\
\lesssim&
 \Vert \eta_\star \Vert_{L^2(\Gamma)}^2+  \Vert \naby^2\eta_0 \Vert_{L^2(\Gamma)}^2
+\Vert\bu_0\Vert_{L^2(\mathcal{O}_{\eta_0})}^2
\\&+
 \int_0^t \Vert g \Vert_{L^2(\Gamma)}^2 \ds
 +
  \int_0^t \Vert \bff\Vert_{L^2(\Oeta)}^2 \ds
\end{align*}
holds a.s. for a.a. $t\in I$
\end{itemize}
\end{definition}  
%
The existence of weak martigale solution for the system \eqref{divfree} in the sense of Definition \ref{def:weakMartSolution} was recently shown in \cite{BrMeMo}.

\subsection{The main result}
We are finally in the position to present the statement of the  main result of this paper.

\begin{theorem}\label{thm:main}
Let $\kappa\in(0,1)$ and $T>0$ be given and assume that the dataset $(\bff, g,\eta_0, \eta_\star,  \bu_0)$ satisfies \eqref{dataset} and \eqref{dataset'}. Then there are $\gamma>0$ and $N\in\mathbb N$ such that any weak martingale solution $((\Omega,\mathfrak{F},(\mathfrak{F}_t)_{t\geq0},\mathbb{P}),\eta^N,\bu^N,(W^k)_{|k|\leq N})$ of \eqref{divfree} in the sense of Definition \ref{def:weakMartSolution} with data $(\bff,g,\eta_0, \eta_\star,  \bu_0)$ satisfies
\begin{align*}
{\mathbb{P}}\big(\Vert \eta^N\Vert_{L^\infty((0,T)\times\Gamma)}<L\big)\geq 1-\kappa,
\end{align*}
{\emph i.e.}, with probability $1-\kappa$ there is no contact until the time $T$.
\end{theorem}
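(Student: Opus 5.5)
The plan is a scaling-limit argument of the kind used in \cite{Fl3}. We fix $\gamma$ first, using the deterministic target problem \eqref{detNSF}, and then send $N\to\infty$. The goal is to show that $\eta^N$ converges in law to the unique strong solution $\overline\eta$ of \eqref{detNSF}, and that this convergence holds in a topology controlling $\|\cdot\|_{L^\infty((0,T)\times\Gamma)}$.

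\textbf{Step 1: choice of $\gamma$.} By Theorem \ref{thm:Br}, the data \eqref{dataset'} yield a unique strong solution of \eqref{detNSF}. It lives up to the first time of contact or degeneracy. We want to choose $\gamma$ so large that this solution exists on all of $(0,T)$ with $\|\overline\eta\|_{L^\infty((0,T)\times\Gamma)}\le L-2\delta$ for some $\delta>0$. The energy inequality \eqref{energyEst} gives $\gamma\int_0^T\|\partial_t\partial_y\overline\eta\|_{L^2}^2\lesssim C(\text{data})$. Hence $\partial_t\overline\eta$ is small in $L^2(I;W^{1,2})$ for large $\gamma$, and
\[
\|\overline\eta(t)-\eta_0\|_{L^\infty(\Gamma)}\lesssim \sqrt{T}\,\|\partial_t\overline\eta\|_{L^2(I;W^{1,2}(\Gamma))}\lesssim \sqrt{T/\gamma}.
\]
Since $\|\eta_0\|_{L^\infty}<L$, this keeps $\overline\eta$ uniformly away from $L$. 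A similar argument, using the $L^\infty(I;W^{2,2})$ bound and interpolation, keeps $\partial_y\bfvarphi_{\overline\eta}$ and $\bfn\cdot\bfn_{\overline\eta}$ away from degeneracy. The maximal existence time is therefore $T$. This step is soft: the a priori bound is independent of the existence interval, so a continuation argument works.

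\textbf{Step 2: tightness and identification of the limit.} With $\gamma$ fixed, the pathwise energy inequality (f) gives bounds uniform in $N$ and in $\omega$. Together with the fractional estimate $\eta^N\in L^2(I;W^{2+s,2})$ and an Aubin--Lions argument via Theorem \ref{thm:auba}, applied as in \cite{BrMeMo}, this gives tightness of the laws of $(\eta^N,\bu^N)$. In particular $\eta^N$ is tight in $C(\overline I\times\Gamma)$, by the bounds in $W^{1,\infty}(I;L^2)\cap L^\infty(I;W^{2,2})$. Skorokhod--Jakubowski then gives a.s.\ convergence on a new probability space.

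The key point is that the martingale term vanishes. For fixed test functions, its quadratic variation is
\[
\gamma\sum_{N\le|k|\le2N}|\sigma_k|^2\,\Big|\int_\Gamma\partial_t\eta\,e^{ik y}\naby(\iota_\eta\phi)\,\dy\Big|^2 .
\]
Since $\sum_k|\sigma_k|^2\approx N^{-1}$ while the Fourier coefficients are bounded by the energy, this is $\lesssim \gamma N^{-1}$. The Itô correction equals $\gamma\naby^2\partial_t\eta$ identically, so in the limit we obtain a weak solution of \eqref{detNSF} in the sense of Definition \ref{def:weakSolution}.

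\textbf{Step 3: uniqueness and conclusion.} This is where I expect the main obstacle. Weak solutions of \eqref{detNSF} are not known to be unique, so we apply the weak-strong uniqueness principle of \cite{BMSS35}. It applies because $\eta$ is Lipschitz in 2D. This requires the limit weak solution to be admissible as a test function against the strong solution, and that in turn requires an improved fractional regularity of the structure (the estimate \eqref{eq:13.05B}), uniform in $N$. I would try to prove this by testing the equation with difference quotients of $\eta^N$, extended into the moving domain by Corollary \ref{cor:3.5}, and by controlling the resulting commutators with the boundary regularity.

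Once the limit is identified as $\overline\eta$ deterministically, the Gyöngy--Krylov lemma upgrades convergence in law to convergence in probability of $\eta^N\to\overline\eta$ in $C(\overline I\times\Gamma)$ on the original spaces. Then
\[
\mathbb P\big(\|\eta^N\|_{L^\infty}\ge L\big)\le \mathbb P\big(\|\eta^N-\overline\eta\|_{L^\infty}\ge2\delta\big)<\kappa
\]
for $N$ large. Two technical points need care here. The solutions are defined only up to contact, so everything should be localized by a stopping time at $\|\eta^N\|_\infty\ge L-\delta$. And the claim is for any martingale solution, so the convergence argument must be phrased along arbitrary sequences of solutions and concluded by contradiction.
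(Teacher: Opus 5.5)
There is a genuine gap. The overall architecture is the paper's: fix $\gamma$ via the deterministic target, use tightness and Skorokhod, show the martingale vanishes, then apply weak--strong uniqueness from \cite{BMSS35} and pass to convergence in probability. However, the claim at the end of your Step~2 is not justified, and the step you flag as the main obstacle is attacked at the wrong level.

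The limit equation \eqref{eq.28.09B} contains the dissipation only in the very weak form $\gamma\int_\Gamma\partial_t\overline\eta\,\naby^2\phi\dy$. No $N$-uniform bound gives $\naby\partial_t\overline\eta\in L^2$; you only have \eqref{energyGal} and the trace bound \eqref{GalerkinConvEta2Np}. More importantly, the energy inequality \eqref{energyEst} with the term $\gamma\int_0^t\Vert\naby\partial_t\overline\eta\Vert_{L^2(\Gamma)}^2\ds$ cannot be inherited from the approximations, yet it is required by Definition \ref{def:weakSolution} and by the relative-energy argument behind weak--strong uniqueness. The transport noise is energy conservative, Definition \ref{def:weakMartSolution}(f) contains no dissipation, and lower semicontinuity yields a limit energy inequality without the $\gamma$-term.

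For the same reason, your plan to prove \eqref{eq:13.05B} uniformly in $N$ by testing the $N$-level equation with (difference quotients of) half derivatives of $\eta^N$ has no mechanism behind it. That is the technique of \cite{MuSc,BrMeMo}, which stops at $L^2(I;W^{2+s,2}(\Gamma))$ with $s<1/2$. In the endpoint estimate, the term $I_4$ in \eqref{galerkinweak1A}, which contains $\partial_t\mathscr F^{\overline\eta}(D^{\mathscr K}_{\mathscr N}\overline\eta)$, is controlled only through $\partial_t\overline\eta\in L^2(I;W^{1,2}(\Gamma))$. That bound has no $N$-uniform counterpart. Note also that the endpoint regularity is needed to test the limit equation with the limit itself, not to use the limit against the strong solution.

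The missing idea is to extract all of this from the limit equation, where the It\^o correction has become genuine dissipation. The paper proceeds in three steps:
\begin{enumerate}
\item It tests \eqref{eq.28.09B} with Steklov averages $((\partial_t\overline\eta)_h,(\overline{\bu})_h)$ and obtains \eqref{eq:higher}, i.e.\ $\overline\eta\in W^{1,2}(I;W^{1,2}(\Gamma))$.
\item It tests with $(\mathscr F^{\overline\eta}(D^{\mathscr K}_{\mathscr N}\overline\eta),D^{\mathscr K}_{\mathscr N}\overline\eta)$, a corrected half derivative on finitely many Fourier modes. This reaches the endpoint $\overline\eta\in L^2(I;W^{5/2,2}(\Gamma))$ and hence $\overline\eta\in L^4(I;W^{2,4}(\Gamma))$.
\item This regularity is exactly what makes $\mathcal J_{\overline\eta}$ bounded on $L^\infty(I;L^2)\cap L^2(I;W^{1,2})$. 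So the Piola-transformed equation can be tested with the solution itself, giving the energy equality with the $\gamma$-dissipation.
\end{enumerate}
Only after this is the limit a weak solution in the sense of Definition \ref{def:weakSolution}, and only then does \cite{BMSS35} apply.

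Three smaller corrections:
\begin{enumerate}
\item With \eqref{ansatz} one has $\sum_k|\sigma_k|^2=1$. This is exactly what makes the It\^o correction equal $\gamma\naby^2\partial_t\eta$, which you use yourself. The martingale vanishes because $\max_k|\sigma_k|^2\approx N^{-1}$, combined with Bessel's inequality for the Fourier coefficients, not because $\sum_k|\sigma_k|^2\approx N^{-1}$.
\item In Step~1 the energy makes only $\naby\partial_t\overline\eta$ small. You therefore need a Poincar\'e inequality based on the (weighted) zero mean of $\partial_t\overline\eta$, which comes from incompressibility and the kinematic condition.
\item Since the limit is deterministic, convergence in law already gives $\limsup_N\mathbb P(\Vert\eta^N\Vert_{L^\infty((0,T)\times\Gamma)}\geq L)=0$ by the portmanteau theorem. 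This handles your ``any martingale solution'' concern without Gy\"ongy--Krylov.
\end{enumerate}
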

The proof of Theorem \ref{thm:main} can be found in  the subsequent two sections and will be completed in Section \ref{sec:no}. In a first step (Section \ref{sec:comp}) we prove that the sequence $((\Omega,\mathfrak{F},(\mathfrak{F}_t)_{t\geq0},\mathbb{P}),\eta^N,\bu^N,(W^k)_{|k|\leq N})$ converges (after taking a subsequence) in law to a solution of the deterministic problem. This is reminiscent of our analysis in \cite{BrMeMo}.
In a second step we strengthen this to convergence in probability on the original probability space (which can be assumed to be independent of $N$), see Section \ref{sec:pcon}.
Crucial tools here are, the weak strong-uniqueness property of the deterministic problem, higher differentiability of the structure function, and 
a version of the Gy\H{o}ngy-Krylov characterisation of convergence in probability.

\begin{remark}
It is currently open if strong solutions to the stochastic fluid-structure interaction problem \eqref{divfree} exist even if additional dissipation in the structure is allowed.
The corresponding deterministic results, cf.~\cite{breit2022regularity,BMSS, MR3466847, ScSu} are based on temporal derivatives. Such an approach cannot be replicated in the stochastic setting. Hence it is unclear if a stochastic counterpart can be achieved.
Consequently, our main result is formulated for martingale solutions.
\end{remark}
%
%

\section{Stochastic compactness}
\label{sec:comp}
Let $((\Omega,\mathfrak{F},(\mathfrak{F}_t)_{t\geq0},\mathbb{P}),\eta^N,\bu^N,(W^k)_{|k|\leq N})$ be a weak martingale solution of \eqref{divfree} in the sense of Definition \ref{def:weakMartSolution} with data $(\bff,g,\eta_0, \eta_\star,  \bu_0 )$. Then it satisfies the following energy estimate 
\begin{equation}
\begin{aligned}
\label{energyGal}
\sup_{t\in I}\Big( \Vert\partial_t\eta ^N(t)\Vert_{L^2(\Gamma)}^2&+  \Vert \naby^2\eta^N(t)\Vert_{L^2(\Gamma)}^2
+
\Vert \bu^N(t) \Vert_{L^2(\mathcal{O}_{\eta^N})}^2
\Big)
+ 
 \int_I \Vert \nabx\bu^N \Vert_{L^2(\mathcal{O}_{\eta^N})}^2
\dt
\\
\lesssim&
 \Vert \eta_\star \Vert_{L^2(\Gamma)}^2+  \Vert \naby^2\eta_0 \Vert_{L^2(\Gamma)}^2
+\Vert\bu_0\Vert_{L^2(\mathcal{O}_{\eta_0})}^2
\\&+
 \int_I \Vert g \Vert_{L^2(\Gamma)}^2 \dt
 +
  \int_I\Vert \bff\Vert_{L^2(\mathcal{O}_{\eta^N})}^2 \dt
\end{aligned}
\end{equation}
uniformly in $N\in\mathbb{N}$. From this estimate, it follows from the interface condition $\bu^N\circ\bm{\varphi}_{\eta^N } =\bn\partial_t\eta^N $, \eqref{energyGal} and the trace theorem that
\begin{equation}
\begin{aligned}
\label{GalerkinConvEta2Np}
\int_I\Vert \partial_t\eta^N \Vert_{W^{1/2,2}(\Gamma)}^2\dt
\lesssim 1
\end{aligned}
\end{equation}
holds.  Note $\partial\mathcal{O}_{\eta^N}$ is Lipschitz continuous uniformly in time and $N$ because of \eqref{energyGal} and the one dimensional Sobolev embedding $W^{2,2}(\Gamma)\hookrightarrow W^{1,\infty}(\Gamma)$. Based on these estimates we aim to pass to the limit in the coupled momentum-structure equation. Therefore we apply the stochastic compactness method.
\subsection{Tightness}
Due to the moving boundary of the fluid's domain
the tightness of the velocity field in a strong topology is very delicate and will be ultimately achieved by Proposition \ref{prop:musc} using the solenoidal extension $\mathscr F^{\eta^N}$. Following \cite{lengeler2014weak}, we write
\begin{align}\label{eq:convrhouN}\begin{aligned}
\int_{I}\int_{\mathcal O_{\eta^N}}|\bu^N|^2\dxt&
+\int_{I}\int_\Gamma|\partial_t\eta^N|^2\,\dy \dt
\\&
=\int_{I}\int_{\mathcal O\cup S_{\alpha}}\mathbb I_{\mathcal O_{\eta^N}}
\bu^N\cdot\mathscr F^{\eta^N}( \partial_t\mathcal \eta^N)\dxt+\int_{I}\int_\Gamma|\partial_t\eta^N|^2\,\dy \dt
\\
&+\int_{I}\int_{\mathcal O\cup S_{\alpha}}
\mathbb I_{\mathcal O_{\eta^N}}
\bu^N\cdot(\bu^N-\mathscr F^{\eta^N}(\partial_t\mathcal \eta^N))\dxt 
\end{aligned}
\end{align} 
and consider the space $X:=L^2(\Gamma)\times W^{-s,2}(\mathcal{O}\cup S_{\alpha})$ with  $s\in(0,1/4)$.  
In order to apply Theorem~\ref{thm:auba} yielding tightness of the corresponding laws,
we need to equip $L^2(I;X\times X')$ with an unconventional topology which we denote by  $\tau_\sharp$
and define the convergence $\rightarrow^\sharp$  as follows. 
 We say that
$$((\zeta^N, \bv^N),(\xi^N,\mathbf{w}^N))
\rightarrow^\sharp((\zeta, \mathbf v),(\xi, {\mathbf w}))\,\,\text{in}\,\,L^2(I;X\times X')$$ provided
that
$$((\zeta^N, \bv^N),(\xi^N,\mathbf{w}^N))
 \rightharpoonup ((\zeta, \mathbf v),(\xi, {\mathbf w}))\,\,\text{in}\,\,L^2(I;X\times X')$$
and it holds
\begin{align}\label{eq:tau}
\begin{aligned} 
\int_I\langle \bv^N,\mathbf{w}^N\rangle_{W^{-s,2},W^{s,2}}\dt
&+\int_I\int_\Gamma\zeta^N\,\xi^N\dy\dt
\\
&\longrightarrow \int_I\langle \mathbf v,{\mathbf w}\rangle_{W^{-s,2},W^{s,2}}\dt
+\int_I\int_\Gamma\zeta\,\xi\dy\dt.
\end{aligned}
\end{align}
Since this topology is finer than the weak topology on $L^2(I;X\times X')$, it is clear
 that $(L^2(I;X\times X'),\tau_\sharp)$ is a quasi-Polish
space such that Jakubowski's version of the Skorokhod representation theorem
applies. Now we prove the required tightness for $(\eta^N,\bu^N)$. 
\begin{lemma}\label{lemma:fg'}
The laws of
\begin{align*}
\big((\partial_t\eta^N,\mathscr F^{\eta^N}(\partial_t\eta^N)),(\partial_t\eta^N,\mathbb I_{\mathcal O_{\eta^N}}\mathbf u^N)\big)\quad\text{and}\quad
\big((0,\mathbf u^N-\mathscr F^{\eta^N}(\partial_t\eta^N)), (\partial_t\eta^n,\mathbb I_{\mathcal O_{\eta^N}}\mathbf u^N)
\big)
\end{align*}
on $(L^2(I;X\times X'),\tau_\sharp)$ are tight.
\end{lemma}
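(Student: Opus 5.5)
Tightness in $(L^2(I;X\times X'),\tau_\sharp)$ needs two things. First, boundedness in probability in $L^2(I;X\times X')$, which gives weak relative compactness via balls that are weakly compact and metrisable on bounded sets. Second, a uniform-in-probability version of the convergence \eqref{eq:tau}. I would therefore follow the strategy of \cite{lengeler2014weak,BrMeMo}. For a given $\varepsilon>0$ I restrict to events of probability at least $1-\varepsilon$ on which all the deterministic hypotheses of Theorem~\ref{thm:auba} hold with constants depending only on $\varepsilon$. On such an event, any sequence of realisations has a subsequence converging in $\tau_\sharp$. The set of such paths is thus relatively compact in $\tau_\sharp$, and its complement has measure at most $\varepsilon$, which is exactly tightness.

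\textbf{Boundedness.} From \eqref{energyGal}, \eqref{GalerkinConvEta2Np} and \eqref{musc0}, \eqref{musc1}, the quantities $\partial_t\eta^N$ in $L^2(I;W^{1/2,2}(\Gamma))\cap L^\infty(I;L^2(\Gamma))$, $\mathscr F^{\eta^N}(\partial_t\eta^N)$ in $L^2(I;W^{1,2})\cap L^\infty(I;L^2)$, and $\mathbb I_{\mathcal O_{\eta^N}}\bu^N$ in $L^\infty(I;L^2)\cap L^2(I;W^{s,2}(\mathcal O\cup S_\alpha))$ are bounded pathwise, uniformly in $N$. The last bound uses that the zero extension of a $W^{1,2}$ function across a uniformly Lipschitz boundary is in $W^{s,2}$ for $s<1/2$. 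Here the data are deterministic, so these bounds are even almost sure, not just in expectation. Thus (a) and (d) of Theorem~\ref{thm:auba} hold with $\mathfrak s=2$, $f_N$ the first component and $g_N$ the second component. For (d), I take $Z'$ to be $W^{-1/2,2}(\Gamma)\times W^{-1,2}$, say, so that $X'=L^2\times W^{s,2}\hookrightarrow\hookrightarrow Z'$. For the second pair, $\bu^N-\mathscr F^{\eta^N}(\partial_t\eta^N)$ has zero trace on $\partial\mathcal O_{\eta^N}$. It is bounded in $L^2(I;W^{1,2}_0(\mathcal O_{\eta^N}))$, hence in $L^2(I;X)$.

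\textbf{Approximability (b).} I take $f_{N,\kappa}$ to be a spatial mollification at scale $\kappa$ of the $f_N$ components. For the fluid part of the second pair I instead use the squeezing construction of \cite{lengeler2014weak,MuSc}: compose with a dilation of $\mathcal O_{\eta^N}$ slightly inwards and mollify, so that the result stays compactly supported in $\mathcal O_{\eta^N-\kappa}$ and remains divergence-free. The $W^{1,2}$-bounds give $\|f_N-f_{N,\kappa}\|_{L^2(I;X)}\lesssim\kappa^{1-s}$ (respectively $\kappa^{1/2-s}$) and $\|f_{N,\kappa}\|_{L^1(I;Z)}\le C(\kappa)$ for smooth $Z$.

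\textbf{Equicontinuity (c).} This is the main obstacle. I would test the weak formulation (e) of Definition~\ref{def:weakMartSolution} with the pair $(\iota\phi,\bfphi)$ built from $f_{N,\kappa}(t)$ via the extension $\mathscr F^{\eta^N}$ and Corollary~\ref{cor:3.5}. This test pair is admissible, i.e. has matching traces, on the time window $[t,t+s]$. Since $\eta^N$ moves, the domain must be controlled on that window: by $\partial_t\eta^N\in L^\infty(L^2)$ and $\eta^N\in L^\infty(W^{2,2})$, $\eta^N$ is $1/4$-Hölder in time with values in $L^\infty$. For $\tau$ small depending on $\kappa$, a function supported in $\mathcal O_{\eta^N(t)-\kappa}$ therefore stays inside $\mathcal O_{\eta^N(t+s)}$. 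The difference $g_N(t)-g_N(t+s)$ paired with $f_{N,\kappa}(t)$ is then a time integral of the drift terms, bounded by $C(\kappa)\tau^{1/2}(A_N(t)+1)$ with $A_N$ built from $\|\nabx\bu^N\|_{L^2}^2$, $\|\bu^N\|_{L^4}^2$, $\|\partial_t\eta^N\|^2_{W^{1/2,2}}$, $g$ and $\bff$. The time derivatives of the test functions are controlled by \eqref{musc2}, and the $\gamma$-term by $\|\partial_t\eta^N\|_{L^2}\|\phi\|_{W^{2,2}}$. The stochastic integral is the delicate part, since its coefficient $\sqrt\gamma\sigma_k$ depends on $N$ and is nonsmooth in time. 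I would bound its $\tau^{\alpha}$-Hölder norm in expectation by the Burkholder--Davis--Gundy inequality and Kolmogorov continuity. The key input is $\sum_k|\sigma_k|^2=1$ and $\|\partial_t\eta^N\|_{L^\infty L^2}\lesssim1$, which give $\mathbb E\|M\|^p_{C^\alpha}\le C(\kappa)$ uniformly in $N$. Chebyshev then gives an event of probability at least $1-\varepsilon$ on which (c) holds with the extra constant depending on $\varepsilon$. On these events Theorem~\ref{thm:auba} yields the convergence \eqref{eq:tau}, and hence tightness of both laws.
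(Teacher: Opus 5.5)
Your argument has the same architecture as the paper's proof. You verify (a)--(d) of Theorem~\ref{thm:auba} with $\mathfrak s=2$, regularise the boundary datum $\partial_t\eta^N$ and extend it via Corollary~\ref{cor:3.5}, and derive equicontinuity from the weak formulation. The motion of the extension between $t$ and $t+\tau$, controlled by \eqref{musc2}, is the paper's term $({\tt II})$. The noise is handled by a second-moment bound resting on $\sum_k|\sigma_k|^2=1$ and the pathwise energy estimate.

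Two ingredients are implemented differently, and in both places your version closes a point the paper leaves open.
\begin{itemize}
\item For the stochastic integral, the paper only shows $\mathbb P(|t-s|^{-1/2}|\mathscr M^N_s-\mathscr M^N_t|\geq K)\lesssim\kappa^{-1}K^{-2}$ for each fixed pair $(t,s)$. That alone does not give one event on which (c) holds for a.e.\ $t$ and all $\tau$. Your BDG--Kolmogorov bound on a H\"older norm, followed by Chebyshev, does.
\item For the second pair, the paper approximates $\bu^N-\mathscr F^{\eta^N}(\partial_t\eta^N)$ by fields in $C^\infty_{c,\Div}(\mathcal O_{\eta^N(t)})$ and then regularises in time. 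It never explains why these fields stay supported in $\mathcal O_{\eta^N(r)}$ for $r\in[t,t+\tau]$. Your squeeze into $\mathcal O_{\eta^N(t)-\kappa}$ settles exactly this, combined with the uniform time-H\"older continuity of $\eta^N$ in $L^\infty(\Gamma)$ and the trivial bound for $\tau$ above a $\kappa$-dependent threshold. Since the structure component vanishes, there is also no noise term for this pair.
\end{itemize}
The price is that the squeeze must be a Piola transform along $\bfPsi_{\eta-\kappa}\circ\bfPsi_\eta^{-1}$. It is applied to the zero extension, which is solenoidal in $\mathcal O\cup S_\alpha$ because its normal trace vanishes. A homothety preserves solenoidality but maps $\mathcal O_{\eta^N}$ into itself only for star-shaped domains, and composition with a general map does not preserve $\Div\bu=0$.

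Three points need tightening.
\begin{itemize}
\item In (b) you regularise the first pair by mollifying \emph{both} components. The fluid component must instead be $\mathcal E^{\eta^N}_\kappa(\partial_t\eta^N)$, the extension of the regularised datum, as in the paper and as you in fact use in (c). A mollified $\mathscr F^{\eta^N}(\partial_t\eta^N)$ is not an admissible test function, and its distance to one is small in $\kappa$ but not $O(\tau^\alpha)$, so (c) would fail for it.
\item \eqref{musc1} does not give $\mathscr F^{\eta^N}(\partial_t\eta^N)\in L^2(I;W^{1,2})$, nor a uniform $L^2(I;W^{1,2}_0)$ bound for $\bu^N-\mathscr F^{\eta^N}(\partial_t\eta^N)$, because $\partial_t\eta^N$ is only bounded in $L^2(I;W^{1/2,2}(\Gamma))$. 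This is harmless: the $W^{1/2,2}$ bound for the structure component and $L^\infty(I;L^2)$ bounds for the fluid components already give positive rates in (b). For example, the squeezed field converges at rate $\kappa^{s}$ in $W^{-s,2}$, by duality.
\item The stochastic term paired with $f_{N,\kappa}(t)$ is not the increment of a single martingale, since its integrand contains $(\partial_t\eta^N(t))_\kappa$. Using the Fourier projection, write $(\partial_t\eta^N(t))_\kappa=\sum_{|j|\le n_\kappa}c_j(t)e^{\mathfrak i jy}$ with $|c_j(t)|\lesssim1$. Then apply Kolmogorov to the finitely many martingales $r\mapsto\sqrt\gamma\sum_k\int_0^r\int_\Gamma\partial_t\eta^N\sigma_k\,\partial_y e^{\mathfrak i jy}\dy\,\dd W^k$, whose H\"older norms have moments bounded uniformly in $N$. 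Finally, choose the good events along a sequence $\kappa_j\to0$ with exceptional probabilities summing to at most $\varepsilon$.
\end{itemize}
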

\begin{proof}
In both cases we must verify the assumptions (a)--(d) from Theorem \ref{thm:auba}, where we choose $\mathfrak s=2$, $X:=L^2(\Gamma)\times W^{-s,2}(\mathcal{O}\cup S_{\alpha})$ for $0<s<1/4$ and $Z:=W^{s_0,2}(\Gamma)\times W^{s_0,2}(\mathcal{O}\cup S_{\alpha})$ with $s<s_0<1/4$.

\textbf{Tightness for $(\partial_t\eta^N,\mathscr F^{\eta^N}(\partial_t\eta^N))$.} 
Assumption~$(a)$.
 First of all, boundedness of
\begin{align*}
(f_N,g_N):=((\partial_t\eta^N,\mathscr F^{\eta^N}(\partial_t\eta^N)),(\partial_t\eta^N,\mathbb I_{\mathcal{O}_{\eta^N}}\bu^N))
\end{align*}
 in $L^2(I;X\times X')$
 follows from \eqref{energyGal}--\eqref{GalerkinConvEta2Np} and the properties of the extension $\mathscr F^{\eta^N}$ from Proposition \ref{prop:musc}.

Assumption~$(b)$.  
We consider the regularisation of $\partial_t\eta^N$ with parameter $\kappa>0$
 and set for $N\in\mathbb N$
\[
f_{N,\kappa}(t):=((\partial_t\eta^N(t))_\kappa,
\mathcal E_\kappa^{\eta^N}(\partial_t\eta^N(t))),
\]
where $\mathcal E_\kappa^{\eta^N}$ is given in Corollary \ref{cor:3.5}. The regularisation of $\partial_t\eta^N(t)$ can be achieved, for instance, by projecting on finitely many Fourier modes.
We find by the continuity of the mollification operator 
and the continuity of $\mathcal E_\kappa^{\eta^N}$ from 
Corollary \ref{cor:3.5} that for a.e.\ $t\in (0,T)$ and $s\leq 1/2$,
 \begin{align*}
 \|f_{N,\kappa}-f_N\|_{L^2(\Gamma)\times L^{2}(\mathcal{O}\cup S_{\alpha})}
\leq \,c\kappa^{s}\|\partial_t\eta^N\|_{W^{s,2}(\Gamma)},
 \end{align*}
 which can be made arbitrarily small in $L^2(I)$ by choosing $\kappa$ appropriately, 
cf. \eqref{GalerkinConvEta2Np}.
Similarly, we have
 \begin{align*}\label{eq:fNkappa}
 \|f_{N,\kappa}\|_{W^{1,2}(\Gamma)\times W^{1,2}(\mathcal{O}\cup S_{\alpha})}
\leq \,c\kappa^{-1}\|\partial_t\eta^n\|_{L^{2}(\Gamma)}
 \end{align*}
and thus we can choose $Z:=W^{s_0,2}(\Gamma)\times W^{s_0,2}(\mathcal{O}\cup S_{\alpha})$ with $s<s_0<1/4$.
Clearly, if $f_N\rightharpoonup f$ in $L^2(I;X)$ then
 we can deduce a converging subsequence such that $f_{N,\kappa}\weakto f_{\kappa}$ (for some $ f_{\kappa}$) in 
$L^2(I;X)$ for any $\kappa>0$,
 which implies (b).

Assumption~$(c)$. We have to control
$\langle f_{N,\kappa}(t), g_N(t)-g_N(s) \rangle$, 
where $g_N:=(\partial_t\eta^N,\mathbb I_{\mathcal{O}_{\eta^N}}\bu^N)$
and hence decompose 
\begin{align*}
&\langle f_{N,\kappa}(t), g_N(t)-g_N(s)\rangle
\\
&=\Big(\big\langle \big((\partial_t\eta^N(t))_\kappa,
\mathcal E_\kappa^{\eta^N(t)}(\partial_t\eta^N(t))\big),
 g_N(t)
\big\rangle
\\
&\quad-\big\langle \big((\partial_t\eta^N(t))_\kappa,\mathcal E_\kappa^{\eta^N(s)}
(\partial_t\eta^N(t))\big), g_N(s) \big\rangle\Big)
\\
&\quad -\big\langle (0,\mathcal E_\kappa^{\eta^N(t)}(\partial_t\eta^N(t))
-\mathcal E_\kappa^{\eta^N(s)}(\partial_t\eta^N(t)),
g_N(s)
\big\rangle\\
&=:({\tt I})+({\tt II}).
 \end{align*} 
We begin estimating $({\tt II})$ to find that
 \begin{align*}
({\tt II})& = \int_{\mathcal{O}_{\eta^N(s)}} \bu^N(s)
\cdot \int_t^s\partial_r \mathcal E_\kappa^{\eta^N(r)}(\partial_t\eta^N(t))\,\dd r\dx
\\&
\leq\,c\|\bu^N(s)\|_{L^2(\mathcal{O}_{\eta^N(s)})}\|(\partial_t\eta^N)_\kappa\|_{L^\infty(I\times \Gamma)}\|\partial_t\eta^N \|_{L^\infty(I;L^{2}(\Gamma))}|t-s|
\\
&
\leq\,c(\kappa)\|\bu^N(s)\|_{L^2(\mathcal{O}_{\eta^N(s)})}\|\partial_t\eta^N\|_{L^\infty(I;L^2(\Gamma))}^2|t-s|.
 \end{align*}
using Corollary \ref{cor:3.5}. Thus,
by \eqref{energyGal}, we obtain
 \begin{align*}
|({\tt II})|
&
\leq\,c(\kappa)|t-s|.
 \end{align*}
  The term $({\tt I})$ is estimated using Definition \ref{def:weakMartSolution} (e) to obtain 
\begin{equation*}
\begin{aligned}
\int_t^s({\tt I})\,\dd r&=- \int_t^s
\int_{\Gamma} \naby^2\eta^N\, \naby^2 (\partial_t\eta^N(t))_\kappa
\dy\,\dd r\\
&\quad +
\int_t^s \int_{\mathcal{O}_{\eta^N}} \bu^N\cdot \partial_r\mathcal E_\kappa^{\eta^N(r)}
(\partial_t\eta^N(t)) 
\dx\,\dd r
\\
&\quad +
 \int_t^s\int_{\mathcal{O}_{\eta^N}} 
 {\bf u}^N\otimes  \bfu^N:\nabx\mathcal E_\kappa^{\eta^N(r)}
(\partial_t\eta^N(t) )
\dx\,\dd r
\\
&\quad -
 \int_t^s\int_{\mathcal{O}_{\eta^N}}\nabx \bfu^N:\nabx \mathcal E_\kappa^{\eta^N(r)}
(\partial_t\eta^N(t))) \dx\,\dd r
\\&\quad+
 \int_t^s\int_{\mathcal{O}_{\eta^N}}\bff\cdot\mathcal E_\kappa^{\eta^N(r)}
(\partial_t\eta^N(t)) \dx\,\dd r 
+
\int_t^s  \int_{\Gamma}g (\partial_t\eta^N )_\kappa
\dy\dd r
\\
&\quad+ 
\gamma
 \sum_{k\in\mathbb{Z}} \int_t^s\int_{\Gamma}\partial_t\eta^N  \,
 \sigma_k\,\naby ( \overline{\sigma}_k\,\naby (\partial_t\eta^N )_\kappa)
\dy\dd r
\\&\quad-
\sqrt{\gamma}
 \sum_{k\in\mathbb{Z}}\int_t^s  \int_{\Gamma}\partial_t\eta^N \sigma_k\,\naby (\partial_t\eta^N )_\kappa
\dy\dd W_r^k.
\end{aligned}
\end{equation*}
Due to  Corollary \ref{cor:3.5} and the uniform estimates \eqref{energyGal}--\eqref{GalerkinConvEta2Np},
it is clear that all terms except the last one can be estimated by 
\begin{align*}
({\tt I})&\leq C(\kappa)|t-s|^{1/2}(1+\|\eta^N\|_{C(I\times\Gamma)}+\|\nabx\mathbf u^N\|_{L^2(I\times \mathcal{O}\cup S_\alpha)})\\
&+C(\kappa)|t-s|^{1/2}\|\mathbf u^N\|_{L^2(I\times \mathcal{O}\cup S_\alpha)}^2.
\end{align*}
Note that this estimate is uniform in $N$. Denoting
$$\mathscr M^N_s:=\sqrt{\gamma}\sum_{k\in\mathbb Z}\int_0^s\int_{\Gamma}  
\partial_t\eta^N \sigma_k\,\naby (\partial_t\eta^N )_\kappa
\dy\dd W_t^k$$ 
we obtain by Markov's inequality and It\^o-isometry that for any $K>0$ (using also $\sigma_k$ is constant with $\sum_{k\in\mathbb Z}|\sigma_k|^2=1$),
\begin{align*}
\mathbb P\big(&|t-s|^{-1/2}\big|\mathscr M^N_s-\mathscr M^N_t\big|\geq K\big)
\\
&\leq\,\frac{1}{K^2}\mathbb E\bigg[|t-s|^{-1/2}\bigg|\sqrt{\gamma}\sum_{k\in\mathbb Z}\int_t^s\int_{\Gamma}  
\partial_t\eta^N \sigma_k\,\naby (\partial_t\eta^N )_\kappa \dy\dd W^k_r\bigg|^2\bigg]
\\
&=\,\frac{1}{K^2}\mathbb E\bigg[|t-s|^{-1}\gamma\sum_{k\in\mathbb Z}|\sigma_k|^2\int_t^s\bigg|\int_{\Gamma}  
 \partial_t \eta^N \naby(\partial_t\eta^N(t))_\kappa)\dy\bigg|^2\,\dd r\bigg]
\\
&\lesssim \,\frac{1}{K^2}\mathbb E\bigg[|t-s|^{-1}\int_t^s\|\partial_t\eta^N\|_{L^2(\Gamma)}^2
\|\naby(\partial_t\eta^N)_\kappa\|_{L^2(\Gamma)}^2\,\dd r\bigg]
\\
&\lesssim \,\frac{\kappa^{-1}}{K^2}\mathbb E\bigg[|t-s|^{-1}
\int_t^s\|\partial_t\eta^N\|_{L^2(\Gamma)}^2\|\partial_t\eta^N\|_{L^2(\Gamma)}^2\,\dd r\bigg]\lesssim \,\frac{\kappa^{-1}}{K^2}
\end{align*}
using \eqref{energyGal}.

Assumption~$(d)$. The last property from Theorem \ref{thm:auba} 
follows by the usual compactness in (negative) Sobolev spaces.

We conclude with the tightness of the law of \begin{align*}
\big((\partial_t\eta^N,\mathscr F^{\eta^N}(\partial_t\eta^N)), (\partial_t\eta^N,\mathbb I_{\mathcal O_{\eta^N}}\mathbf u^N)\big)
\end{align*}
on $(L^2(I;X\times X'),\tau_\sharp)$.

\textbf{Tightness for $(0,\mathbf u^N-\mathscr F^{\eta^N}(\partial_t\eta^N))$.} 
The proof follows the lines of the proof of tightness for $(\partial_t\eta^N,\mathscr F^{\eta^N}(\partial_t\eta^N))$ above. The only difference
being the regularisation of
$$f_N:=(0,\mathbf u^N-\mathscr F^{\eta^N}(\partial_t\eta^N)).$$
Assumption~(a). Boundedness of
\begin{align*}
(f_N,g_N):=((0,\bfu^N-\mathscr F^{\eta^N}(\partial_t\eta^N)),(0,\mathbb I_{\mathcal{O}_{\eta^N}}\bu^N))
\end{align*}
 in $L^2(I;X\times X')$
 follows again from \eqref{energyGal}--\eqref{GalerkinConvEta2Np} and the properties of the extension $\mathscr F^{\eta^N}$ from Proposition \ref{prop:musc}.

Assumption $(b)$. We first notice that the function
$$\widetilde{\bu}_N:=\bu^N-\mathscr F^{\eta^N}(\partial_t\eta^N)$$
belongs to $L^2(I;W_{0,\Div}^{1,2}(\mathcal{O}_{\eta^N}))$ by Proposition \ref{prop:musc}. In particular, we have
\begin{align*}
\widetilde{\mathbf u_N} \in L^2(I;L^2_{\Div}(\mathcal{O}_{\eta^N}))
\end{align*}
uniformly in $N$. Now, we first apply an approximation argument with respect to the spatial variable.
%
Since the boundary of $\mathcal{O}_{\eta^N}$ is uniformly Lipschitz continuous by \eqref{energyGal},
we can approximate $\widetilde{\mathbf u}_N(t)$ by $(\widetilde{\mathbf u}_N(t))_\kappa$, where $ (\widetilde{\mathbf u}_N(t))_\kappa \in C^\infty_{c,\Div}((\mathcal{O}_{\eta^N}))$, see \cite[Theorem III.4.1]{MR2808162}. The mapping $\bfw\mapsto (\bfw)_\kappa$ is a combination of mollification and Bogovskii-correction and thus is continuous. Hence the Bochner measurability is preserved.
Also, the regularisation parameter can be chosen such that
\begin{align*}
\|(\widetilde{\mathbf u}_N)_\kappa-\mathbf u^N\|_{W^{-s,2}(\mathcal{O}\cup S_{\alpha})}\lesssim \kappa^s,\quad \|(\widetilde{\mathbf u}_N)_\kappa\|_{W^{1,\infty}(\mathcal{O}\cup S_{\alpha})}\lesssim \|\mathbf u^N_\kappa\|_{W^{3,2}(\mathcal{O}\cup S_{\alpha})}\lesssim \kappa^{-3},
\end{align*}
uniform in time and $N$. Finally, we denote by $\mathbf u^N_\kappa$ a temporal regularisation of $(\widetilde{\mathbf u}_N)_\kappa$ which leads to a function which is also smooth in time. 
 The estimates above, the zero boundary values and the solenoidability remain unchanged by this procedure. In particular, the $W^{1,\infty}_{t,\bx}$-norm can be controlled in terms of $\kappa$ uniform in $N$.
This leads to a function $f_{N,\kappa}:=(0,\mathbf u^N_\kappa)$.

Assumption~$(c)$. The function $f_{N,\kappa}:=(0,\mathbf u^N_\kappa)$ has zero boundary conditions and thus only the fluid part is seen in
the expression $\langle f_{N,\kappa}(t), g_N(t)-g_N(s)\rangle$ 
(where $g_N:=(\partial_t\eta^N,\mathbb I_{\mathcal{O}_{\eta^N}}\mathbf u^N)$
 as above). In particular, we conclude again that
\begin{align*}
\langle f_{N,\kappa}(t), g_N(t)-g_N(s)\rangle&\leq\, C(\kappa)|t-s|^{1/2}(1+\|\eta^N\|_{C(I\times\Gamma)}+\|\nabx\mathbf u^N\|_{L^2(I\times \mathcal{O}\cup S_\alpha)})\\
&+C(\kappa)|t-s|^{1/2}\|\mathbf u^N\|_{L^2(I\times \mathcal{O}\cup S_\alpha)}^2
\end{align*}
uniformly in $N$.

Assumption~$(d)$. We use again compactness in negative Sobolev spaces.

By Theorem \ref{thm:auba} we obtain tightness of the law of
 \begin{align*}
((0,\mathbf u^N-\mathscr F^{\eta^N}(\partial_t\eta^N)), (\partial_t\eta^N,\mathbb I_{\mathcal O_{\eta^N}}\mathbf u^N))
\end{align*}
on $(L^2(I;X\times X'),\tau_\sharp)$.

\end{proof}

\subsection{Compactness}
We define the path space
\begin{align*}
\chi=\chi_W\times \chi_\eta\times \chi_{\partial_t\eta}\times \chi_\bu\times \chi_{\nabx\bu} \times \chi_{f,g}
\end{align*}
where
\begin{align*}
\chi_W&=(C(\overline{I}))^{\mathbb Z}, \quad \chi_\eta\in \big( L^\infty(I,W^{2,2}(\Gamma)),w^*\big)\cap \big( L^\infty(I,L^\infty(\Gamma))\big),\\  \chi_{\partial_t\eta}&= \big( L^\infty(I,L^{2}(\Gamma)),w^*\big)\cap L^2(I;W^{1/2,2}(\Gamma)),
\\
\chi_\bu&=\big( L^\infty(I,L^{2}(\mathcal{O}\cup S_\alpha)),w^*\big), \quad \chi_{\nabx\bu}\in \big( L^2(I,L^{2}(\mathcal{O}\cup S_\alpha)),w\big)
, \\ \chi_{f,g}&= \big( L^2(I; X\times X'),\tau_\sharp\big).
\end{align*}
With this, the uniform bounds \eqref{energyGal}--\eqref{GalerkinConvEta2Np} and Lemma \ref{lemma:fg'} at hand, we conclude that
\begin{lemma}
The joint law
\begin{align*}
\bigg\{ \mathcal{L}\bigg[(W_t^k)_{k\in\mathbb Z}, \eta^N, \partial_t\eta^N, \mathbb{I}_{\mathcal{O}_{\eta^N}}\bu^N, \mathbb{I}_{\mathcal{O}_{\eta^N}}\nabx\bu^N,
\begin{pmatrix}
((\partial_t\eta^N, \mathscr{F}^{\eta^N}(\partial_t\eta^N)), (\partial_t\eta^N, \mathbb{I}_{\mathcal{O}_{\eta^N}}\bu^N))
\\
( (0,\bu^N- \mathscr{F}^{\eta^N}(\partial_t\eta^N)), (\partial_t\eta^N, \mathbb{I}_{\mathcal{O}_{\eta^N}}\bu^N))
\end{pmatrix}
\bigg]\,;\, N\in \mathbb{N}\bigg\}
\end{align*}
is tight on $\chi$.
\end{lemma}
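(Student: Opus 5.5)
Tightness of a joint law on a product space follows once each marginal is tight, since a product of compact sets is compact in the product topology. The plan is therefore to treat the six components of $\chi$ one at a time, using the uniform bounds \eqref{energyGal}--\eqref{GalerkinConvEta2Np} and Lemma \ref{lemma:fg'}. Concretely, for each component I will show that for every $\varepsilon>0$ there is a compact (or relatively compact, in the sub-Polish/quasi-Polish sense needed for Jakubowski's theorem) set $K_\varepsilon$ whose complement has probability at most $\varepsilon$, uniformly in $N$.

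\textbf{The Wiener component and the weak-type components.} For $\chi_W$, the law of $(W^k)_{k\in\mathbb Z}$ does not depend on $N$. A single Radon measure on a Polish space is tight, so this marginal is tight; note that only finitely many $W^k$ enter for each $N$, but the whole family lives on the same product space.

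For the components carrying weak or weak$^*$ topologies ($\chi_\bu$, $\chi_{\nabx\bu}$, and the weak$^*$ parts of $\chi_\eta$, $\chi_{\partial_t\eta}$), closed balls are compact and metrisable on bounded sets by Banach--Alaoglu, since the preduals are separable. The energy inequality \eqref{energyGal} gives a pathwise bound in terms of deterministic data only: the right-hand side involves $\bff$ on $\mathcal O_{\eta^N}\subset \mathcal O\cup S_\alpha$, which is bounded by $\|\bff\|_{L^2(I;L^2(\mathcal O\cup S_\alpha))}$. Hence these components lie almost surely in a fixed ball, and tightness is immediate; Chebyshev is not even needed.

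\textbf{The strong components.} For the $L^\infty(I;L^\infty(\Gamma))$ part of $\chi_\eta$ I will use Arzel\`a--Ascoli or Aubin--Lions: $\eta^N$ is bounded in $L^\infty(I;W^{2,2}(\Gamma))\cap W^{1,\infty}(I;L^2(\Gamma))$ pathwise, and this space embeds compactly into $C(\overline I\times\Gamma)$ because $W^{2,2}(\Gamma)\hookrightarrow\hookrightarrow C^{1}(\Gamma)$.

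For the strong $L^2(I;W^{1/2,2}(\Gamma))$ part of $\chi_{\partial_t\eta}$, the bound \eqref{GalerkinConvEta2Np} only gives boundedness, not compactness, in that same space. This is the step I expect to be delicate. I would first try to take the strong topology at a slightly lower index $W^{1/2-\epsilon,2}$. I would then obtain compactness from the Aubin--Lions argument, using time regularity of $\partial_t\eta^N$ in a negative space from the structure equation. The equation gives $\partial_t\eta^N$ as a drift in $W^{-2,2}$ plus a stochastic integral; the Burkholder--Davis--Gundy inequality and Kolmogorov's criterion give a fractional time derivative $W^{\beta,2}(I;W^{-2,2})$ bound in expectation.

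The coupling term $g_\eta$ is not separately controlled. This is why the paper handles it through the $\tau_\sharp$ pairing of Lemma \ref{lemma:fg'}, and I would interpolate with that Lemma if the direct route fails.

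\textbf{The $\tau_\sharp$ component and conclusion.} For $\chi_{f,g}$, tightness on $(L^2(I;X\times X'),\tau_\sharp)$ of both pairs is exactly Lemma \ref{lemma:fg'}. Combining the marginals, for given $\varepsilon$ I choose each $K^i_{\varepsilon/6}$ and set $K=\prod_i K^i_{\varepsilon/6}$. By the union bound, $\mathbb P(\text{components}\notin K)\le\varepsilon$ for all $N$, which proves the joint tightness on $\chi$.
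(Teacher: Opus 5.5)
Your treatment of most of the path space matches the paper, whose proof is only the remark that the lemma follows from the pathwise bounds \eqref{energyGal}--\eqref{GalerkinConvEta2Np} and Lemma \ref{lemma:fg'}. The matching parts are the fixed Wiener law, Banach--Alaoglu for the weak and weak$^*$ factors, Arzel\`a--Ascoli for the strong $L^\infty(I;L^\infty(\Gamma))$ part of $\chi_\eta$, and Lemma \ref{lemma:fg'} for $\chi_{f,g}$.

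\textbf{The gap: the $L^2(I;W^{1/2,2}(\Gamma))$ part of $\chi_{\partial_t\eta}$.} You give it the strong topology and rightly note that \eqref{GalerkinConvEta2Np} then gives no compactness. You then propose to lower the index and run Aubin--Lions, with time regularity of $\partial_t\eta^N$ in $W^{-2,2}(\Gamma)$ taken from ``the structure equation''. That step fails, because a weak martingale solution has no stand-alone structure equation. The boundary stress $g_\eta$ is not defined for weak solutions. Definition \ref{def:weakMartSolution}(e) only admits pairs $(\phi,\bfphi)$ with $\bfphi\circ\bfvarphi=\phi\bn$, and these are transported by $\iota_{\eta^N}$ and $\mathcal J_{\eta^N}$. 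Any $\phi\neq0$ therefore brings in the fluid terms. You control time increments only of the coupled quantity $\int_\Gamma\partial_t\eta^N\,\iota_{\eta^N}\phi\dy+\int_{\mathcal O_{\eta^N}}\bu^N\cdot\mathcal J_{\eta^N}\bfphi\dx$, never of $\partial_t\eta^N$ alone. That is exactly why compactness is phrased through the pairings of Lemma \ref{lemma:fg'}. Your fallback ``interpolate with that Lemma'' is not an argument, and lowering the index changes the path space anyway.

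\textbf{The fix.} This factor is meant with the weak topology: Proposition \ref{prop:skorokhod}(b) only asserts $\partial_t\overline\eta^N\rightharpoonup\partial_t\overline\eta$ in $L^2(I;W^{1/2,2}(\Gamma))$. The bound \eqref{GalerkinConvEta2Np} holds pathwise (trace theorem plus \eqref{energyGal}, with the Lipschitz character of $\partial\mathcal O_{\eta^N}$ controlled by the energy). So the laws live on a fixed ball of the separable Hilbert space $L^2(I;W^{1/2,2}(\Gamma))$, which is weakly compact and metrisable. This marginal is then tight exactly like your other weak factors.

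Strong compactness of $\partial_t\eta^N$ is not obtained at the level of tightness at all. After the Skorokhod representation, add the two $\tau_\sharp$-convergences of Proposition \ref{prop:skorokhod}(b), which correspond to the splitting \eqref{eq:convrhouN}. This yields convergence of $\int_I\int_{\mathcal O\cup S_\alpha}|\mathbb I_{\mathcal O_{\overline\eta^N}}\overline\bu^N|^2\dx\dt+\int_I\int_\Gamma|\partial_t\overline\eta^N|^2\dy\dt$. Together with weak convergence, this gives strong convergence in $L^2$. Interpolating with the $W^{1/2,2}$ bound then gives strong convergence in $L^2(I;W^{s,2}(\Gamma))$ for $s<1/2$.
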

Thus, by the Jakubowski-Skorokhod representation theorem \cite{jakubow}  (we refer to \cite[Theorem A.1]{on2} for a statement 
which combines Prokhorov's and Skorokhod's theorem for quasi-Polish spaces), we obtain the following.
\begin{proposition}\label{prop:skorokhod}
There exist a complete probability space $(\overline{\Omega},\overline{\mathfrak{F}}, \overline{\mathbb{P}})$ with $\chi$-valued random variables
\begin{align*}
\overline{\bm{\Theta}}^N
:=
\bigg[(\overline{W}_t^{k,N})_{k\in\mathbb Z}, \overline{\eta}^N, \partial_t\overline{\eta}^N, \mathbb{I}_{\mathcal{O}_{\overline{\eta}^N}}\overline{\bu}^N, \mathbb{I}_{\mathcal{O}_{\overline{\eta}^N}}\nabx\overline{\bu}^N,
\begin{pmatrix}
((\partial_t\overline{\eta}^N, \mathscr{F}^{\overline{\eta}^N}(\partial_t\overline{\eta}^N)), (\partial_t\overline{\eta}^N, \mathbb{I}_{\mathcal{O}_{\overline{\eta}^N}}\overline{\bu}^N))
\\
((0,\overline{\bu}^N- \mathscr{F}^{\overline{\eta}^N}(\partial_t\overline{\eta}^N)), (\partial_t\overline{\eta}^N, \mathbb{I}_{\mathcal{O}_{\overline{\eta}^N}}\overline{\bu}^N))
\end{pmatrix}
\bigg] 
\end{align*}
for $N\in\mathbb{N}$ and 
\begin{align*}
\overline{\bm{\Theta}}
:=
\bigg[(\overline{W}_t^k)_{k\in\mathbb Z}, \overline{\eta}, \partial_t\overline{\eta}, \mathbb{I}_{\mathcal{O}_{\overline{\eta}}}\overline{\bu}, \mathbb{I}_{\mathcal{O}_{\overline{\eta}}}\nabx\overline{\bu},
\begin{pmatrix}
((\partial_t\overline{\eta}, \mathscr{F}^{\overline{\eta}}(\partial_t\overline{\eta})), (\partial_t\overline{\eta}, \mathbb{I}_{\mathcal{O}_{\overline{\eta}}}\overline{\bu}))
\\
((0,\overline{\bu}- \mathscr{F}^{\overline{\eta}}(\partial_t\overline{\eta})), (\partial_t\overline{\eta}, \mathbb{I}_{\mathcal{O}_{\overline{\eta}}}\overline{\bu}))
\end{pmatrix}
\bigg]  
\end{align*}
such that
\begin{enumerate}
\item[(a)] For all $N\in\mathbb{N}$, the law of $\overline{\bm{\Theta}}^N$ on $\chi$ is given by
\begin{align*}
\mathcal{L}\bigg[(W_t^k)_{k\in\mathbb Z}, \eta^N, \partial_t\eta^N, \mathbb{I}_{\mathcal{O}_{\eta^N}}\bu^N, \mathbb{I}_{\mathcal{O}_{\eta^N}}\nabx\bu^N,
\begin{pmatrix}
((\partial_t \eta^N, \mathscr{F}^{\eta^N}(\partial_t \eta^N)), (\partial_t\eta^N, \mathbb{I}_{\mathcal{O}_{\eta}} \bu^N))
\\
((0,\bu^N- \mathscr{F}^{\eta^N}(\partial_t\eta^N)), (\partial_t \eta^N, \mathbb{I}_{\mathcal{O}_{\eta}} \bu^N)^{•})
\end{pmatrix}
\bigg] ;
\end{align*}
\item[(b)] $\overline{\bm{\Theta}}^N$ converges $\overline{\mathbb{P}}$-almost surely to $\overline{\bm{\Theta}}$ in the topology of $\chi$, i.e.
\begin{align*}
\overline{W}^{k,N}_t\rightarrow \overline{W}^k_t  \quad\text{ in } \quad C(\overline{I})   \quad\overline{\mathbb{P}}\text{-a.s.},
\\
\overline{\eta}^N
\overset{*}{\rightharpoonup}
\overline{\eta} \quad
\text{ in } \quad L^\infty(I, W^{2,2}(\Gamma)) \quad  \overline{\mathbb{P}}\text{-a.s.},
\\
\partial_t\overline{\eta}^N
\overset{*}{\rightharpoonup}
\partial_t\overline{\eta} \quad
\text{ in } \quad L^\infty(I, L^{2}(\Gamma))   \quad\overline{\mathbb{P}}\text{-a.s.},
\\
\partial_t\overline{\eta}^N
{\rightharpoonup}
\partial_t\overline{\eta} \quad
\text{ in } \quad L^2(I, W^{1/2,2}(\Gamma))   \quad\overline{\mathbb{P}}\text{-a.s.},
\\
\overline{\eta}^N
\rightarrow
\overline{\eta} \quad
\text{ in } \quad L^\infty(I, L^\infty(\Gamma)) \quad  \overline{\mathbb{P}}\text{-a.s.},
\\
\mathbb{I}_{\mathcal{O}_{\overline{\eta}^N}}\overline{\bu}^N
\overset{*}{\rightharpoonup}
\mathbb{I}_{\mathcal{O}_{\overline{\eta}}} \overline{\bu} \quad
\text{ in } \quad L^\infty(I, L^{2}(\mathcal{O}\cup S_\alpha))   \quad\overline{\mathbb{P}}\text{-a.s.},
\\
\mathbb{I}_{\mathcal{O}_{\overline{\eta}^N}}\nabx\overline{\bu}^N
\rightharpoonup
\mathbb{I}_{\mathcal{O}_{\overline{\eta}}}\nabx\overline{\bu}
 \quad\text{ in } \quad L^2(I, L^2(\mathcal{O}\cup S_\alpha))   \quad\overline{\mathbb{P}}\text{-a.s.},
\end{align*}
as well as
\begin{align*}
\int_I\int_{\mathcal{O}\cup S_\alpha}
\mathbb{I}_{\mathcal{O}_{\overline{\eta}^N}}\overline{\bu}^N
\cdot
 \mathscr{F}^{\overline{\eta}^N}\partial_t\overline{\eta}^N\dx\dt
 +
 \int_I\int_\Gamma\vert\partial_t\overline{\eta}^N\vert^2\dy\dt
\\
\rightarrow
\int_I\int_{\mathcal{O}\cup S_\alpha}
\mathbb{I}_{\mathcal{O}_{\overline{\eta}}} \overline{\bu}
\cdot
 \mathscr{F}^{\overline{\eta}}\partial_t\overline{\eta}\dx\dt
 +
 \int_I\int_\Gamma\vert\partial_t\overline{\eta}\vert^2\dy\dt
\end{align*}
and
\begin{align*}
\int_I\int_{\mathcal{O}\cup S_\alpha}
\mathbb{I}_{\mathcal{O}_{\overline{\eta}^N}}\overline{\bu}^N
\cdot(\overline{\bu}^N-
 \mathscr{F}^{\overline{\eta}^N}\partial_t\overline{\eta}^N)\dx\dt
\\
\rightarrow
\int_I\int_{\mathcal{O}\cup S_\alpha}
\mathbb{I}_{\mathcal{O}_{\overline{\eta}}} \overline{\bu}
\cdot(\overline{\bu}-
 \mathscr{F}^{\overline{\eta}}\partial_t\overline{\eta})\dx\dt
\end{align*}
$\overline{\mathbb{P}}$-a.s.
\end{enumerate}
\end{proposition}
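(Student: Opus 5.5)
The plan is to derive Proposition \ref{prop:skorokhod} directly from the tightness statement in the preceding lemma together with the Jakubowski--Skorokhod representation theorem for quasi-Polish spaces, in the form of \cite[Theorem A.1]{on2}.

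First I check that each factor of $\chi$ is quasi-Polish, that is, that it admits a countable family of continuous real functions separating points.
\begin{itemize}
\item $\chi_W$ with the product topology is Polish.
\item The weak-$*$ topologies on $L^\infty(I;W^{2,2}(\Gamma))$, $L^\infty(I;L^2)$ and $L^\infty(I;L^2(\mathcal O\cup S_\alpha))$ are dual topologies of separable spaces. Pairings with a countable dense set of $L^1(I;\cdot)$ functions separate points.
\item The same holds for the weak topology on $L^2(I;L^2)$.
\item The intersections with the strong spaces $L^\infty(I;L^\infty(\Gamma))$ and $L^2(I;W^{1/2,2}(\Gamma))$ stay quasi-Polish. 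The norm topology is finer, so the separating family of the weak part still works.
\item $(L^2(I;X\times X'),\tau_\sharp)$ was already noted to be quasi-Polish, because $\tau_\sharp$ refines the weak topology.
\end{itemize}
Finite products of quasi-Polish spaces are quasi-Polish. Tightness of the joint laws follows from the preceding lemma: tightness of each marginal gives tightness of the product. The marginals on the weak-$*$ factors are tight by Banach--Alaoglu applied to \eqref{energyGal}--\eqref{GalerkinConvEta2Np}.

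The one exception is $\chi_\eta$ with the strong $L^\infty(I;L^\infty(\Gamma))$ part. For this factor I use the Arzel\`a--Ascoli/Aubin--Lions compact embedding of bounded sets of $L^\infty(I;W^{2,2})\cap W^{1,\infty}(I;L^2)$ into $C(\overline I\times\Gamma)$. This embedding holds since $W^{2,2}(\Gamma)\hookrightarrow\hookrightarrow C^1(\Gamma)$, and interpolation with the Lipschitz-in-time $L^2$ bound gives equicontinuity.

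The representation theorem then provides $(\overline\Omega,\overline{\mathfrak F},\overline{\mathbb P})$, random variables $\overline{\bm\Theta}^N$ with the same laws as the original tuples, and a limit $\overline{\bm\Theta}$, such that the convergence in $\chi$ holds almost surely. This is (a) and the list of convergences in (b).

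It remains to identify the components of the limit and of the new variables, and I expect this to be the main obstacle. One has to know that the new variables keep the structural relations of the old ones: the $\partial_t\overline\eta^N$ component is the time derivative of $\overline\eta^N$, and the velocity component has the form $\mathbb I_{\mathcal O_{\overline\eta^N}}\overline\bu^N$ with gradient $\mathbb I_{\mathcal O_{\overline\eta^N}}\nabx\overline\bu^N$. One also has to know that the pair in $\chi_{f,g}$ equals the corresponding expressions built from $\overline\eta^N,\overline\bu^N$. I would get this from equality of laws: each relation is a Borel-measurable condition on $\chi$ (for example $\int\eta\partial_t\phi=-\int\partial_t\eta\,\phi$ for a countable dense set of $\phi$, or $\mathscr F^{\eta}(\partial_t\eta)$ as a measurable function of $\eta$). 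Such a condition holds almost surely for the original variables, so it holds almost surely for the new ones.

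The identification carries over to the limit as follows.
\begin{itemize}
\item Linear relations pass to the limit by weak convergence.
\item For the indicator functions, the uniform convergence $\overline\eta^N\to\overline\eta$ gives $\mathbb I_{\mathcal O_{\overline\eta^N}}\to\mathbb I_{\mathcal O_{\overline\eta}}$ a.e.
\item For the extension, Proposition \ref{prop:musc} and linearity give $\mathscr F^{\overline\eta^N}(\partial_t\overline\eta^N)\rightharpoonup\mathscr F^{\overline\eta}(\partial_t\overline\eta)$.
\end{itemize}
Finally, the two displayed convergences of integrals are exactly the defining property \eqref{eq:tau} of $\tau_\sharp$, applied to the two $\chi_{f,g}$ components once they have been identified.
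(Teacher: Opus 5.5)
Your proposal follows the paper's route. The paper obtains this proposition in one line, from tightness of the joint laws on $\chi$ (built from \eqref{energyGal}--\eqref{GalerkinConvEta2Np} and Lemma \ref{lemma:fg'}) together with the Jakubowski--Skorokhod theorem as in \cite[Theorem A.1]{on2}. Three of your steps are details the paper leaves implicit, and they are the right ones: the check of the quasi-Polish property, the Aubin--Lions argument for the strong $L^\infty(I;L^\infty(\Gamma))$ component of $\chi_\eta$, and the transfer of the structural relations to the new variables through equality of laws.

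One step fails as written. You give the factor $L^2(I;W^{1/2,2}(\Gamma))$ of $\chi_{\partial_t\eta}$ its norm topology, and then count this marginal among those made tight by Banach--Alaoglu. But \eqref{GalerkinConvEta2Np} only bounds $\partial_t\eta^N$ in $L^2(I;W^{1/2,2}(\Gamma))$, which gives relative compactness only in the weak topology. Nothing uniform in $N$ yields strong compactness there: the transport noise is energy-conservative and adds no dissipation estimate to the approximate problems. This missing strong compactness of $(\partial_t\eta^N,\bu^N)$ is exactly why $\tau_\sharp$ is introduced. The definition of $\chi_{\partial_t\eta}$ does not specify the topology, but (b) only asserts weak convergence in $L^2(I;W^{1/2,2}(\Gamma))$. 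So that factor must carry the weak topology, and with this change your argument goes through.

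A smaller point concerns identifying the limit of the extension term. Linearity and the bounds of Proposition \ref{prop:musc} only control $\mathscr F^{\overline\eta}(\partial_t\overline\eta^N-\partial_t\overline\eta)$. To identify the weak limit of $\mathscr F^{\overline\eta^N}(\partial_t\overline\eta^N)$ with $\mathscr F^{\overline\eta}(\partial_t\overline\eta)$, you also need $(\mathscr F^{\overline\eta^N}-\mathscr F^{\overline\eta})(\partial_t\overline\eta^N)$ to converge weakly to zero. That requires a stability property of the extension operator with respect to $\eta$ under $\overline\eta^N\to\overline\eta$. This property comes from the construction in \cite{MuSc}, not from the listed conclusions of Proposition \ref{prop:musc}; the paper uses it tacitly as well.
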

Now we introduce the filtration on the new probability space, 
which ensures the correct measurabilities of the new random variables.
For this, we let $(\overline{\mathfrak F}_t)_{t\geq0}$ and $(\overline{\mathfrak F}_t^{N})_{t\geq0}$ be the $\overline{\mathbb P}$-augmented
canonical filtration on the variables $\overline{\boldsymbol{\Theta}}$ and
$\overline{\boldsymbol{\Theta}}^N $, respectively, that is\footnote{Some of the variables are not continuous in time, for those one can define $\Sigma_t$ as the history of a random distribution, cf. \cite[Chapter 2.8]{BFHbook}.}
\begin{align*}
\overline{\mathfrak F}_t=\Sigma\big[\Sigma_t(\overline{W}^k_t)\cup \Sigma_t(\mathbb I_{\mathcal O_{\overline{\eta}}}\overline{\mathbf u}) \cup \Sigma_t(
\mathbb I_{\mathcal O_{\overline{\eta}}}\nabx\overline{\mathbf u}) \cup \Sigma_t( \overline\eta) \cup \Sigma_t(\partial_t\overline\eta ) 
\big]
\end{align*}
for $t\in I$ and similarly for $\overline{\mathfrak F}_t^N$.
By \cite[Theorem 2.9.1]{BFHbook}, the weak equation continuous to hold on the new probability space.

\section{Convergence in probability}
\label{sec:pcon}

\subsection{Identification of limit}
As in \cite[Section 3.3]{BrMeMo}, it  is straightforward to pass to the limit in all the drift  terms   
\begin{align*} 
\int_0^t
&\dd\bigg(\int_{\Gamma}\partial_t\overline{\eta}^N \,\iota_\eta\phi\dy
+
\int_{\mathcal{O}_{\overline{\eta}^N}}\overline{\bu}^N\cdot\mathcal{J}_\eta\bm{\phi}\dx
\bigg)
\\&-
\int_0^t\int_{\Gamma}(\partial_t\overline{\eta}^N\,\iota_\eta\partial_t\phi
- \naby^2 \overline{\eta}^N\naby^2\iota_\eta \phi  +  \gamma\,  \partial_t\overline{\eta}^N\naby^2\iota_\eta\phi
 +g\,\iota_\eta\phi
)\dy\ds
\\
&-\int_0^t\int_{\mathcal{O}_{\overline{\eta}^N}}\big(\overline{\bu}^N\cdot\partial_t(\mathcal{J}_\eta\bm{\phi}
)+
\overline{\bu}^N\otimes \overline{\bu}^N:\nabx \mathcal{J}_\eta\bm{\phi}
-
\nabx\overline{\bu}^N:\nabx\mathcal{J}_\eta\bm{\phi}
+\bff\cdot\mathcal{J}_\eta\bm{\phi}
\big)\dx\ds
\end{align*} 
to obtain the corresponding distributional solution of the limit system  $\overline{\mathbb{P}}$-a.s.   Thus, it remains to pass to the limit in the stochastic integral.  
By the Burkholder--Davis--Gundy inequality and the definition of $\sigma_k$ in \eqref{ansatz},  we deduce that   
\begin{align*}
\overline{\mathbb{E}}\bigg[ \sup_{t\in I}&\bigg\vert\int_0^t\sum_{k\in\mathbb{Z}} 
\int_{\Gamma} 
\sqrt{\gamma}
 \partial_t\overline{\eta}^N \sigma_k\,\naby \iota_\eta\phi
 \dy\dd W_s^k\bigg\vert \bigg]
 \lesssim 
\overline{\mathbb{E}}\bigg(\int_{I}  \sum_{k\in\mathbb{Z}}
\Big(\int_{\Gamma} \sqrt{\gamma}\partial_t\overline{\eta}^N \sigma_k\,\naby \iota_\eta\phi
 \dy\Big)^2\dt\bigg)^{1/2} 
  \\
 &\lesssim 
 \sqrt{\gamma} \Big(\sum_{N\leq|k|\leq 2N}\frac{1}{|k|^2}\Big)^{-1/2}
\overline{\mathbb{E}}\bigg(\int_{I}  \sum_{k\in\mathbb{Z}}
\Big(\int_{\Gamma} \partial_t\overline{\eta}^N \frac{\mathrm{i}\,e^{\mathrm{i}k }}{|k|}\bm{1}_{\{N\leq|k|\leq 2N\}}
 \naby \iota_\eta\phi
 \dy\Big)^2\dt\bigg)^{1/2}  
  \\
 &\lesssim 
\Big(\sum_{N\leq|k|\leq 2N}\frac{1}{|k|^2}\Big)^{-1/2}
\bigg( \sup_{N\leq|k|\leq 2N}\frac{1}{|k|^2}\bigg)^{1/2}  
 \overline{\mathbb{E}} \Big(\int_{I}  \Vert \partial_t\overline{\eta}^N  
  \naby \iota_\eta\phi
 \Vert_{L^2(\mathbb{T}^2)}^2\dt\Big)^{1/2}  
 \\
 &\lesssim  
\frac{1}{ \sqrt{N}  }
\Vert \naby \iota_\eta\phi \Vert_{L^\infty(I\times \Gamma)} 
 \overline{\mathbb{E}}\Big( \int_I  \Vert \partial_t\overline{\eta}^N \Vert_{L^2(\Gamma)}^2 \dt  \Big)^{1/2} 
\lesssim\frac{1}{ \sqrt{N}  }
\end{align*} 
with a hidden constant depending only on $\gamma, T $ and $\Vert \naby \iota_\eta\phi \Vert_{L^\infty(I\times\Gamma)}$. In the last step, we have used H\"older inequality in time and the energy estimate \eqref{energyGal} whose right-hand side is uniform in $N$. The last estimate above, therefore, converges to zero as $N\rightarrow\infty$. Thus, we can conclude that the stochastic integral converges to zero when tested against a smooth functions. 
Hence we obtain
\begin{align*} 
\int_0^t
&\dd\bigg(\int_{\Gamma}\partial_t\overline{\eta} \,\iota_\eta\phi\dy
+
\int_{\mathcal{O}_{\overline{\eta}}}\overline{\bu}\cdot\mathcal{J}_\eta\bm{\phi}\dx
\bigg)
\\=&
\int_0^t\int_{\Gamma}(\partial_t\overline{\eta}^N\,\iota_\eta\partial_t\phi
- \naby^2 \overline{\eta}\naby^2\iota_\eta \phi  +  \gamma\,  \partial_t\overline{\eta}\naby^2\iota_\eta\phi
 +g\,\iota_\eta\phi)\dy\ds
\\
&+\int_0^t\int_{\mathcal{O}_{\overline{\eta}}}\big(\overline{\bu}\cdot\partial_t(\mathcal{J}_\eta\bm{\phi}
)+
\overline{\bu}\otimes \overline{\bu}:\nabx\mathcal{J}_\eta\bm{\phi}  
-
\nabx\overline{\bu}:\nabx\mathcal{J}_\eta\bm{\phi}+\bff\cdot\mathcal{J}_\eta\bm{\phi}\big)\dx\ds
\end{align*} 
for all  $(\phi, {\bfphi}) \in C^\infty(\overline{I}\times\Gamma) \times C^\infty_{\Div}(\overline{I}\times \R^2; \R^2)$ with $\bfphi\circ\bfvarphi =\phi {\bfn}$ on $I\times\Gamma$.
This yields
for all $\sigma\in I$
\begin{align}\label{eq.28.09B}
\begin{aligned}
\int_0^\sigma  \frac{\mathrm{d}}{\dt}\bigg(\int_\Gamma \partial_t \overline\eta \, \phi \dy
+
\int_{\mathcal{O}_{\overline{\eta}}}\overline{\bu} \cdot {\bfphi}\dx
\bigg)\dt 
&=
\int_0^\sigma \int_\Gamma \big(\partial_t \overline\eta\, \partial_t\phi
-
\partial_y^2\overline\eta\,\partial_y^2 \phi
+\gamma\partial_t\overline\eta\,\partial^2_y \phi+
 g\, \phi \big)\dy\dt 
 \\&+
\int_0^\sigma  \int_{\mathcal{O}_{\overline{\eta}}}\big(  \overline{\bu}\cdot \partial_t  {\bfphi} + \overline{\bu} \otimes \overline{\bu}: \nabx {\bfphi} 
  \big) \dx\dt
\\&
-\int_0^\sigma  \int_{\mathcal{O}_{\overline{\eta}}}\big(   
\nabx \overline{\bu}:\nabx {\bfphi} -\bff\cdot{\bfphi} \big) \dx\dt
\end{aligned}
\end{align} 
for all  $\phi:I\times\Gamma\rightarrow\R$, $\bfphi:I\times\mathcal O_\eta\rightarrow\R^2$ sufficiently smooth\footnote{This will be made precise below.}
 with $\bfphi\circ\bfvarphi_\eta =\phi {\bfn}$ on $I\times\Gamma$.
We would now like to conclude that we have a weak solution to the deterministic problem in the sense of Definition \ref{def:weakSolution}. However, note carefully that we currently do not know that
$\partial_t\overline\eta$ is differentiable in space
and hence the integral $\gamma\int_\Gamma\partial_t\overline\eta\,\partial^2_y \phi\dy$ above is written in a very weak form (different to Definition \ref{def:weakSolution}).
Also, we must prove that the energy inequality is satisfied.  In a first step we aim to clarify why the class of test-functions in \eqref{eq.28.09B} is admissible.
By construction, the class of admissible test-functions includes
\begin{align*}
\mathrm{Span}\big(\{(\psi \iota_\eta w_i,\psi\mathcal J_{\eta}\bfw_i):\,i\in\mathbb N,\,\psi \in C_0^1([0,T))\}\big),
\end{align*}
where $(w_i)$ is a basis of $W^{2,2}(\Gamma)$ and $(\bfw_i)$ of $W^{1,2}_{\Div}(\mathcal{O})$ in such a way that $\bfw_i\circ\bfvarphi_\eta=w_i\bfn$ on $I \times\Gamma$. In \cite{BrMeMo}, the existence proof relies on an approximation, where the geometry $\mathcal{O}_\eta$ is replaced by a regularised geometry $\mathcal{O}_{\mathfrak r^\varepsilon\eta}$, where $\mathfrak r^\varepsilon$ is the standard mollification. By the properties of the Piola transform defined in \eqref{piolaTransform}, the set
\begin{align*}
\mathrm{Span}\big(\{(\psi \iota_{\mathfrak r^\varepsilon\eta} w_i,\psi\mathcal J_{\mathfrak r^\varepsilon\eta}\bfw_i):\,i\in\mathbb N,\,\psi \in C_0^1([0,T))\}\big)
\end{align*}
is dense in the set of couples
\begin{align}\label{eq:18.09C}
(\phi, {\bfphi}) \in W^{1,2}(I;L^2(\Gamma))\cap L^{2}(I;W^{2,2}(\Gamma)) \times L^2(I;W^{1,2}_{\Div}(\mathcal{O}_{\mathfrak r^\varepsilon\eta}))\cap W^{1,2}(I;L^2(\mathcal{O}_{\mathfrak r^\varepsilon\eta}))
\end{align}
which is further equivalent to the set
\begin{align*}
\mathrm{Span}\big(\{(\psi w_i,\psi\bfw_i):\,i\in\mathbb N,\,\psi \in C_0^1([0,T))\}\big)
\end{align*}
being dense in the set of couples
$$(\phi, {\bfphi}) \in W^{1,2}(I;L^2(\Gamma))\cap L^{2}(I;W^{2,2}(\Gamma)) \times L^2(I;W^{1,2}_{\Div}(\mathcal{O}))\cap W^{1,2}(I;L^2(\mathcal{O} ))$$ with $\phi(T,\cdot)=0$, ${\bfphi}(T,\cdot)=0$ and  $\bfphi\circ\bfvarphi=\phi {\bfn}$ on $I\times\Gamma$. The latter is well-known.
%

Consider $\phi\in W^{1,2}(I;L^2(\Gamma))\cap L^2(I;W^{2,2}(\Gamma))$ and $\varepsilon>0$.
We can insert $(\phi,\mathscr F_{\mathfrak r^\varepsilon \eta}\phi)$ in the class \eqref{eq:18.09C} using the estimates from Proposition \ref{prop:musc}. The latter also imply that
\begin{align*}
\mathscr F^{\mathfrak r^\varepsilon\eta }\phi\rightharpoonup \mathscr F^{\eta}\phi \quad\text{in}\quad W^{1,2}(I;L^2(\mathcal{O}\cup S_\alpha))\cap L^{2}(I;W^{1,2}_{\Div}(\mathcal{O}\cup S_\alpha)).
\end{align*}
Similarly, we have
\begin{align*}
\mathscr F^{\eta^N }\phi\rightharpoonup \mathscr F^{\eta}\phi \quad\text{in}\quad W^{1,2}(I;L^2(\mathcal{O}\cup S_\alpha))\cap L^{2}(I;W^{1,2}_{\Div}(\mathcal{O}\cup S_\alpha))
\end{align*}
using Proposition \ref{prop:skorokhod}. Thus the class of admissible test-functions includes tuple of the form
\begin{align*}
(\phi,\mathscr F^{\eta}(\phi)),\quad\phi\in W^{1,2}(I;L^2(\Gamma))\cap L^2(I;W^{2,2}(\Gamma)).
\end{align*}
Let us finally suppose that
\begin{align}\label{eq:19.08D}
\begin{aligned}
(\phi,\bfphi)\in W^{1,2}(I;L^2(\Gamma))&\cap L^{2}(I;W^{2,2}(\Gamma)) \\&\times W^{1,2}(I;L^2(\mathcal{O}_\eta))\cap L^{2}(I;W^{1,2}_{\Div}(\mathcal{O}_\eta))
\end{aligned}
\end{align}
as well as $\bfphi\circ\bfvarphi_\eta=\phi\bfn$ on $I\times\Gamma$.
First of all, we write
\begin{align*}
(\phi,\bfphi)=(\phi,\mathscr F_\eta\phi)+(0,\bfphi-\mathscr F_\eta\phi).
\end{align*}
We have already shown that $(\phi,\mathscr F_\eta\phi)$ is admissible. Note carefully that $\bfpsi:=(\bfphi-\mathscr F_\eta\phi)(t)\in W_{0,\Div}^{1,2}(\mathcal{O}_\eta)$ for a.a. $t\in I$.
Since the boundary of $\mathcal{O}_{\eta}$ is Lipschitz continuous by \eqref{energyGal},
we can approximate $(\bfpsi)(t)$ by $(\bfpsi(t))_\kappa$, where $ (\bfpsi(t))_\kappa \in C^\infty_{c,\Div}((\mathcal{O}_{\eta}))$, see \cite[Theorem III.4.1]{MR2808162}. The mapping $\bfpsi\mapsto (\bfpsi)_\kappa$ is linear and continuous in all customary spaces. Hence the differentiability in time is preserved. Finally, we denote by $\bfpsi_\kappa$ a temporal regularisation of $(\bfpsi)_\kappa$
which leads to a function which is also smooth in time. 
Note that $\bfpsi_\kappa$ is solenoidal and has compact support in space. Thus it is an admissible test-function. By standard properties of the regularisation we employed we can pass to the limit $\kappa\rightarrow0$ obtaining admissibility of $(0,\bfphi-\mathscr F_\eta\phi)$.

In conclusion, the class of admissible text-functions is given by \eqref{eq:19.08D}.

Now, to remedy the anomaly concerning the very weak form
$\gamma\int_\Gamma\partial_t\overline\eta\,\partial^2_y \phi\dy$ mentioned above, let us consider the Steklov-average $(f)_h(t):=h^{-1}\int_t^{t+h}f(s)\ds$ for $h>0$ which can be written as a rescaled convolution with the indicator function $\mathbb I_{(0,1)}$.  We also consider the dual operator $(f)^\star_h(t):=h^{-1}\int_{(t-h)\vee 0}^{t\wedge\tau}f(s)\ds$ which satisfies
\begin{align*}
\int_0^\sigma (f)_h\,g\ds=\int_0^{\sigma+h} f\,(g)^\star_{h}\ds\quad\forall f,g\in L^2((0,\sigma+h)),
\end{align*} 
where $\sigma\in(h,T-h)$ is fixed.
For a pair of admissible test-functions $(\phi, {\bfphi})$, the pair $((\phi)_h^\star, ({\bfphi})_h^\star)$ is also admissible. We obtain
for a.a. $\sigma>h$
\begin{align*}
\int_0^{\sigma+h}  \frac{\mathrm{d}}{\dt}\bigg(\int_\Gamma \partial_t \overline\eta \, (\phi)^\star_h \dy
&+
\int_{\mathcal{O}_{\overline{\eta}}}\overline{\bu} \cdot ({\bfphi})^\star_h\dx
\bigg)\dt 
\\=&
\int_0^{\sigma} \int_\Gamma \big(\partial_t (\overline\eta)_h\, \partial_t\phi
-
\partial_y^2(\overline\eta)_h\,\partial_y^2 \phi
-
\gamma\partial_t\partial_y(\overline\eta)_h\,\partial_y \phi
+
 (g)_h\, \phi \big)\dy\dt
\\&+
\int_0^{\sigma}  \int_{\mathcal{O}_{\overline{\eta}}}\big(  (\overline{\bu})_h\cdot \partial_t  {\bfphi} + (\overline{\bu} \otimes \overline{\bu})_h: \nabx {\bfphi} 
  \big) \dx\dt
\\&
-\int_0^\sigma  \int_{\mathcal{O}_{\overline{\eta}}}\big(   
\nabx (\overline{\bu})_h:\nabx {\bfphi} -(\bff)_h\cdot{\bfphi} \big) \dx\dt
 .
\end{align*}
Now we use the pair of test functions
$((\partial_t\eta)_h,(\overline{\bu})_h)$
obtaining
\begin{align}
\int_\Gamma \partial_t\overline\eta\,((\partial_t \overline\eta)_h)_h^\star\dy\bigg|_0^{\sigma+h}&-\tfrac{1}{2}\int_\Gamma |(\partial_t\overline\eta)_h(\sigma)|^2\dy
+\tfrac{1}{2}\int_\Gamma |\partial_y^2 (\overline\eta)_h(\sigma)|^2\dy
+
\gamma\int_0^\sigma\int_\Gamma|\partial_y\partial_t(\overline\eta)_h|^2\dy\dt\nonumber
\\
+\int_{\mathcal O_{\overline\eta}} \overline\bfu\cdot((\overline\bfu)_h)_h^\star\dx\bigg|_0^{\sigma+h}&
-\tfrac{1}{2}\int_{\mathcal{O}_{\overline{\eta}}}|(\overline\bfu)_h(\sigma)|^2\dx+\int_0^t\int_{\mathcal{O}_{\overline{\eta}}}|\nabx(\overline\bfu)_h|^2\dxt\nonumber
\\
&=\tfrac{1}{2}\int_\Gamma |\partial_t (\overline\eta)_h(0)|^2\dy+\tfrac{1}{2}\int_\Gamma |\partial_y^2 (\overline\eta)_h(0)|^2\dy+\tfrac{1}{2}\int_{\mathcal{O}_{\overline{\eta}}}|(\overline\bfu)_h(0)|^2\dx\label{eq:05.09}
\\
&+\int_0^\sigma\int_\Gamma (g)_h\,(\partial_t\overline\eta)_h\dy\dt+\int_0^\sigma  \int_{\mathcal{O}_{\overline{\eta}}}(\bff)_h\cdot(\overline{\bfu})_h\dx\dt\nonumber
\\
&+\int_0^\sigma  \int_{\mathcal{O}_{\overline{\eta}}} (\overline{\bu} \otimes \overline{\bu})_h: \nabx (\overline{\bu})_h 
 \dx\dt-\tfrac{1}{2}\int_0^\sigma\int_{\partial\mathcal{O}_{\overline{\eta}}}(\partial_t\overline\eta\bn\cdot\bn_{\overline{\eta}})\circ\bm{\varphi}_{\overline{\eta}}^{-1}
 |(\overline\bfu)_h(t)|^2\,\dd\mathcal H^1\dt.\nonumber
\end{align}
The first two lines on the right-hand side as well as the non-dissipative terms on the left-hand side are uniformly bounded in $h$ as a consequence of \eqref{energyGal} and Proposition \ref{prop:skorokhod}. As $h\rightarrow0$ the last line converges to
\begin{align*}
\int_0^\sigma  \int_{\mathcal{O}_{\overline{\eta}}} \overline{\bu} \otimes \overline{\bu}: \nabx {\bfu}
 \dx\dt-\tfrac{1}{2}\int_0^\sigma\int_{\partial\mathcal{O}_{\overline{\eta}}}(\partial_t\overline\eta\bn\cdot\bn_{\overline{\eta}})\circ\bm{\varphi}_{\overline{\eta}}^{-1}|\overline\bfu|^2\,\dd\mathcal H^1\dt=0
\end{align*} 
using the kinematic boundary condition $\bu\circ\bm{\varphi}_\eta=(\partial_t\eta)\bn$ on $\Gamma$.
Thus we obtain
\begin{align}\label{eq:higher}
\overline\eta\in W^{1,2}(I;W^{1,2}(\Gamma)).
\end{align}
As a next step we aim to prove the energy balance.
As a first step towards this aim we improve the regularity of $\partial_t\overline{\eta}$.

\subsection{Higher regularity}
\label{sec:highreg}
We aim at improving the spatial regularity of $\overline{\eta}$ to
\begin{align}\label{eq:13.05B}
\overline{\eta}\in L^2(I;W^{5/2,2}(\Gamma)).
\end{align}
For this, we consider the operator $(-\partial_y^2)^{1/4}$ defined via Fourier modes
and the projection $\mathscr P_{\mathscr N}$ onto the first $\mathscr N$ Fourier modes.
Now, we consider
\begin{align*}
D^{\mathscr{K}}_{\mathscr N}f:=(-\partial_y^2)^{1/2}\mathscr P_{\mathscr N} f
-\mathscr{K}_{\overline\eta}((-\partial_y^2)^{1/2}\mathscr P_{\mathscr N}f),
\end{align*}
where $\mathscr K_{\overline\eta}$ is given in \eqref{eq:corr} and $(-\partial_y^2)^{l}f(x):=\sum_{k\in\mathbb Z}c_k(f)|k|^{2l}\exp(\mathfrak i kx)$
with the Fourier coefficients $(c_k(f))_{k\in\mathbb Z}$.
Finally, we choose
\begin{align*}
(\bm{\phi},\phi)=\big(\mathscr{F}^{\overline\eta}(D^{\mathscr{K}}_{\mathscr N}\overline{\eta})\,,D^{\mathscr{K}}_{\mathscr N}\overline{\eta}
\big)
\end{align*}
as test-function in the weak formulation \eqref{eq.28.09B} of the fluid-structure system. By making the fourth order term the subject, we obtain
\begin{align}
\int_0^t
\int_{\Gamma}&\nonumber
\partial_y^2 \mathscr{K}_{\overline\eta}((-\partial_y^2)^{1/2}\mathscr P_{\mathscr N}\overline{\eta}) \partial_y^2\overline{\eta} \dy\ds
-
\int_0^t
\int_{\Gamma}
\partial_y^2 ((-\partial_y^2)^{1/2}\mathscr P_{\mathscr N}\overline{\eta}) \partial_y^2\overline{\eta} \dy\ds
\\&\nonumber-\gamma
\int_0^t
\int_{\Gamma}
\partial_y^2 \mathscr{K}_{\overline\eta}((-\partial_y^2)^{1/2}\mathscr P_{\mathscr N}\overline{\eta}) \partial_t\overline{\eta} \dy\ds
+
\gamma 
\int_0^t\int_{\Gamma}\partial_t\overline{\eta} \,
\partial_y^2((-\partial_y^2)^{1/2}\mathscr P_{\mathscr N}\overline{\eta})\dy\ds
\\
\nonumber=&
-
\bigg(
\int_{\mathcal{O}_{\overline{\eta}}}\overline{\bu}_0 \cdot\mathscr{F}^{\overline\eta}(D^{\mathscr{K}}_{\mathscr N}\overline{\eta}_0)\dx
+
\int_\Gamma   \overline\eta_\star \, (-\partial_y^2)^{1/2}\mathscr P_{\mathscr N}\overline{\eta}_0 \dy
\bigg) 
\\&\nonumber+
\int_{\mathcal{O}_{\overline{\eta}}}\overline{\bu} \cdot\mathscr{F}^{\overline\eta}(D^{\mathscr{K}}_{\mathscr N}\overline{\eta}(t))\dx
+
 \int_{\Gamma}   \partial_t\overline{\eta} (D^{\mathscr{K}}_{\mathscr N}\overline{\eta}(t))\dy
 \\&
 -
 \int_0^t \int_{\Gamma}   \partial_t\overline{\eta} \partial_t (D^{\mathscr{K}}_{\mathscr N}\overline{\eta})\dy\ds
-
\int_0^t
\int_{\mathcal{O}_{\overline{\eta}}}  \overline\bu  \cdot  \partial_t(\mathscr{F}^{\overline{\eta}}(D^{\mathscr{K}}_{\mathscr N}\overline{\eta})) 
\dx\ds\nonumber
\\&\nonumber
-
\int_0^t
\int_{\mathcal{O}_{\overline{\eta}}}  
  \overline{\bu}\otimes \overline{\bu} :\nabx \mathscr{F}^{\overline\eta}(D^{\mathscr{K}}_{\mathscr N}\overline{\eta})) 
 \dx\ds
 +
 \int_0^t
 \int_{\mathcal{O}_{\overline{\eta}}} 
 \nabx\overline{\bu}:\nabx\mathscr{F}^{\overline\eta}(D^{\mathscr{K}}_{\mathscr N}\overline{\eta})
 \dx\ds
 \\&-
 \int_0^t
  \int_{\mathcal{O}_{\overline{\eta}}} 
 \bff\cdot\mathscr{F}^{\overline\eta}(D^{\mathscr{K}}_{\mathscr N}\overline{\eta})
 \dx\ds
 -
 \int_0^t
  \int_{\Gamma} 
g\,D^{\mathscr{K}}_{\mathscr N}\overline{\eta}\dy\ds
\nonumber
 \\
=:&I_0+I_1+\ldots+I_{8}.\label{galerkinweak1A}
\end{align}
First of all, 
we note that
\begin{align*}
\int_{\Gamma}&
\big[\partial_y^2 ((-\partial_y^2)^{1/2}\mathscr P_{\mathscr N}\overline{\eta} )  \partial_y^2\overline{\eta} 
 +
\gamma \partial_t\partial_y\overline{\eta}\, \partial_y((-\partial_y^2)^{1/2}\mathscr P_{\mathscr N}\overline{\eta}) \big]\dy 
\\
&=\int_{\Gamma}
\partial_y^2 ((-\partial_y^2)^{1/4}\mathscr P_{\mathscr N}\overline{\eta} ) (-\partial_y^2)^{1/4}\partial_y^2\mathscr P_{\mathscr N}\overline{\eta}  \dy 
\\ &+\gamma
\int_\Gamma\partial_y((-\partial_y^2)^{1/4}\mathscr P_{\mathscr N}\overline{\eta})\partial_t\big(\partial_y(-\partial_y^2)^{1/4}\mathscr P_{\mathscr N}\overline{\eta} \big)\dy 
\\
&= 
 \int_{\Gamma}
 \vert(-\partial_y^2)^{5/4}\mathscr P_{\mathscr N}\overline\eta \vert^2 \dy 
+
\frac{\gamma}{2}
 \frac{\dd}{\dt}
 \int_{\Gamma}
 \vert(-\partial_y^2)^{3/4}  \mathscr P_{\mathscr N}\overline{\eta} \vert^2 \dy 
\end{align*}
and since the corrector $\mathscr{K}_{\overline\eta} $ is spatially independent,
\begin{align*}
\int_{\Gamma}
\partial_y^2 \mathscr{K}_{\overline\eta} ((-\partial_y^2)^{1/2}\mathscr P_{\mathscr N}\overline{\eta} ) \partial_y^2\overline{\eta}  \dy 
 =
0.
\end{align*}
Furthermore, we have
\begin{align*}
  I_1  
&\lesssim \sup_I\Vert  \overline{\bu}      \Vert_{L^{2}(\mathcal{O}_{\overline{\eta}})}
\sup_I \Vert
(-\partial_y^2)^{1/2}\mathscr P_{\mathscr N}\overline{\eta} 
\Vert_{L^{2}(\Gamma)}
\\
&\lesssim \sup_I\Vert \overline{\bu} \Vert_{L^{2}(\mathcal{O}_{\overline{\eta}})}^{2}
+
\sup_I \Vert
\overline{\eta}
\Vert_{W^{1,2}(\Gamma)}^{2}
\end{align*}
by Proposition \ref{prop:musc}, where the right-hand side is bounded by \eqref{energyGal} and Proposition  \ref{prop:skorokhod}.
Similarly,
\begin{align*}
  I_2 
&\lesssim 
\sup_I\Vert\partial_t \overline{\eta}  \Vert_{L^{2}(\Gamma)}
\sup_I \Vert
(-\partial_y^2)^{1/2}\mathscr P_{\mathscr N}\overline{\eta} 
\Vert_{L^{2}(\Gamma)}
\\
&\lesssim 
\sup_I\Vert \partial_t\overline{\eta}  \Vert_{L^{2}(\Gamma)}^{2}
+
\sup_I \Vert
\overline{\eta}
\Vert_{W^{1,2}(\Gamma)}^{2}
\end{align*}
which is again bounded.
Since $\partial_t\overline\eta$ has mean value zero and $ \mathscr{K}_{\eta}$ maps to spatially independent functions we have $$\int_\Gamma\partial_t\overline{\eta}\,\partial_t\mathscr{K}_{\overline\eta}(
(-\partial_y^2)^{1/2}\mathscr P_{\mathscr N}\overline{\eta} )\dy=0.$$ Thus it holds
\begin{align*}
 -I_3=\int_0^t\int_\Gamma| \partial_t(-\partial_y^2)^{1/4}\mathscr P_{\mathscr N}\overline{\eta}|^2\dy\ds
 \lesssim
 \int_I \| \partial_t\overline{\eta}\|_{W^{1,2}(\Gamma)}^2\dt,
\end{align*}
which is bounded in  by
\eqref{eq:higher}.
We have by Proposition \ref{prop:musc} that
\begin{align*}
I_4 
&\lesssim
\int_I\|\partial_t\overline\eta(-\partial_y^2)^{1/2}\mathscr P_{\mathscr N}  \overline\eta\|_{L^{2}(\Gamma)}
\Vert \overline{\bu} \Vert_{L^{2}(\mathcal{O}_{\overline{\eta}})}
\dt
\\&
+
\int_I\|\partial_t(-\partial_y^2)^{1/2}\mathscr P_{\mathscr N}  \overline\eta\|_{L^{2}(\Gamma)}
\Vert \overline{\bu} \Vert_{L^{2}(\mathcal{O}_{\overline{\eta}})}
\dt
\\
&\lesssim
\int_I\|\partial_t\overline\eta\|^2_{L^2(\Gamma)} \|\partial_y\overline\eta\|_{L^{\infty}(\Gamma)}^2\dt+\int_I
\Vert \overline{\bu} \Vert_{L^{2}(\mathcal{O}_{\overline{\eta}})}^2
\dt
\\&
+
\int_I\|\partial_t  \overline\eta\|_{W^{1,2}(\Gamma)}
\dt,
\end{align*}
which can be controlled by
\eqref{energyGal}, Proposition  \ref{prop:skorokhod} and  \eqref{eq:higher}. 
Next, it follows from Proposition \ref{prop:musc} that
 (for $\delta>0$ arbitrary),
\begin{align*}
 I_5+I_6 
&\leq\delta
\int_I
\Vert\mathscr{F}^{\overline{\eta}}(D^{\mathscr{K}}_{\mathscr N}\overline{\eta} ) \Vert_{W^{1,2}(\mathcal{O}_{\overline{\eta}})}
^2\dt+c(\delta)\int_I\big(\Vert \overline{\bu} \Vert_{L^{4}(\mathcal{O}_{\overline\eta})}
^4+\Vert \overline{\bu} \Vert^2_{W^{1,2}(\mathcal{O}_{\overline{\eta}})}
\big)\dt
\\
&\lesssim
\delta\int_I 
 \Vert(-\partial_y^2)^{1/2}\mathscr P_{\mathscr N}  \overline\eta\Vert_{W^{1,2}(\Gamma)}^2\dt
+c(\delta)\\
&\lesssim
\delta\int_I 
 \Vert(-\partial_y^2)^{1/4}\mathscr P_{\mathscr N}  \overline\eta\Vert_{W^{2,2}(\Gamma)}^2\dt
+c(\delta),
\end{align*}
using also \eqref{energyGal} and Proposition  \ref{prop:skorokhod}. The forcing terms $I_7$ and $I_8$ are easily bounded thus if we combine everything and choose $\delta$ small enough, we obtain  
\begin{align}
\int_I\Vert \mathscr P_{\mathscr N}(-\partial_y^2)^{1/4}\overline{\eta} \Vert_{W^{2,2}(\Gamma )}^2\dt
\lesssim 1\label{GalerkinConvEta1NpS}
\end{align}
uniformly in $\mathscr N$
and thus \eqref{eq:13.05B}.
Combining this with \eqref{energyGal} and using a parabolic interpolation implies
\begin{align}\label{eq:13.05B'}
\overline{\eta}\in L^4(I;W^{9/4,2}(\Gamma))\hookrightarrow L^4(I;W^{2,4}(\Gamma)).
\end{align}

\subsection{The energy balance}
We rewrite \eqref{eq.28.09B} as an equation
for $(\partial_t\widetilde\eta,\widetilde\bv)$, where $\widetilde\eta:=\int_0^t\iota_{\overline{\eta}}\partial_t\overline\eta\,\dd s$ and $$\widetilde\bv:=\nabx\bfPsi_{\overline{\eta}}^\top(|\mathrm{det}(\nabx \bfPsi_{\overline{\eta}})|^{-1}\nabx\bfPsi_{\overline{\eta}})\widetilde\bu$$ with $\widetilde\bu:=\mathcal J_{\overline{\eta}}^{-1}\overline{\bu}$ which reads as 
\begin{equation}
\begin{aligned}
\label{weakeps'}
\frac{\dd}{\dt}  \bigg\langle\begin{pmatrix}\partial_t\widetilde\eta\\\widetilde\bv
\end{pmatrix}  
\cdot \begin{pmatrix} \phi
\\
\bm{\phi}
\end{pmatrix}
\bigg\rangle_{\mathbb H}
&=
\mathscr L  
\begin{pmatrix} 
\phi \\
\bm{\phi}
\end{pmatrix} 
\\
&:=\int_{\Gamma} \big(\partial_t\overline\eta \, \partial_t(\iota_{\overline\eta})\phi- \gamma\partial_t\partial_y\overline{\eta}\partial_y \iota_{\overline{\eta}} \phi
 -
 \partial_y^2\overline{\eta}\partial_y^2 \iota_{\overline{\eta}} \phi
 +g\,\iota_{\overline{\eta}} \phi
\big)\dy
\\
&+
 \int_{\mathcal{O}_{\overline{\eta} (t)}}\Big(  \overline\bu\cdot \partial_t (\mathcal{J}_{\overline{\eta} (t)}) \bm{\phi} 
+\overline\bfu\otimes\overline\bu:\nabx (\mathcal{J}_{\overline{\eta}(t)}\bfphi )  \Big) \dx
\\
&-
 \int_{\mathcal{O}_{\overline{\eta} (t)}}\big(\nabx \overline{\bu}:\nabx (\mathcal{J}_{\overline{\eta} (t)}\bm{\phi})
 -
 \bff\cdot\mathcal{J}_{\overline{\eta} (t)}\bm{\phi}\big)  \dx,  
\end{aligned}
\end{equation}
where 
\begin{align*}
\mathbb H:=\big\{(\xi,\mathbf{w})\in W^{2,2}(\Gamma)\times W^{1,2}_{\divx}(\mathcal O):\,\mathbf{w} \circ\bm{\varphi}=\xi\bn\quad\text{on}\quad \Gamma \big\}.
\end{align*}
Note that $\mathbb H$ is a Hilbert space (as closed subset of the Hilbert space $W^{2,2}(\Gamma)\times W^{1,2}_{\divx}(\mathcal O)$) and $\mathscr L\in\mathbb H'$. This crucially hinges on \eqref{eq:13.05B'} which is just critical for
\begin{align*}
\mathcal J_\eta:L^\infty(I;L^2(\mathcal O))\cap L^2 (I;W^{1,2}(\mathcal O))\rightarrow L^\infty(I;L^2(\mathcal O_\eta))\cap L^2 (I;W^{1,2}(\mathcal O_\eta)).
\end{align*}
Hence we get from the fundamental theorem of calculus
applied to the mapping
\begin{align*}
t\mapsto \tfrac{1}{2}\int_{\mathcal O}\widetilde\bv\cdot
|\mathrm{det}(\nabx \bfPsi_{\overline{\eta}})|(\nabx\bfPsi_{\overline{\eta}}^\top
\nabx\bfPsi_{\overline{\eta}} )^{-1}
\widetilde\bv\dx
+
\tfrac{1}{2}\int_\Gamma\Big|\tfrac{\partial_t\widetilde\eta}{\iota_{\overline{\eta}}}\Big|^2\,\dy,
\end{align*}
noticing that $|\mathrm{det}(\nabx \bfPsi_{\overline{\eta}})|(\nabx\bfPsi_{\overline{\eta}}^\top
\nabx\bfPsi_{\overline{\eta}} )^{-1} 
\widetilde\bv=\widetilde\bu$,
\begin{align*}
\tfrac{1}{2}\int_{\mathcal O_{\overline{\eta}(t)}}|\overline\bu|^2\dx
&+\tfrac{1}{2}\int_\Gamma|\partial_t\overline\eta|^2\dy
\\&=
\tfrac{1}{2}\int_{\mathcal O}\widetilde\bv\cdot
|\mathrm{det}(\nabx \bfPsi_{\overline{\eta}})|(\nabx\bfPsi_{\overline{\eta}}^\top
\nabx\bfPsi_{\overline{\eta}} )^{-1}
\widetilde\bv\dx
+
\tfrac{1}{2}\int_\Gamma\Big|\tfrac{\partial_t\widetilde\eta}{\iota_{\overline{\eta}}}\Big|^2\dy
\\
&=
\tfrac{1}{2}\int_{\mathcal O}\widetilde\bv(0)\cdot\widetilde\bu(0)\dx+\tfrac{1}{2}\int_\Gamma|\eta_\star|^2\,\dy+\int_0^t\mathscr L\begin{pmatrix}\iota_{\overline{\eta}}^{-2}\partial_t\widetilde\eta\\\widetilde\bu
\end{pmatrix} \,\dd s
\\
&+\tfrac{1}{2}\int_{\mathcal O}\widetilde\bv\cdot\partial_t\big(|\mathrm{det}(\nabx \bfPsi_{\overline{\eta}})|(\nabx\bfPsi_{\overline{\eta}}^\top
\nabx\bfPsi_{\overline{\eta}} )^{-1}
\big)\widetilde\bv\dx-\int_\Gamma\tfrac{|\partial_t\widetilde\eta|^2}{\iota_{\overline{\eta}}^3}\partial_t\iota_{\overline{\eta}}\,\dy.
\end{align*}
Hence we obtain 
\begin{align*}
\tfrac{1}{2}\int_{\mathcal O_{\overline{\eta}(t)}}|\overline\bu|^2\dx+\tfrac{1}{2}\int_\Gamma|\partial_t\overline\eta|^2\dy
&=
\tfrac{1}{2}\int_{\mathcal O_{\overline{\eta}(0)}}|\bu_0|^2\dx+\tfrac{1}{2}\int_\Gamma|\eta_\star|^2\,\dy
\\
&-
\int_0^t
\int_{\Gamma} \partial_y^2 \iota_{ \overline\eta}\partial_t\widetilde\eta \,\partial_y^2\overline\eta\dy\ds
-
\gamma
\int_0^t\int_{\Gamma}\partial_y\iota_{ \overline\eta}\partial_t\widetilde\eta \,\partial_y\overline\eta\dy\ds
\\&
-\int_0^t\int_{\mathcal O_{\overline\eta}}\nabx \bu:\nabx (\mathcal{J}_{\overline\eta (t)}\widetilde\bu)  \dx\ds
\\&
+
 \int_0^t\int_\Gamma g \,\iota_{ \overline\eta}\partial_t\widetilde\eta\dy \ds
 +
  \int_0^t\int_{\mathcal{O}_{\overline{\eta}}} \bff\cdot \mathcal{J}_{\overline\eta (t)}\widetilde\bu\dx \ds .
\end{align*}
Noticing that $\overline\bu=\mathcal{J}_{\overline\eta (t)}\widetilde\bu$ and $\partial_y^2 (\iota_{\overline\eta}\partial_t\eta)=\partial_t\partial_y^2\widetilde\eta$ and $\partial_y (\iota_{\overline\eta}\partial_t\eta)=\partial_t\partial_y\widetilde\eta$
 we finally obtain
\begin{align*}
\tfrac{1}{2}\int_{\mathcal{O}_{\overline{\eta}}} & |\overline\bu(t)|^2\dx
+
\int_0^t\int_{\mathcal{O}_{\overline{\eta}}}|\nabx\overline\bu|^2\dx\ds
+
\int_\Gamma\Big(\tfrac{1}{2}|\partial_t\overline\eta(t)|^2
+
\tfrac{1}{2}
|\partial_y^2\eta(t)|^2\Big)\dy+\gamma\int_0^t\int_\Gamma|\partial_y\partial_t\overline\eta|^2\dy\ds
\\
&
= \tfrac{1}{2}\int_{\mathcal O_{\overline{\eta}(0)} }|{\bu}_0|^2\dx
+
\int_\Gamma\Big(\tfrac{1}{2}|\eta_\star|^2
+
\tfrac{1}{2}
|\partial_y^2\eta_0|^2\Big)\dy
+
 \int_0^t\int_\Gamma g \,\partial_t\overline\eta\dy \ds
 +
  \int_0^t\int_{\mathcal{O}_{\overline{\eta}}} \bff\cdot\overline{\bu}\dx \ds
\end{align*} 
for a.a. $t\in I$. Hence, by applying Young's inequality and Gr\"onwall lemma, we have  weak solution in the sense of Definition \ref{def:weakSolution} such that the weak-strong uniqueness result from \cite{BMSS35} applies.


\subsection{The Gy\"{o}ngy--Krylov argument}
\label{sec:cip}
In order to complete the proof of Theorem \ref{thm:main}, we make use of  \cite[Chapter 2, Theorem 2.10.3]{BFHbook} which is a generalisation of  the Gy\"{o}ngy--Krylov characterization of convergence in probability introduced in \cite{krylov} to quasi-Polish spaces. We consider two sequences $(N_n),(N_m)\subset \mathbb N$ diverging to infinity. Let
$${\boldsymbol{\vartheta}}^N:=\bigg[\mathbb I_{\mathcal O_{\eta^N}}\mathbf u^N,
\mathbb I_{\mathcal O_{\eta^N}} \nabx{\mathbf u}^N, \eta^N , 
\begin{pmatrix}
((\partial_t\eta^N,\mathscr F^{\eta^N}\partial_t\eta^N), (\partial_t\eta^N,\mathbb I_{\mathcal O_{\eta^N}}\mathbf u^N))
\\
((\partial_t \eta^N, \mathbf u
^N-\mathscr F^{\eta^N} \partial_t\eta^N), (\partial_t\eta^N,\mathbb I_{\mathcal O_{\eta^N}} \mathbf u^N))\end{pmatrix}\bigg],\quad N\in\mathbb N,$$
and set
$${\boldsymbol{\vartheta}}^n:={\boldsymbol{\vartheta}}^{N_n},\quad{\boldsymbol{\vartheta}}^m:={\boldsymbol{\vartheta}}^{N_m}.$$
We also set
$\bu^n:=\bu^{N_n}$, $\eta^n:=\eta^{N_n}$ and $\bu^m:=\bu^{N_m},\eta^m:=\eta^{N_m}$.
We consider the collection of joint laws of $({\boldsymbol{\vartheta}}^n,{\boldsymbol{\vartheta}}^m, (W^k_t)_{k\in\mathbb Z})$
on the extended path space
$$\chi_{\tt ext}:=\big(\chi_\bu\times\chi_{\nabx\bu}\times\chi_\eta\times\chi_{f,g}^2\big)^2\times \chi_W.$$ 
Similarly to Lemma \ref{lemma:fg'} we obtain tightness of 
$$\{\mathcal L[ {\boldsymbol{\vartheta}}^n,{\boldsymbol{\vartheta}}^m,(W^k_t)_{k\in\mathbb Z}];\,n,m\in\mathbb N\}$$
on $\chi_{\tt ext}$.

Let us take any subsequence $({\boldsymbol{\vartheta}}^{n_k},{\boldsymbol{\vartheta}}^{m_k}, W^k_t)$. By the Jakubowski-Skorokhod representation theorem we infer for a non-relabelled subsequence the existence of a probability space $(\bar{\Omega},\bar{\mathfrak F},\bar{\mathbb P})$ with a sequence of random variables  $(\hat{\boldsymbol{\vartheta}}^{n_k},\check{\boldsymbol{\vartheta}}^{m_k}, \overline{W}^k_t )$
converging almost surely in $\chi_{\tt ext}$ to a random variable $(\hat{\boldsymbol{\vartheta}},\check{\boldsymbol{\vartheta}}, (\overline{W}^k_t)_{k\in\mathbb Z})$.
Moreover,
$$\mathcal L[\hat{\boldsymbol{\vartheta}}^{n_k},\check{\boldsymbol{\vartheta}}^{m_k}, (\overline W_t^{k})_{k\in\mathbb Z}]=\mathcal L[{\boldsymbol{\vartheta}}^{n_k},{\boldsymbol{\vartheta}}^{m_k},(W_t^k)_{k\in\mathbb Z}]$$
on $\chi_{\tt ext}$ for all $k\in\mathbb N$.
%
Observe that, in particular, $\mathcal L[\hat{\boldsymbol{\vartheta}}^{n_k},\check{\boldsymbol{\vartheta}}^{m_k}, (\overline W_t^{k})_{k\in\mathbb Z}]$ converges weakly to the measure $\mathcal L[\hat{\boldsymbol{\vartheta}},\check{\boldsymbol{\vartheta}}, \overline (W_t^k)_{k\in\mathbb Z}]$.
As in the previous subsection we can
show that the limit objects
are (pathwise) weak solutions to \eqref{detNSF} in the sense of Definition \eqref{def:weakSolution}.
By the weak-strong uniqueness result from \cite{BMSS35} they must coincide with the strong solution (which exists by Theorem \ref{thm:Br}) and thus with each other. 
Therefore, the solutions $(\hat\eta,\hat\bu)$ and $(\check\eta,\check\bu)$ coincide $\bar{\mathbb P}$-a.s. and we have
\begin{align*}
\mathcal L&[\hat{\boldsymbol{\vartheta}},\check{\boldsymbol{\vartheta}}, (\overline W^k_t)_{k\in\mathbb Z}]\Big(({\boldsymbol{\vartheta}}_1,{\boldsymbol{\vartheta}}_2,(W^k_t)_{k\in\mathbb Z})\in\chi_{\tt ext}:\;(\eta_1,\bu_1)=(\eta_2,\bu_2)\Big)
=\bar{\mathbb P}((\hat\eta,\hat\bu)=(\check\eta,\check\bu))=1.
\end{align*}
Now, we have all in hand to apply the Gy\"ongy--Krylov theorem from \cite[Chapter 2, Theorem 2.10.3]{BFHbook}. 
It implies that the original sequence ${\boldsymbol{\vartheta}}^N$ defined on the initial probability space $(\Omega,\mathfrak F,\mathbb P)$ converges in probability in the topology of $\chi_\bu\times\chi_{\nabx\bu}\times\chi_\eta\times\chi_{f,g}^2$ to the random variable
$${\boldsymbol{\vartheta}}:=\bigg[\mathbb I_{\mathcal O_{\eta }} \mathbf u,
\mathbb I_{\mathcal O_{\eta }}\nabx{\mathbf u}, \eta,
\begin{pmatrix}
((\partial_t\eta,\mathscr F^{\eta}\partial_t\eta), (\partial_t\eta,\mathbb I_{\mathcal O_{\eta}}\mathbf u))
\\
((0, \mathbf u -\mathscr F^{\eta}\partial_t\eta), (\partial_t\eta,\mathbb I_{\mathcal O_{\eta }} \mathbf u))\end{pmatrix}\bigg].$$
In particular, we deduce that 
\begin{align}\label{eq:28.09}
\eta^N
\rightarrow
\eta \quad
\text{ in } \quad L^\infty(I, L^\infty(\Gamma)) \quad  \text{in probability}.
\end{align}

\subsection{No contact}\label{sec:no}
We observe that by using the Fundamental Theorem of Calculus, one-dimensional Agmon's inequality in space, and the Cauchy--Schwarz inequality in time, it follows that
\begin{align*}
\Vert\overline{\eta}(t)-\eta_0 \Vert_{L^\infty(\Gamma)} 
\leq \,\overline c
\int _{0}^{t}\Vert \partial _{t}\naby\overline{\eta}\Vert_{L^{2}(\Gamma)} \ds
\leq\,\overline c
\sqrt{t}\bigg(
\int _I\Vert \partial _{t}\naby\overline{\eta}\Vert_{L^{2}(\Gamma)}^2\dt\bigg)^{1/2} 
\end{align*}
for some positive constant $\overline c$.
Therefore, if we choose 
\begin{align*}
\gamma \geq \frac{\overline c^2T\big( \Vert \eta_\star \Vert_{L^2(\Gamma)}^2+  \Vert \naby^2\eta_0 \Vert_{L^2(\Gamma)}^2
+
\Vert \bu_0 \Vert_{L^2(\mathcal{O}_{\eta_0})}^2+\int_I\Vert g \Vert_{L^2(\Gamma)}^2\dt
+
\int_I\Vert \bff \Vert_{L^2(\mathcal{O}_{\overline{\eta}})}^2\dt
\big)}{(\tfrac{1}{2}(L-\alpha))^2},
\end{align*}
we obtain for $\Vert \eta_0\Vert_{L^\infty(\Gamma)} \leq \alpha <L$   that
\begin{equation}
\begin{aligned}
\Vert \overline{\eta} \Vert_{L^\infty(I\times\Gamma)} 
&\leq
\Vert \overline{\eta} -\eta_0\Vert_{L^\infty(I\times\Gamma)} 
+
\Vert \eta_0\Vert_{L^\infty(\Gamma)}  
\\
&\leq \overline c
\sqrt{\frac{T}{\gamma}}\sqrt{\gamma}
\Vert \partial_t\naby\overline{\eta} \Vert_{L^2(I;L^{2}(\Gamma))} 
+
\alpha
\\
&\leq \overline c
\sqrt{\frac{T}{\gamma}}  \big( \Vert \eta_\star \Vert_{L^2(\Gamma)}^2+  \Vert \naby^2\eta_0 \Vert_{L^2(\Gamma)}^2
+
\Vert \bu_0 \Vert_{L^2(\mathcal{O}_{\eta_0})}^2+1\big)^{1/2}    
+
\alpha
\\
&\leq \tfrac{L+\alpha}{2}.
\label{lAlp}
\end{aligned}
\end{equation} 
By using this estimate above, we note that by assuming $\Vert \overline{\eta}^N\Vert_{L^\infty(I\times\Gamma)}\geq L$, then
\begin{align*}
L\leq \Vert \overline{\eta}^N\Vert_{L^\infty(I\times\Gamma)}
\leq \Vert \overline{\eta}^N- \overline{\eta}\Vert_{L^\infty(I\times\Gamma)}+\tfrac{L+\alpha}{2}
\end{align*}
which implies that
\begin{align*}
\Vert \overline{\eta}^N- \overline{\eta}\Vert_{L^\infty(I\times\Gamma)}\geq \tfrac{L-\alpha}{2}.
\end{align*}
Consequently, the following containment hold
\begin{align*}
\big\{ \Vert \overline{\eta}^N \Vert_{L^\infty(I\times\Gamma)}\geq L\big\}
\subseteq
\big\{ \Vert \overline{\eta}^N- \overline{\eta}\Vert_{L^\infty(I\times\Gamma)}\geq \tfrac{L-\alpha}{2}\big\}
\end{align*}
when \eqref{lAlp} is true.
Thus, by monotonicity, we obtain
\begin{align*}
\overline{\mathbb{P}}\big(\Vert \overline{\eta}^N\Vert_{L^\infty(I\times\Gamma)}<L\big)
&=
1-\overline{\mathbb{P}}\big(\Vert \overline{\eta}^N\Vert_{L^\infty(I\times\Gamma)}\geq L\big)
\\&\geq
1-
\overline{\mathbb{P}}\big(\Vert \overline{\eta}^N-\overline\eta\Vert_{L^\infty(I\times\Gamma)}\geq \tfrac{L-\alpha}{2}\big).
\end{align*} 
Hence, for any $\kappa>0$,  we obtain from \eqref{eq:28.09} that for $N=N(\kappa)$ sufficiently large
\begin{align*}
\overline{\mathbb{P}}\big(\Vert \overline{\eta}^N\Vert_{L^\infty(I\times\Gamma)}<L\big)
\geq
1-\kappa.
\end{align*} 
This completes the proof of Theorem \ref{thm:main}.

%
%
 

\section*{Statements and Declarations} 
\subsection*{Funding}
This work has been partly supported by Grant number 543675748  by the German Research
Foundation (DFG).
\subsection*{Author Contribution}
The authors wrote and reviewed the manuscript.

\subsection*{Data Availability Statement}
Data sharing is not applicable to this article as no datasets were generated
or analyzed during the current study.
\subsection*{Competing Interests}
The authors have no competing interests to declare that are relevant to the content of this article.

\section*{Compliance with Ethical Standards}\label{conflicts}
\smallskip
\par\noindent
{\bf Conflict of Interest}. The authors declares that they have no conflict of interest.

\smallskip
\par\noindent
{\bf Data Availability}. Data sharing is not applicable to this article as no datasets were generated or analysed during the current study.

%
%
%

\begin{thebibliography}{10}
\providecommand{\url}[1]{{#1}}
\providecommand{\urlprefix}{URL }
\expandafter\ifx\csname urlstyle\endcsname\relax
  \providecommand{\doi}[1]{DOI~\discretionary{}{}{}#1}\else
  \providecommand{\doi}{DOI~\discretionary{}{}{}\begingroup
  \urlstyle{rm}\Url}\fi

\bibitem{agresti1}Agresti, A.: Global smooth solutions by transport noise of 3D Navier-Stokes equations with small hyperviscosity
To appear in Annals of Probability

\bibitem{agresti2} Agresti, A.: On the absence of blow-up in the 3D Navier-Stokes equations with transport noise. arxiv:2607.15140v4

\bibitem{fluid2014bodnar}
Bodn\'{a}r, T., Galdi, G.P., Ne\v{c}asov\'{a}, v. (eds.): Fluid-structure
  interaction and biomedical applications.
\newblock Advances in Mathematical Fluid Mechanics. Birkh\"{a}user/Springer,
  Basel (2014)

\bibitem{breit2022regularity}
Breit, D.: Regularity results in 2D fluid-structure interaction.
\newblock Math. Ann.   388, 1495--1538. (2024)
\newblock \doi{10.1007/s00208-022-02548-9}.


\bibitem{BFHbook}
Breit, D., Feireisl, E., Hofmanov\'{a}, M.: Stochastically forced compressible
  fluid flows, \emph{De Gruyter Series in Applied and Numerical Mathematics},
  vol.~3.
\newblock De Gruyter, Berlin (2018)

\bibitem{breit2021incompressible}
Breit, D., Mensah, P.R.: An incompressible polymer fluid interacting with a
  {K}oiter shell.
\newblock J. Nonlinear Sci. \textbf{31}(1), Paper No. 25, 56 (2021)

\bibitem{BrMeMo} Breit, D., Mensah  P. R.,  Moyo,  T. C.: Martingale solutions in stochastic fluid-structure interaction. J. Nonlinear Sci. \textbf{34}, Paper No 34. (2024)

\bibitem{BMSS35} Breit, D., Mensah,   P. R., Schwarzacher S., Su, P.: \emph{Ladyzhenskaya--Prodi--Serrin condition for fluid-structure interaction systems.} Ann. Sc. Norm. Super. Pisa Cl. Sci. (5), DOI:	10.2422/2036-2145.202407\textunderscore010

\bibitem{BMSS} Breit, D., Mensah,   P. R., Schwarzacher S., Su, P.: Well-posedness theorems in fluid-structure interaction: perfectly elastic shells. Preprint at arXiv:2605.14357

\bibitem{BrRo} Breit, D., Roy, A.: Compressible fluids and elastic plates in 2D: A conditional no-contact theorem. Phys. D \textbf{483}, 134967. (2025)


\bibitem{breitSchw2018compressible}
Breit, D., Schwarzacher, S.: Compressible fluids interacting with a
  linear-elastic shell.
\newblock Arch. Ration. Mech. Anal. \textbf{228}(2), 495--562 (2018)
%
%
%
%
%
%
%
%
%
%
%


\bibitem{Fl0}
Flandoli, F., Gubinelli, M., Priola, E.: Well-posedness of the transport
  equation by stochastic perturbation.
\newblock Invent. Math. \textbf{180}(1), 1--53 (2010)


\bibitem{flandoli2021scaling}
Flandoli, F., Galeati, L.,  Luo, D.: Scaling limit of stochastic 2D Euler equations with transport noises to the deterministic Navier?Stokes equations. Journal of Evolution Equations, 21(1), 567-600 (2021).

\bibitem{flandoli2020convergence}
Flandoli, F.,  Luo, D.: Convergence of transport noise to Ornstein--Uhlenbeck for 2D Euler equations under the enstrophy measure (2020)

\bibitem{Fl3}
Flandoli, F., Luo, D.: High mode transport noise improves vorticity blow-up
  control in 3{D} {N}avier-{S}tokes equations.
\newblock Probab. Theory Related Fields \textbf{180}(1-2), 309--363 (2021)

%


\bibitem{MR1873203}
Formaggia, L., Gerbeau, J.F., Nobile, F., Quarteroni, A.: On the coupling of
  3{D} and 1{D} {N}avier-{S}tokes equations for flow problems in compliant
  vessels.
\newblock Comput. Methods Appl. Mech. Engrg. \textbf{191}(6-7), 561--582
  (2001)
  

\bibitem{MR2808162}
Galdi, G.P.: An introduction to the mathematical theory of the
  {N}avier-{S}tokes equations, second edn.
\newblock Springer Monographs in Mathematics. Springer, New York (2011)

\bibitem{galeati2020convergence}
Galeati, L: On the convergence of stochastic transport equations to a deterministic parabolic one. Stochastics and Partial Differential Equations: Analysis and Computations, 8(4), 833-868 (2020).
%


\bibitem{MR3466847}
Grandmont, C., Hillairet, M.: Existence of global strong solutions to a
  beam-fluid interaction system.
\newblock Arch. Ration. Mech. Anal. \textbf{220}(3), 1283--1333 (2016)


\bibitem{krylov}
Gy\"{o}ngy, I., Krylov, N.: Existence of strong solutions for {I}t\^{o}'s
  stochastic equations via approximations.
\newblock Probab. Theory Related Fields \textbf{105}(2), 143--158 (1996)

%


\bibitem{Holm3}
Holm, D.D.: Variational principles for stochastic fluid dynamics.
\newblock Proc. A. \textbf{471}(2176), 20140,963, 19 (2015)

%


\bibitem{jakubow}
Jakubowski, A.: Short communication: The almost sure skorokhod representation
  for subsequences in nonmetric spaces.
\newblock Theory Probab. Appl. \textbf{42}(1), 167--174 (1998)

\bibitem{KuCa1}
Kuan, J., \v{C}ani\'{c}, S.: A stochastically perturbed fluid-structure
  interaction problem modeled by a stochastic viscous wave equation.
\newblock J. Differential Equations \textbf{310}, 45--98 (2022)

\bibitem{KuCa2}
Kuan, J., \v{C}ani\'{c}, S.: Well-posedness of solutions to stochastic fluid-structure interaction.
Journal of Mathematical Fluid Mechanics 26(1) 4 (2024)

\bibitem{KuTa1}
Kuan, J.,  Tawri, K. . Existence of weak martingale solutions to a stochastic fluid-structure interaction problem with a compressible viscous fluid. Journal of Differential Equations, 449, 113669 (2025).


\bibitem{lengeler2014weak}
Lengeler, D., R\r{u}\v{z}i\v{c}ka, M.: Weak solutions for an incompressible
  {N}ewtonian fluid interacting with a {K}oiter type shell.
\newblock Arch. Ration. Mech. Anal. \textbf{211}(1), 205--255 (2014)

\bibitem{MeNg} Mensah, P.R., Ngougoue Ngougoue,  P. M.: Stochastically-constrained Koiter shell models. Stoch. Anal. Appl. (to appear)


\bibitem{muha2013existence}
Muha, B., Cani\'{c}, S.: Existence of a weak solution to a nonlinear
  fluid-structure interaction problem modeling the flow of an incompressible,
  viscous fluid in a cylinder with deformable walls.
\newblock Arch. Ration. Mech. Anal. \textbf{207}(3), 919--968 (2013)


\bibitem{MuSc}
Muha, B., Schwarzacher, S.: Existence and regularity for weak solutions for a
  fluid interacting with a non-linear shell in 3d.
\newblock Ann. Inst. H. Poincar\'e C Anal. Non Lin\'eaire \textbf{39},
  1369--1412 (2022)

\bibitem{on2}
Ondrej\'{a}t, M.: Stochastic nonlinear wave equations in local {S}obolev
  spaces.
\newblock Electron. J. Probab. \textbf{15}, no. 33, 1041--1091 (2010).

\bibitem{papadopoulos2009buckling}
Papadopoulos, V., Stefanou, G., Papadrakakis, M.: Buckling analysis of imperfect shells with stochastic non-Gaussian material and thickness properties. International Journal of Solids and Structures, 46(14-15), 2800-2 (2009)


\bibitem{reuter2025probabilistic}
Reuter, N.,  Kriegesmann, B.: Probabilistic analysis of cylindrical shells under continuously varying load combinations. Thin-walled structures, 215, 113319 (2025)

\bibitem{schafer2006stochastic}
Schafer, B. W., Graham-Brady, L.: Stochastic post-buckling of frames using Koiter's method. International Journal of Structural Stability and Dynamics, 6(03), 333-35 (2006)

\bibitem{SaaSch21}
Saari, O.,  Schwarzacher, S.: Construction of a right inverse for the divergence in non-cylindrical time dependent domains.
\newblock {\em Ann. PDE}, 9(1), 1--52 (2023)

\bibitem{ScSu}
Schwarzacher, S., Su, P.: Regularity for an elastic beam interacting with 2d
  navier-stokes equations.
\newblock Comm. Math. Phys. (2025)

\bibitem{Ta1}
Tawri, K.: A 2D stochastic nonlinearly coupled fluid-structure interaction problem in compliant arteries with unrestricted structural displacement. Journal of Differential Equations, 431, 113243  (2025).


\bibitem{KuCa3}
Tawri, K., \v{C}ani\'{c}, S.: Existence of martingale solutions to a
  nonlinearly coupled stochastic fluid-structure interaction problem.
\newblock Comm. PDE, 50(3), 353–406.  (2025)

\end{thebibliography}

\end{document}